\newcommand{\invl}{\llbracket\hspace{-.12cm} \llbracket}
\newcommand{\invr}{\rrbracket\hspace{-.12cm} \rrbracket}
\newcommand{\lpp}{(\!(}
\newcommand{\rpp}{)\!)}
\newcommand{\vars}{\invl x_1,x_2,\dots\invr}
\title{Big polynomial rings with imperfect coefficient fields}
\author{Daniel Erman}
\address{Department of Mathematics, University of Wisconsin, Madison, WI}
\email{\href{mailto:derman@math.wisc.edu}{derman@math.wisc.edu}}
\urladdr{\url{http://math.wisc.edu/~derman/}}
\author{Steven V Sam}
\address{Department of Mathematics, University of Wisconsin, Madison, WI}
\curraddr{Department of Mathematics, University of California, San Diego, La Jolla, CA}
\email{\href{mailto:ssam@ucsd.edu}{ssam@ucsd.edu}}
\urladdr{\url{http://math.ucsd.edu/~ssam/}}
\author{Andrew Snowden}
\address{Department of Mathematics, University of Michigan, Ann Arbor, MI}
\email{\href{mailto:asnowden@umich.edu}{asnowden@umich.edu}}
\urladdr{\url{http://www-personal.umich.edu/~asnowden/}}
\thanks{DE was partially supported by NSF DMS-1601619. SS was partially supported by NSF DMS-1500069. AS was partially supported by NSF DMS-1453893.}
\date{May 6, 2020}
\begin{document}

\maketitle

\begin{abstract}
We previously showed that the inverse limit of standard-graded polynomial rings with perfect (or semi-perfect) coefficient field is a polynomial ring, in an uncountable number of variables. In this paper, we show that the result holds with no hypothesis on the coefficient field.  We also prove an analogous result for ultraproducts of polynomial rings.
\end{abstract}

\section{Introduction}

\subsection{Statement of results}

Let $\bk$ be a field and let $\bR$ be the inverse limit of the standard-graded polynomial rings $\bk[x_1, \ldots, x_n]$ in the category of graded rings; thus $\bR$ is a graded ring, and a degree $d$ element of $\bR$ is a formal, perhaps infinite, $\bk$-linear combination of degree $d$ monomials in the variables $\{x_i\}_{i \ge 1}$. Recall that $\bk$ is {\bf perfect} if it has characteristic~0, or if it has positive characteristic~$p$ and $\bk^p=\bk$. We say that $\bk$ is {\bf semi-perfect} if it has characteristic~0, or if it has characteristic~$p$ and $[\bk:\bk^p]<\infty$. In \cite{stillman}, we proved that $\bR$ is (isomorphic to) a polynomial ring (in uncountably many variables) when the field $\bk$ is semi-perfect, and we demonstrated the utility of this result by using it to give a new proof of Stillman's conjecture; it has also been used by \cite{draisma-lason-leykin} to prove a finiteness theorem for Gr\"obner bases. In this paper, we improve our result by eliminating the hypothesis:

\begin{theorem} \label{mainthm}
For any field $\bk$, the ring $\bR$ is a polynomial ring.
\end{theorem}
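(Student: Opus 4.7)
The characteristic zero case is already semi-perfect and hence covered by the theorem of \cite{stillman}, so we may assume $\bk$ has characteristic $p>0$ and $[\bk:\bk^p]=\infty$. The plan is to reduce to the semi-perfect case by exhibiting $\bk$ as a filtered union of semi-perfect subfields. Let $\{\bk_\alpha\}$ range over the finitely generated subfields of $\bk$ over the prime field $\mathbb{F}_p$: each such $\bk_\alpha$ has finite transcendence degree over $\mathbb{F}_p$, so $[\bk_\alpha:\bk_\alpha^p]<\infty$ and $\bk_\alpha$ is semi-perfect, and $\bk=\bigcup_\alpha \bk_\alpha$. Writing $\bR_\alpha$ for the big polynomial ring built from $\bk_\alpha$, each $\bR_\alpha$ is a polynomial ring by the previous theorem, and $\bR_\alpha$ embeds naturally into $\bR$.

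A naive ``union of polynomial generators'' argument fails, because the infinite product $\bR_d=\prod_\mu \bk$ (with $\mu$ ranging over degree $d$ monomials in the $x_i$) does not commute with the filtered union $\bk=\bigcup_\alpha \bk_\alpha$. Concretely, $\bR$ is strictly larger than $\bigcup_\alpha \bR_\alpha$: a typical element of $\bR_1$ has coefficients jointly involving infinitely many transcendentals over $\mathbb{F}_p$ and lies in no single $\bR_\alpha$. The proof must therefore augment the polynomial generators supplied by the semi-perfect theorem with further generators encoding the $p$-basis structure of $\bk$. One natural route is to fix a $p$-basis $T$ of $\bk$ and use the free decomposition $\bk=\bigoplus_\mathbf{a} \bk^p\, t^\mathbf{a}$ (with $\mathbf{a}$ a finitely supported function $T\to\{0,\dots,p-1\}$) to express $\bR$ as a suitable completion of a free module over the Frobenius subring $\bR^p$, which is itself naturally a big polynomial ring over $\bk^p$; one then iterates this decomposition and combines with the semi-perfect result at each finite level.

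The main obstacle is to reconcile the infinite-product structure of $\bR$ with the direct-sum decomposition coming from the $p$-basis: the equality $\bk=\bigoplus_\mathbf{a} \bk^p\, t^\mathbf{a}$ does not pass cleanly through the inverse limit defining $\bR$, and so the generators produced at each finite stage may assemble only to topological, rather than algebraic, generators of $\bR$ over $\bk$. The technical heart of the proof is to verify that some careful version of this construction produces a genuine polynomial ring structure, meaning every element of $\bR$ is a \emph{finite} $\bk$-polynomial in the constructed generators, and that these generators are algebraically independent over $\bk$.
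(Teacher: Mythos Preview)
Your proposal is not a proof: it is a plan that correctly identifies the central difficulty and then stops short of resolving it. You yourself flag the obstacle---the $p$-basis decomposition $\bk=\bigoplus_{\mathbf a}\bk^p t^{\mathbf a}$ is an (infinite) direct sum, while $\bR_d$ is an (infinite) product, and these do not commute---and then say only that ``some careful version of this construction'' must be checked. But the problematic element $f=\sum_{i\ge 1} t_i x_i^p$ (with the $t_i$ part of a $p$-basis) shows exactly why no version of this construction is available along the lines you suggest: $f$ lies in no $\bR_\alpha$, and its formal $p$-basis expansion over $\bR^p$ is the genuinely infinite sum $\sum_i t_i\cdot x_i^p$, so $f$ is not in the $\bk$-span of $\bR^p$. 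Iterating the Frobenius decomposition does not help, since at every stage you face the same phenomenon one level down. There is no finite-level truncation at which $f$ becomes a polynomial in previously constructed generators, and your sketch gives no mechanism for producing new generators that would absorb such elements.

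The paper takes a completely different route and does not attempt to reduce to the semi-perfect case. Instead it introduces an additional piece of structure on $\bR$, an \emph{$F$-factorization} $(\phi,\sigma)$ with $\phi(\sum c_e x^e)=\sum c_e^p x^e$ and $\sigma(\sum c_e x^e)=\sum c_e x^{pe}$, whose point is precisely that the ``problem elements'' such as $f$ above are exactly $\im(\sigma)$. The paper then proves an abstract criterion (Theorem~\ref{thm:poly}): a graded ring with an $F$-factorization is polynomial if and only if it carries an \emph{admissible} set of Hasse derivations, where admissibility requires only that $\partial^1$ detect elements outside $\im(\sigma)+\sum_j\epsilon_j\im(\phi)$, not outside the $\bk$-span of $R^p$. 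The continuous Hasse derivatives $\partial_i$ in the variables satisfy this weaker condition on $\bR$ for any $\bk$, and the theorem follows. In short, the paper sidesteps your obstacle not by a clever limit argument but by changing the target of the derivation criterion so that the obvious derivations already suffice.
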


As in \cite{stillman}, we prove an analogous result for an ultraproduct ring, and establish some related results; see \S \ref{s:app} for details.

\subsection{Motivation}

We offer three pieces of motivation for Theorem~\ref{mainthm}:

(a) Ananyan--Hochster \cite[Theorem B]{ananyan-hochster} proved the existence of ``small subalgebras'' over algebraically closed fields. Utilizing these algebras was one of the key insights in their proof of Stillman's conjecture.  In~\cite{stillman}, we extended the theorem of Ananyan--Hochster by establishing the existence of small subalgebras over perfect fields. Theorem ~\ref{mainthm} allows us to further strengthen this result to all fields. See \S \ref{ss:small} for details.

(b) Ananyan--Hochster introduced a notion of ``strength'' for polynomials that played a central role in their work, and has since featured prominently elsewhere (e.g., \cite{ananyan-hochster, bik-draisma-eggermont, draisma,stillman,hartshorne}). Since strength has proven to be such a useful concept, it is desirable to understand it better. There are some genuine differences as to how strength behaves when the coefficient field is not semi-perfect. For example, results of \cite{ananyan-hochster} imply that if $\bk$ is algebraically closed then an element $f \in \bR$ has infinite strength if and only if the ideal of partial derivatives $(\frac{\partial f}{\partial x_1}, \frac{\partial f}{\partial x_2},\dots)$ has infinite codimension. This is not true if $\bk$ fails to be semi-perfect: indeed, if $a_1,a_2,\dots\in \bk$ are linearly independent over $\bk^p$ then $f=\sum_{i=1}^\infty a_ix_i^p$ is an infinite strength element whose corresponding ideal of partial derivatives is the zero ideal. Theorem~\ref{mainthm} shows nonetheless that some of the pleasant features of strength in the semi-perfect case continue to hold for general fields: indeed, Theorem~\ref{mainthm} is equivalent to the statement that if $f_1, \ldots, f_r$ are homogeneous elements of $\bR_+$ such that no homogeneous linear combination has finite strength then $f_1, \ldots, f_r$ are algebraically independent.

(c) In \cite[\S 5]{stillman}, we gave a geometric proof of Stillman's conjecture. The basic idea is as follows. Let $X$ be the space of tuples $(f_1, \ldots, f_r)$ where $f_1, \ldots, f_r$ are homogeneous polynomials in infinitely many variables of fixed degrees $d_1, \ldots, d_r$. At each point $x=(f_1, \ldots, f_r)$ in $X$ there is a corresponding ideal $I_x=(f_1, \ldots, f_r)$ in the polynomial ring. Using the polynomiality result for $\bR$ (or, really, the bounded version $\bR^{\flat}$ discussed in \S \ref{ss:bounded}), we proved a generic version of Stillman's conjecture: for any closed subset $Z$ of $X$, the ideal $I_x$ has bounded projective dimension for generic $x \in Z$. Appealing to Draisma's theorem \cite{draisma} that $X$ is $\GL_{\infty}$-noetherian, we then deduced Stillman's conjecture from the generic version.

The generic version of Stillman's conjecture for $Z$ involves the ring $\bR^{\flat}$ where $\bk$ is the function field of $Z$. In positive characteristic, this field is typically not semi-perfect: for example, when $Z$ is the entire space $X$, the field $\bk$  is a rational function field in infinitely many indeterminates. Since our previous polynomiality result did not hold in this setting, our proof had some extra steps: we passed to the algebraic closure of $\bk$ (which is perfect), carried out our argument using the polynomiality of $\bR^{\flat}$ there, and then descended. Theorem~\ref{mainthm} is partially motivated by the desire to eliminate this complication. We are only partially successful: we show that $\bR^{\flat}$ is polynomial in some situations (such as when $Z=X$), but not all the ones used in our geometric proof of Stillman's conjecture.

Nevertheless, as we believe that the general strategy we used in our geometric approach to Stillman's conjecture could be useful in other situations, it is worthwhile to try to simplify the details. Theorem~\ref{mainthm} (and its generalizations) are a significant step in this direction.

\subsection{Overview of proof}

Our proof of Theorem~\ref{mainthm} is an adaptation of the method used in \cite[\S2]{stillman} to treat the perfect and semi-perfect cases, so we first recall that. The main idea is to characterize polynomial rings using derivations in a manner that can be applied to $\bR$. To describe this abstract characterization, let $R$ be a graded ring with $R_0=\bk$ a field.

First suppose that $\bk$ has characteristic~0. We say that $R$ has {\bf enough derivations} if for every non-zero homogeneous element $x$ of positive degree, there is a homogeneous derivation $\partial$ of $R$ of negative degree with $\partial(x) \ne 0$. We prove \cite[Theorem~2.2]{stillman} that $R$ is a polynomial ring if and only if it has enough derivations. It is easy to see that the inverse limit ring $\bR$ has enough derivations: in fact, derivatives with respect to the variables are all one needs. Thus $\bR$ is a polynomial ring.

Now suppose that $\bk$ has positive characteristic $p$. Since any derivation annihilates any $p$th power, we cannot carry over our previous characterization of polynomial rings verbatim. Recall that a {\bf Hasse derivation} of $R$ is sequence $\{\partial^n\}_{n \ge 0}$ of linear endomorphisms of $R$ such that $\partial^1$ is a derivation and $\partial^n$ behaves like $\tfrac{1}{n!} (\partial^1)^n$; see Definition~\ref{def:hasse} for the exact definition. We say that $R$ has {\bf enough Hasse derivations} if for every homogeneous element $x$ that is not in the $\bk$-span of the set $R^p$, there is a homogeneous Hasse derivation $\partial$ of negative degree such that $\partial^1(x) \ne 0$. We show \cite[Theorem~2.11]{stillman} that $R$ is a polynomial ring if and only if it has enough Hasse derivations. This abstract theorem does not require $\bk$ to be perfect; see \cite[Remark~2.12]{stillman}. When $\bk$ is perfect, it is easy to see that $\bR$ has enough Hasse derivations: the Hasse derivatives with respect to the variables are all one needs. Thus, in this case, we see that $\bR$ is a polynomial ring. (This reasoning can be extended to the case where $\bk$ is semi-perfect, see \cite[Remark~5.4]{stillman}.)

Here is the main kind of problem that arises when $\bk$ is not semi-perfect. Suppose that $\bk=\bF_p(t_1, t_2, \ldots)$ is the field of rational functions in the infinitely many variables $\{t_i\}_{i \ge 1}$, which is not semi-perfect. Consider the element $f=\sum_{i \ge 1} t_i x_i^p$ of $\bR$. This element $f$ is not a $p$th power, or even a $\bk$-linear combination of $p$th powers. Thus, if we wanted to prove $\bR$ is a polynomial ring using the criterion of the previous paragraph, we would need to produce a Hasse derivation $\partial$ such that $\partial^1(f) \ne 0$. However, if $\partial^1$ is continuous with respect to the inverse limit topology then it commutes with the infinite sum defining $f$, and so $\partial^1(f)=0$. Since all of the obvious Hasse derivations of $\bR$ are continuous, it is not clear how to proceed.

Our strategy is to give a new characterization of polynomial rings via Hasse derivations that can accommodate the issue seen above. The ``problem elements'' in $\bR$ (i.e., those annihilated by $\partial^1$ for all the obvious Hasse derivations $\partial$) are exactly those elements in which all variables appear with exponent divisible by $p$. When $\bk$ is perfect, this is exactly $\bR^p$; when $\bk$ is semi-perfect, it is exactly the $\bk$-span of $\bR^p$. In general, however, these elements cannot be directly characterized from the $\bk$-algebra structure on $\bR$. We therefore introduce a new piece of structure that detects these elements. Define ring endomorphisms $\phi$ and $\sigma$ of $\bR$ by
\begin{displaymath}
\phi \left( \sum c_e x^e \right) = \sum c_e^p x^e, \qquad
\sigma \left( \sum c_e x^e \right) = \sum c_e x^{pe},
\end{displaymath}
where the sum is over multi-indices $e$. Thus $\phi$ raises the coefficients to the $p$th power, while $\sigma$ raises the variables to the $p$th power. The ``problem elements'' are exactly those elements in the image of $\sigma$. For example, the element $f$ defined above is $\sigma(\sum_{i \ge 1} t_i x_i)$.

Returning to the abstract setting, we define an {\bf $F$-factorization} on the graded ring $R$ to be a pair $(\phi, \sigma)$ similar to the above: $\phi$ is degree preserving, $\sigma$ is $\bk$-linear, and $\phi \circ \sigma = \sigma \circ \phi$ is the Frobenius map $F$. (There is one additional condition we demand of an $F$-factorization; see Definition~\ref{def:ffac}.) Fix such a structure on $R$, and let $\cD$ be some collection of Hasse derivations on $R$.  We define a notion of admissibility for $\cD$ (Definition~\ref{def:admiss}). One of the key conditions is (or implies) that for every homogeneous element of $R$ that is not in the image of $\sigma$, there exists a Hasse derivation $\partial\in \cD$ of negative degree such that $\partial^1(x)\ne 0$.

We then show (Theorem~\ref{thm:poly}) that $R$ is a polynomial ring if and only if it admits an $F$-factorization with an admissible set $\cD$ of Hasse derivations. The proof of Theorem~\ref{thm:poly}, which is the heart of the paper, is similar to the proof of \cite[Theorem~2.11]{stillman}. Essentially, we consider a hypothetical algebraic relation among a minimal generating set, and use one of the given Hasse derivations to produce a relation of lower degree, eventually yielding a contradiction.  The argument in this paper is somewhat more complicated due to the more limited properties of the Hasse derivations in the set $\cD$.

Returning to $\bR$, we have already defined an $F$-factorization on it. Let $\cD$ be the set of Hasse derivatives with respect to the variables $x_i$. It will not be too difficult to verify that $\cD$ is admissible, and so we conclude that $\bR$ is a polynomial ring.  In essence, Theorem~\ref{thm:poly} enables us to work with a smaller set of Hasse derivatives than \cite[Theorem~2.11]{stillman}, thereby bypassing any need to construct discontinuous Hasse derivations on $\bR$.

\subsection{Notation and conventions}

Throughout, ``graded'' means graded by the non-negative integers. The symbol $p$ will always denote a prime number. Most of the rings we consider will have characteristic~$p$.

\section{$F$-factorizations}

For a ring $R$ of characteristic $p$, let $F=F_R$ be the $p$th power homomorphism $R \to R$. When $R=\bk[x_i]_{i \in I}$ is a polynomial ring, $F$ can be factored into two separate operations: raising the coefficients to the $p$th power, and raising the monomials to the $p$th power. The following definition is an abstraction of this.

\begin{definition} \label{def:ffac}
Let $R$ be a graded ring of characteristic~$p$. An {\bf $F$-factorization} on $R$ is a pair $(\phi, \sigma)$ satisfying the following:
\begin{itemize}
\item[(F1)] $\phi \colon R \to R$ is an injective homomorphism of graded rings satisfying $\phi \vert_{R_0} = F_{R_0}$.
\item[(F2)] $\sigma \colon R \to R$ is a ring homomorphism satisfying $\sigma \vert_{R_0} = \id_{R_0}$.
\item[(F3)] The maps $\phi$ and $\sigma$ commute and satisfy $\phi \circ \sigma = F_R$.
\item[(F4)] For any $\epsilon_1, \ldots, \epsilon_s \in R_0$ we have
\begin{displaymath}
\im(\sigma) \cap \sum_{j=1}^s \epsilon_j \im(\phi) = \sum_{j=1}^s \epsilon_j R^p.  \qedhere
\end{displaymath}
\end{itemize}
\end{definition}

\begin{remark}
Let $(\phi, \sigma)$ be an $F$-factorization on $R$. Then we have the following:
\begin{enumerate}
\item If $a \in R_0$ and $x \in R$, then $\phi(ax)=a^p \phi(x)$.
\item If $a \in R_0$ and $x \in R$, then $\sigma(ax)=a \sigma(x)$, that is, $\sigma$ is a homomorphism of $R_0$-algebras.
\item If $x \in R_0$ is homogeneous of degree $d$, then $\sigma(x)$ is homogeneous of degree $pd$. Indeed, $\phi$ is injective and preserves degree, and $\phi(\sigma(x))=x^p$ is homogeneous of degree $pd$. \qedhere
\end{enumerate}
\end{remark}

\begin{remark} \label{rmk:F4}
Regarding condition (F4), we note that the inclusion $\supseteq$ follows from the other axioms.  The key content of the axiom is thus the other direction, which relates elements in the $\bk$-span of $R^p$ to $\sigma$ and $\phi$.
\end{remark}

\begin{proposition} \label{prop:standard-fact}
Let $S$ be a graded ring of characteristic~$p$ with $S_0=\bF_p$, let $\bk$ be a field of characteristic~$p$, and let $R=\bk \otimes_{\bF_p} S$. Put $\phi=F_{\bk} \otimes \id_S$ and $\sigma=\id_{\bk} \otimes F_S$. Then $(\phi, \sigma)$ is an $F$-factorization on $R$.
\end{proposition}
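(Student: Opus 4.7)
The plan is to verify the four conditions in turn. Conditions (F1)--(F3) are formal consequences of the tensor product construction; the substance is in (F4).

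For (F1), $\phi=F_\bk\otimes\id_S$ is a ring homomorphism because $F_\bk$ is, and it preserves the grading because $\bk$ is concentrated in degree~$0$ while $\id_S$ preserves the grading of $S$; on $R_0=\bk\otimes_{\bF_p}\bF_p=\bk$ it restricts to $F_\bk=F_{R_0}$; and it is injective because $S$ is flat (in fact free) over the field $\bF_p$ and $F_\bk$ is injective. For (F2), $\sigma=\id_\bk\otimes F_S$ is a ring homomorphism which restricts on $R_0$ to $\id\otimes F_{\bF_p}=\id$, using that Frobenius is trivial on $\bF_p$. For (F3), a direct computation gives $\phi\sigma=\sigma\phi=F_\bk\otimes F_S$, and the characteristic-$p$ identity $(a\otimes s)^p=a^p\otimes s^p$ identifies this with $F_R$.

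The heart of the proof is (F4). The plan is to identify, as additive subgroups of $R=\bk\otimes_{\bF_p}S$,
\[
\im(\phi)=\bk^p\otimes_{\bF_p}S,\qquad \im(\sigma)=\bk\otimes_{\bF_p}S^p,\qquad R^p=\bk^p\otimes_{\bF_p}S^p,
\]
the inclusions being injective by flatness over the field $\bF_p$, and the third description following from the factorization $F_R=F_\bk\otimes F_S$. Since $F_S$ is $\bF_p$-linear, $S^p\subseteq S$ is an $\bF_p$-subspace, so I can choose an $\bF_p$-basis $\{s_\beta\}_{\beta\in B}$ of $S$ such that $\{s_\beta\}_{\beta\in B_0}$ is an $\bF_p$-basis of $S^p$ for some $B_0\subseteq B$. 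Decomposing everything against this basis gives
\[
\im(\phi)=\bigoplus_{\beta\in B}\bk^p s_\beta,\qquad \im(\sigma)=\bigoplus_{\beta\in B_0}\bk\,s_\beta,\qquad R^p=\bigoplus_{\beta\in B_0}\bk^p s_\beta.
\]
Writing $V=\epsilon_1\bk^p+\cdots+\epsilon_s\bk^p\subseteq\bk$, I then get $\sum_j\epsilon_j\im(\phi)=\bigoplus_{\beta\in B}V\,s_\beta$ and $\sum_j\epsilon_j R^p=\bigoplus_{\beta\in B_0}V\,s_\beta$, so intersecting the first with $\im(\sigma)$ cuts the indexing set down from $B$ to $B_0$ and recovers the second.

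The only non-routine point is the choice of a basis of $S$ adapted to the subspace $S^p$; after that, (F4) is purely bookkeeping with direct sums. I do not foresee any serious obstacle, since $S$ being free over the field $\bF_p$ makes all the tensor computations go through cleanly.
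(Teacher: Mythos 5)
Your proof is correct, and its core idea is the same as the paper's: decompose $R$ against an $\mathbb{F}_p$-basis of $S$ and compare the relevant subgroups termwise. But your treatment is cleaner and slightly more general in two ways. First, the paper writes $f=\sum a_e x^e$ in the monomial basis and checks divisibility of exponents by $p$; as literally written this presumes $S$ is a polynomial ring (the only case used later, in Example~\ref{ex:pfac}), whereas the proposition allows an arbitrary graded $S$ with $S_0=\mathbb{F}_p$. Your choice of an arbitrary $\mathbb{F}_p$-basis of $S$ adapted to the $\mathbb{F}_p$-subspace $S^p$ handles the general case on equal footing. Second, the paper first reduces WLOG to the case where the $\epsilon_j$ are $\bk^p$-linearly independent and then argues coefficient by coefficient; by packaging the $\epsilon_j$ into the subspace $V=\sum_j\epsilon_j\bk^p\subseteq\bk$ you sidestep that reduction entirely, since any dependence is absorbed into $V$. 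The identifications $\im(\phi)=\bk^p\otimes_{\mathbb{F}_p}S$, $\im(\sigma)=\bk\otimes_{\mathbb{F}_p}S^p$, $R^p=\bk^p\otimes_{\mathbb{F}_p}S^p$ (justified by flatness over the field $\mathbb{F}_p$, as you note) are exactly the right invariants to isolate, and the ensuing direct-sum bookkeeping delivers both inclusions of (F4) at once rather than separately.
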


\begin{proof}
Conditions (F1), (F2), and (F3) are easy to see directly. We prove (F4). Let $\epsilon_1, \dots, \epsilon_s \in R_0$; we may assume, without loss of generality, that they are linearly independent over $R_0^p$. The inclusion $\sum_j \epsilon_j R^p \subseteq \im(\sigma) \cap \sum_{j=1}^s \epsilon_j \im(\phi)$ holds since every element in $R^p$ is in the image of both $\sigma$ and $\phi$. Conversely, suppose that $f$ is an element of $\im(\sigma) \cap \sum_j \epsilon_j \im(\phi)$. Write $f=\sum a_e x^e$, where the sum is over multi-indices $e$ and $a_e \in \bk$. Since $f \in \im(\sigma)$, we have $a_e=0$ unless $p \mid e$. Since $f \in \sum_j \epsilon_j \im(\phi)$, we can write $f= \sum_j \epsilon_j \phi(g_j)$ for some $g_j \in R$. Write $g_j = \sum b_{j,e} x^e$. Equating coefficients, we find $a_e = \sum_{j=1}^s \epsilon_j b_{j,e}^p$. If $p \nmid e$ then this vanishes; since the $\epsilon_j$ are linearly independent over $\bk^p$, it follows that each $b_{j,e}$ vanishes as well. We conclude that all monomials appearing in $g_j$ with non-zero coefficient are $p$th powers; in other words, we can write $g_j=\sigma(h_j)$ where $h_j=\sum b_{j,pe} x^e$. We thus find $f=\sum_{j=1}^s \epsilon_j \phi(\sigma(h_j)) \in \sum_{j=1}^s \epsilon_j R^p$.
\end{proof}

\begin{example} \label{ex:pfac}
Let $R=\bk[x_i]_{i \in I}$ be a polynomial ring, where each $x_i$ is homogeneous of positive degree. Letting $S=\bF_p[x_i]_{i \in I}$, we have $R=\bk \otimes_{\bF_p} S$. Proposition~\ref{prop:standard-fact} provides an $F$-factorization on $R$ which we call the {\bf standard $F$-factorization} on $R$. We note that it depends on the choice of variables, i.e., it is not invariant under automorphisms of $R$ (in the category of graded $\bk$-algebras).
\end{example}

\begin{proposition} \label{prop:pfac-reduced}
Let $R$ be a graded ring with an $F$-factorization $(\phi, \sigma)$. Then $R$ is reduced if and only if $\sigma$ is injective.
\end{proposition}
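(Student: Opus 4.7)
The plan is to prove both implications using only axioms (F1)--(F3); condition (F4) will not be needed. The key tools are the identity $\phi \circ \sigma = F_R$, the commutativity $\phi \circ \sigma = \sigma \circ \phi$ (both from F3), and the injectivity of $\phi$ from (F1).

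For the forward direction, I would assume $R$ is reduced and take $x \in R$ with $\sigma(x) = 0$. Applying $\phi$ gives $\phi(\sigma(x)) = F_R(x) = x^p$, so $x^p = 0$. Reducedness forces $x = 0$, establishing injectivity of $\sigma$. This is essentially a one-line argument.

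For the converse, I would assume $\sigma$ is injective and suppose $x \in R$ is nilpotent, say $x^n = 0$. Choose $m$ with $p^m \ge n$, so that $x^{p^m} = 0$, i.e., $F_R^m(x) = 0$. Since $\phi$ and $\sigma$ commute, we have $F_R^m = \phi^m \circ \sigma^m$, hence $\phi^m(\sigma^m(x)) = 0$. The injectivity of $\phi$ iterates to give $\phi^m$ injective, so $\sigma^m(x) = 0$; injectivity of $\sigma$ then iterates to give $\sigma^m$ injective, so $x = 0$. This shows $R$ is reduced.

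The main (very mild) subtlety is the iteration step in the converse: the factorization $F_R = \phi \circ \sigma$ only directly addresses elements $x$ with $x^p = 0$, whereas a general nilpotent might only satisfy $x^n = 0$ for some larger $n$. This is handled by passing to the $m$th iterate and using commutativity of $\phi$ and $\sigma$ to factor $F_R^m$ cleanly. There is no other obstacle; in particular, the hypothesis (F4), which is the substantive part of an $F$-factorization, plays no role here.
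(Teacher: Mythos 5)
Your proof is correct and takes essentially the same approach as the paper: both rest on the observation that $F_R = \phi \circ \sigma$ with $\phi$ injective, so $F_R$ is injective if and only if $\sigma$ is. The paper simply cites the standard fact that a characteristic-$p$ ring is reduced if and only if Frobenius is injective, whereas you re-derive that fact inline via the iteration $F_R^m = \phi^m \circ \sigma^m$; your observation that (F4) plays no role is accurate.
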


\begin{proof}
The ring $R$ is reduced if and only if $F=F_R$ is injective. Since $F$ factors as $\phi \circ \sigma$ and $\phi$ is injective, we see that $F$ is injective if and only if $\sigma$ is injective.
\end{proof}

\begin{proposition} \label{prop:pfac2}
Let $R$ be a reduced graded ring with an $F$-factorization $(\phi, \sigma)$. Let $\epsilon_1, \ldots, \epsilon_s \in R_0$ and let $x \in R$. Suppose that $\sigma(x) \in \sum_{j=1}^s \epsilon_j \im(\phi)$. Then $x \in \sum_{j=1}^s \epsilon_j \im(\phi)$.
\end{proposition}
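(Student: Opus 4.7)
The plan is to use (F4) to convert the given membership $\sigma(x) \in \sum_j \epsilon_j \im(\phi)$ into an expression of the form $\sum_j \epsilon_j r_j^p$, then exhibit a candidate $y \in \sum_j \epsilon_j \im(\phi)$ with $\sigma(y) = \sigma(x)$, and finally invoke the reducedness hypothesis via Proposition~\ref{prop:pfac-reduced} to cancel $\sigma$.

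In detail: since $\sigma(x) \in \im(\sigma)$ automatically, the hypothesis places $\sigma(x)$ in $\im(\sigma) \cap \sum_j \epsilon_j \im(\phi)$. By axiom (F4) we may therefore write
\[
\sigma(x) = \sum_{j=1}^s \epsilon_j r_j^p
\]
for some $r_j \in R$. Now set $y = \sum_j \epsilon_j \phi(r_j)$, which visibly lies in $\sum_j \epsilon_j \im(\phi)$. Using that $\sigma$ is an $R_0$-algebra homomorphism together with the factorization identity $\sigma \circ \phi = \phi \circ \sigma = F_R$ from (F3), we compute
\[
\sigma(y) = \sum_{j=1}^s \epsilon_j \, \sigma(\phi(r_j)) = \sum_{j=1}^s \epsilon_j \, F(r_j) = \sum_{j=1}^s \epsilon_j r_j^p = \sigma(x).
\]

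Thus $\sigma(x - y) = 0$. Since $R$ is reduced, Proposition~\ref{prop:pfac-reduced} tells us $\sigma$ is injective, so $x = y \in \sum_j \epsilon_j \im(\phi)$, as desired.

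There is no serious obstacle here; the main conceptual point is recognizing that (F4) is precisely the tool that lets us extract $p$th powers from the expression for $\sigma(x)$, and that the commutation $\sigma \phi = \phi \sigma$ then lets us swap a $\phi$ back in to produce a preimage under $\sigma$. Reducedness is used only at the very end to promote the equality $\sigma(x) = \sigma(y)$ to $x = y$.
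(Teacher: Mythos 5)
Your proof is correct and takes essentially the same route as the paper: apply (F4) to write $\sigma(x)=\sum_j \epsilon_j r_j^p$, rewrite $r_j^p=\sigma(\phi(r_j))$, and cancel $\sigma$ using injectivity from Proposition~\ref{prop:pfac-reduced}. The only difference is that you spell out the intermediate element $y$ and the computation $\sigma(y)=\sigma(x)$ explicitly, whereas the paper collapses this into a single sentence.
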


\begin{proof}
The element $\sigma(x)$ belongs to $\im(\sigma) \cap \sum_{j=1}^s \epsilon_j \im(\phi)$, which is equal to $\sum_{j=1}^s \epsilon_j R^p$ by (F4). We can thus write $\sigma(x)= \sum_{j=1}^s \epsilon_j y_j^p$ for elements $y_j \in R$. Since $y_j^p=\sigma(\phi(y_j))$ and $\sigma$ is injective (Proposition~\ref{prop:pfac-reduced}), we see that $x = \sum_{j=1}^s \epsilon_j \phi(y_j)$.
\end{proof}

\begin{definition} \label{defn:level}
Let $R$ be a graded ring with an $F$-factorization $(\phi, \sigma)$. We define a decreasing filtration $\{ \cF^r R \}_{r \ge 0}$ on $R$ by $\cF^r R=\im(\sigma^r)$. We call this the {\bf level filtration}. We define the {\bf level} of $x \in R$ to be the supremum of the set $\{ r \in \bN \mid x \in \cF^r R\}$. For a subspace $V$ of $R$, we let $\cF^r V = V \cap \cF^r R$.
\end{definition}

\begin{remark}
  If $x$ is non-zero and of positive degree $d$, and $p^t$ is the maximum power of $p$ dividing $d$, then $x$ has level at most $t$. Thus the level filtration on $R_d$ is finite, that is, $\cF^r R_d=0$ for $r \gg d$ (in fact, $r>t$).
\end{remark}

\section{Hasse derivations}

\begin{definition} \label{def:hasse}
Let $R$ be a ring. A {\bf Hasse derivation} on $R$ is a collection $\partial=\{\partial^n\}_{n \in \bN}$ of additive endomorphisms of $R$ such that $\partial^0=\id$ and the identity
\begin{displaymath}
\partial^n(xy) = \sum_{i+j=n} \partial^i(x) \partial^j(y)
\end{displaymath}
holds for all $x,y \in R$. 
If $R$ has characteristic~$p$, we write $\partial^{[r]}$ in place of $\partial^{p^r}$.
\end{definition}

\begin{definition}
Let $R$ be a graded ring and let $\partial$ be a Hasse derivation on $R$. We say that $\partial$ is {\bf homogeneous of degree $d$} if for all homogeneous elements $x$ of $R$ the element $\partial^n(x)$ is homogeneous of degree $\deg(x)+nd$.
\end{definition}

\begin{example}
Let $R=\bk[x]$. Define $\partial^n$ to be the endomorphism of $R$ mapping $x^k$ to $\binom{k}{n} x^{k-n}$; here, as always, $\binom{k}{n}=0$ for $k<n$. Then $\partial$ is a Hasse derivation. Indeed, it suffices to check the defining identity on monomials. We have
\begin{displaymath}
\partial^n(x^a x^b)=\binom{a+b}{n} x^{a+b-n}, \qquad
\sum_{i+j=n} \partial^i(x^a) \partial^j(x^b) = \sum_{i+j=n} \binom{a}{i} \binom{b}{j} x^{a+b-n},
\end{displaymath}
which are equal by a standard identity on binomial coefficients (think about choosing a subset of size $n$ from $\{1,\dots,a\} \amalg \{a+1,\dots,a+b\}$).  We call $\partial$ the {\bf Hasse derivative}.

More generally, suppose $R=\bk[x_i]_{i \in I}$ is a multivariate polynomial ring, and pick $j \in I$. Then the Hasse derivative on $\bk[x_j]$ can be extended to $R$ by making it linear over $\bk[x_i]_{i \in I \setminus \{j\}}$. We call this Hasse derivation on $R$ the Hasse derivative with respect to $x_j$. If $R$ is graded and $x_j$ is homogeneous of degree $d$ then the Hasse derivative with respect to $x_j$ is homogeneous of degree $-d$.
\end{example}

\begin{proposition} \label{prop:hassepower}
Let $\partial$ be a Hasse derivation on the ring $R$, let $x \in R$, and let $k>0$. Then $\partial^k(x^n)$ can be expressed as a polynomial in $x$ of degree $\le n-1$ with coefficients in the subalgebra $\bZ[\partial^i(x)]_{1 \le i \le k}$.
\end{proposition}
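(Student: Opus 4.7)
The plan is to prove this by induction on $n$. The base case $n=1$ is immediate: $\partial^k(x)$ lies in $\bZ[\partial^i(x)]_{1 \le i \le k}$ (take $i=k$), and a constant is a polynomial in $x$ of degree $\le 0 = n-1$. (One could equally well start at $n=0$, where $\partial^k(1)=0$ is the zero polynomial, using the standard fact that a Hasse derivation annihilates~$1$ in positive indices.)

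For the inductive step, I would apply the Leibniz identity from Definition~\ref{def:hasse} to $x^{n+1}=x\cdot x^n$:
\begin{displaymath}
\partial^k(x^{n+1})=\sum_{i=0}^{k}\partial^i(x)\,\partial^{k-i}(x^n).
\end{displaymath}
Split the sum into the $i=0$ term and the rest. The $i=0$ term is $x\cdot\partial^k(x^n)$; by the inductive hypothesis, $\partial^k(x^n)$ is a polynomial in $x$ of degree $\le n-1$ with coefficients in $\bZ[\partial^i(x)]_{1\le i\le k}$, so multiplication by $x$ yields a polynomial of degree $\le n$ with coefficients in the same subalgebra. For each term with $i\ge 1$, the factor $\partial^i(x)$ is a generator of $\bZ[\partial^i(x)]_{1\le i\le k}$, and the factor $\partial^{k-i}(x^n)$ is either $x^n$ itself (when $i=k$) or, when $k-i>0$, by induction a polynomial in $x$ of degree $\le n-1$ with coefficients in $\bZ[\partial^j(x)]_{1\le j\le k-i}\subseteq\bZ[\partial^j(x)]_{1\le j\le k}$. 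In both sub-cases the product is a polynomial in $x$ of degree $\le n$ with coefficients in the desired subalgebra.

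Adding these contributions gives the required expression for $\partial^k(x^{n+1})$. I do not anticipate a serious obstacle: the argument is a routine bookkeeping induction whose only subtlety is ensuring the degree bound is maintained by the $i=0$ term (which produces the single factor of $x$ that raises the degree from $n-1$ to $n$). No use of the graded or characteristic-$p$ structure is needed; the statement and proof are purely about the multiplicative action of Hasse derivations.
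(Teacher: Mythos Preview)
Your argument is correct and matches the paper's proof essentially verbatim: both split off a single factor of $x$ via the Leibniz rule and induct (the paper writes $x^n=x^{n-1}\cdot x$ and says ``by induction on $n$ and $k$,'' but as you observed, induction on $n$ alone, with the statement quantified over all $k>0$, suffices).
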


\begin{proof}
If $n=1$ there is nothing to prove. Otherwise, we write
\begin{displaymath}
\partial^k(x^n) = \sum_{i+j=k} \partial^i(x^{n-1}) \partial^j(x),
\end{displaymath}
and the result follows by induction on $n$ and $k$.
\end{proof}

\begin{proposition} \label{prop:hasse1}
Let $R$ be a ring of characteristic~$p$. Let $\partial$ be a Hasse derivation on $R$ and let $x \in R$. Then
\begin{displaymath}
\partial^{[r]}(F^s{x}) = \begin{cases}
F^s(\partial^{[r-s]}{x}) & \text{if $r \ge s$} \\
0 & \text{if $r \le s$} \end{cases}.
\end{displaymath}
\end{proposition}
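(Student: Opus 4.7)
The plan is to reduce the claim to the single-step identity $\partial^n(y^p) = (\partial^m y)^p$ if $n = pm$ and $\partial^n(y^p) = 0$ otherwise, then iterate it $s$ times with $y$ running through successive Frobenius powers of $x$. To establish the single-step identity cleanly, I would package the Hasse derivation into the ring homomorphism
\[
\widehat{\partial} : R \to R[\![t]\!], \qquad \widehat{\partial}(y) = \sum_{n \ge 0} \partial^n(y)\, t^n,
\]
whose multiplicativity is exactly the Leibniz identity in Definition~\ref{def:hasse} (together with $\partial^0 = \id$). Since $R$ has characteristic $p$, the Frobenius on $R[\![t]\!]$ sends $\sum a_n t^n$ to $\sum a_n^p t^{pn}$, so applying Frobenius to $\widehat{\partial}(y)$ and using that $\widehat{\partial}$ is a ring map gives
\[
\sum_{n \ge 0} \partial^n(y^p)\, t^n \;=\; \widehat{\partial}(y^p) \;=\; \widehat{\partial}(y)^p \;=\; \sum_{n \ge 0} \partial^n(y)^p\, t^{pn},
\]
and comparing coefficients yields the single-step identity at once.

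With this in hand, I would prove the proposition by induction on $s$. The base case $s=0$ is tautological since $F^0 x = x$. For the inductive step, write $F^s x = (F^{s-1} x)^p$ and apply the single-step identity with $y = F^{s-1} x$ and $n = p^r$. If $r \ge 1$, then $p \mid p^r$ and the identity gives $\partial^{[r]}(F^s x) = \bigl(\partial^{[r-1]}(F^{s-1} x)\bigr)^p$; now the inductive hypothesis applied at $(r-1, s-1)$ finishes both subcases: when $r \ge s$ it yields $\bigl(F^{s-1}(\partial^{[r-s]} x)\bigr)^p = F^s(\partial^{[r-s]} x)$, and when $r < s$ it yields $0^p = 0$. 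If instead $r = 0$, then $\partial^{[0]} = \partial^1$ and the single-step identity (with $p \nmid 1$) directly gives $0$, which is consistent with the second case of the formula since $0 < s$ in the inductive step.

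I do not anticipate a real obstacle: the only subtle point is the bookkeeping in the exponents, which is handled automatically once the single-step identity is phrased via the generating series $\widehat{\partial}$. This is essentially the standard argument that, over a field of characteristic $p$, a Hasse derivation on $R$ descends to a Hasse derivation on the subring $R^p$ shifted in index by $p$. I would also remark that the case $r = s$ is covered by the first branch and produces $F^s(x)$, which aligns with the inductive computation above and confirms that the second branch of the formula applies for the strict inequality.
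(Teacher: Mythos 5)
The paper itself does not prove this proposition; it defers to \cite[Lemma 2.9]{stillman}, so there is no in-paper proof to compare against. Your argument is correct in substance and uses a standard, clean route: package the Hasse derivation into the ring homomorphism $\widehat{\partial}\colon R \to R[\![t]\!]$, apply Frobenius, read off the single-step identity $\partial^n(y^p) = (\partial^m y)^p$ when $n=pm$ and $0$ otherwise, and then induct on $s$. The induction is bookkept correctly: for $r\ge 1$ the step passes to $(r-1,s-1)$, and for $r=0$ the identity with $p\nmid 1$ gives $0$ directly, matching the requirement since $s\ge 1$ in the inductive step.

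Two small points are worth fixing. First, your closing remark that at $r=s$ the first branch ``produces $F^s(x)$'' is not right: with the paper's convention $\partial^{[0]}=\partial^{p^0}=\partial^1$ (not $\partial^0=\id$), the first branch at $r=s$ gives $F^s(\partial^{[0]}x)=F^s(\partial^1 x)$, which need not vanish and need not equal $F^s(x)$. This is a momentary confusion of $\partial^{[0]}$ with $\partial^0$; it does not affect the inductive argument, only the remark. Second, and more usefully, your derivation shows that the zero branch holds exactly when $r<s$, which is the correct statement --- the ``$r\le s$'' in the statement of Proposition~\ref{prop:hasse1} is a small overlap/typo, since at $r=s$ the two branches disagree in general. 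This reading is confirmed by Proposition~\ref{prop:hassesigma}, where the analogous dichotomy is stated with the strict inequality $r>s$ for the zero branch.
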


\begin{proof}
See \cite[Lemma 2.9]{stillman}.
\end{proof}

The following corollary of the above proposition will be used constantly without reference:

\begin{corollary} \label{cor:partial-preserve}
Let $R$ have characteristic $p$, and let $\epsilon_1, \ldots, \epsilon_s \in R_0$. Then for any $R_0$-linear Hasse derivation $\partial$, the space $\sum_{j=1}^s \epsilon_j R^p$ is killed by $\partial^{[0]}$ and stable under $\partial^{[r]}$ for all $r \ge 0$.
\end{corollary}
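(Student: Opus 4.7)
The plan is straightforward: this should be an immediate corollary of Proposition~\ref{prop:hasse1} applied with $s=1$, combined with the $R_0$-linearity hypothesis on $\partial$. Any element of $\sum_{j=1}^s \epsilon_j R^p$ has the form $\sum_j \epsilon_j y_j^p$ for some $y_j \in R$, and each $y_j^p$ is $F^1 y_j$ in the notation of the proposition.

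First I would handle a single $p$th power $y^p$. Setting $s=1$ in Proposition~\ref{prop:hasse1}, the $r<s$ case gives $\partial^{[0]}(y^p)=0$ (recalling that $\partial^{[0]} = \partial^{p^0} = \partial^1$ is the underlying derivation, this is the familiar identity that a derivation annihilates $p$th powers in characteristic~$p$). For $r \ge 1$, the $r \ge s$ case gives
\begin{displaymath}
\partial^{[r]}(y^p) = F^1(\partial^{[r-1]}(y)) = (\partial^{[r-1]}(y))^p \in R^p.
\end{displaymath}

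Then I would extend over the sum using the $R_0$-linearity of each $\partial^n$:
\begin{displaymath}
\partial^{[0]}\Bigl(\sum_{j=1}^s \epsilon_j y_j^p\Bigr) = \sum_{j=1}^s \epsilon_j \partial^{[0]}(y_j^p) = 0,
\end{displaymath}
and, for $r \ge 1$,
\begin{displaymath}
\partial^{[r]}\Bigl(\sum_{j=1}^s \epsilon_j y_j^p\Bigr) = \sum_{j=1}^s \epsilon_j (\partial^{[r-1]}(y_j))^p \in \sum_{j=1}^s \epsilon_j R^p,
\end{displaymath}
which is exactly what the corollary asserts (stability at $r=0$ is trivial given the vanishing). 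There is no real obstacle; the work is already packaged in Proposition~\ref{prop:hasse1}. The only minor bookkeeping point is to keep in mind the notational convention $\partial^{[r]} = \partial^{p^r}$, so that $\partial^{[0]}$ is the honest derivation (not the identity), and the claim ``$\partial^{[0]}$ kills $\sum_j \epsilon_j R^p$'' really is the classical characteristic-$p$ statement about derivations and $p$th powers, pushed through the $R_0$-linear combination.
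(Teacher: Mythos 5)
Your proof is correct and is precisely the short argument the paper leaves implicit when it calls this a corollary of Proposition~\ref{prop:hasse1}: use $R_0$-linearity of $\partial^{[r]}$ to reduce to a single $p$th power $y^p = F^1(y)$, then invoke the proposition with Frobenius exponent $1$ to get $\partial^{[0]}(y^p)=0$ and $\partial^{[r]}(y^p)=(\partial^{[r-1]}y)^p\in R^p$ for $r\ge 1$. Your aside flagging the clash between the $s$ of Proposition~\ref{prop:hasse1} and the $s$ of the corollary is a sensible precaution, and your reading of the convention $\partial^{[0]}=\partial^1$ is the right one.
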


\section{Admissible sets}

Fix a graded ring $R$ with $R_0$ a field of characteristic~$p$ and an $F$-factorization $(\phi, \sigma)$ on $R$.

\begin{definition} \label{def:admiss}
Let $\cD$ be a set of Hasse derivations on $R$. We say that $\cD$ is {\bf admissible} (with respect to $(\phi,\sigma)$) if the following conditions are satisfied:
\begin{itemize}
\item[(D1)] Each $\partial \in \cD$ is homogeneous of negative degree.
\item[(D2)] Each $\partial \in \cD$ commutes with $\phi$.
\item[(D3)] Let $x \in R$ be homogeneous, and let $\epsilon_1, \ldots, \epsilon_s \in R_0$ (we allow $s=0$). Suppose that $\partial^{[0]}(x) \in \sum_{j=1}^s \epsilon_j \im(\phi)$ for all $\partial \in \cD$. Then $x \in \im(\sigma) + \sum_{j=1}^s \epsilon_j \im(\phi)$. \qedhere
\end{itemize}
\end{definition}

\begin{remark}
Suppose that $\bk$ is a perfect field of characteristic~$p$, that $R$ has the form $\bk \otimes_{\bF_p} S$ for some graded ring $S$ with $S_0=\bF_p$, and that $R$ is equipped with the $F$-factorization constructed in Proposition~\ref{prop:standard-fact}. Then $\phi$ is surjective and $\im(\sigma)=R^p$. Thus condition (D3) is vacuous if some $\epsilon_i$ is non-zero.  If all $\epsilon_i$ vanish (or if $s=0$) then it simply states that $\bigcap_{\partial \in \cD} \ker(\partial^1) = R^p$. Furthermore, a Hasse derivation of $R$ commutes with $\phi$ if and only if it comes from a Hasse derivation of $S$ by extension of scalars. Thus, $R$ admits an admissible set of Hasse derivations if and only if $S$ has enough Hasse derivations in the sense of \cite[Definition 2.10]{stillman}. (In fact, using \cite[Theorem~2.11]{stillman}, one can show that $S$ has enough Hasse derivations if and only if $R$ does.)
\end{remark}

\begin{example} \label{ex:hasse}
Suppose $R=\bk[x_i]_{i \in I}$ and $(\phi, \sigma)$ is the standard $F$-factorization. Let $\cD$ be the collection of Hasse derivatives with respect to the variables. We claim that this is admissible. Conditions (D1) and (D2) are immediate, so we check (D3). Pick $\epsilon_1,\dots,\epsilon_s \in R_0$ and suppose that $\partial^{[0]}(f) \in \sum_{j=1}^s \epsilon_j \im(\phi)$ for all Hasse derivatives $\partial$. Write $f=\sum c_e x^e$, where the sum is over multi-indices $e$, and let $V \subset \bk$ be the $\bk^p$-span of $\epsilon_1, \ldots, \epsilon_s$. Then $\sum_{j=1}^s \epsilon_j \im(\phi)$ consists of those elements of $R$ for which all coefficients belong to $V$, while $\im(\sigma)$ consists of those elements of $R$ for which all exponents are divisible by $p$. Thus to show that $f$ belongs to $\im(\sigma)+\sum_{j=1}^s \epsilon_j \im(\phi)$, we must show that if $e$ is not divisible by $p$ then $c_e$ belongs to $V$. Let such an $e$ be given, and pick $i \in I$ such that $e_i$ is not divisible by $p$. Let $\partial$ be the Hasse derivative with respect to $x_i$. Then $\partial^{[0]}(x^e)=e_i x^{e'}$, where $e'$ is obtained from $e$ by decrementing the $i$th entry by one. By assumption, $\partial^{[0]}(f)$ belongs to $\sum_{j=1}^s \epsilon_j \im(\phi)$, and so all coefficients of $\partial^{[0]}(f)$ belong to $V$. The coefficient of $x^{e'}$ is exactly $e_i c_e$. Since $e_i$ is a non-zero element of $\bk^p$, as it is an integer not divisible by $p$, it follows that $c_e \in V$, and so the result follows.
\end{example}

For the remainder of this section, we fix an admissible set $\cD$, and deduce some simple consequences of the definitions.

\begin{proposition}
Each $\partial \in \cD$ is $R_0$-linear.
\end{proposition}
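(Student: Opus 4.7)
The plan is to derive $R_0$-linearity straight from the Leibniz rule together with condition (D1). For $\partial \in \cD$, $a \in R_0$, and $x \in R$ (which we may take homogeneous by additivity), we want to show $\partial^n(ax) = a\,\partial^n(x)$ for every $n \ge 0$.

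First I would note that since $a \in R_0$, it is homogeneous of degree $0$. By (D1), $\partial$ is homogeneous of some negative degree $d < 0$. Therefore $\partial^n(a)$ is homogeneous of degree $\deg(a) + nd = nd$. For $n \ge 1$ this is a negative integer; but $R$ is graded by non-negative integers, so $R_{nd} = 0$ and hence $\partial^n(a) = 0$. Combined with $\partial^0 = \id$, we conclude $\partial^n(a) = 0$ for all $n \ge 1$.

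The Leibniz rule for Hasse derivations (Definition~\ref{def:hasse}) then gives
\begin{displaymath}
\partial^n(ax) = \sum_{i+j=n} \partial^i(a)\, \partial^j(x) = \partial^0(a)\, \partial^n(x) = a\, \partial^n(x),
\end{displaymath}
where only the $i=0$ term survives. This holds for each $n$, so $\partial$ commutes with multiplication by $a$, i.e.\ $\partial$ is $R_0$-linear. There is no real obstacle here: the statement is essentially a corollary of (D1) and the fact that $R$ is non-negatively graded, with no appeal to (D2), (D3), or the $F$-factorization needed.
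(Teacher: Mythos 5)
Your proof is correct and matches the paper's argument essentially verbatim: both observe via (D1) that $\partial^i(a)$ has negative degree for $i \ge 1$ and hence vanishes, so only the $i=0$ term in the Leibniz sum survives.
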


\begin{proof}
Let $a \in R_0$ and $x \in R$. We have $\partial^n(ax)=\sum_{i+j=n} \partial^i(a) \partial^j(x)$. We have $\deg(\partial^i(a))=i \deg(\partial)$, which is negative for $i>0$ and thus vanishes. Hence, $\partial^n(ax)=a \partial^n(x)$.
\end{proof}

\begin{proposition} \label{prop:hassesigma}
Let $\partial \in \cD$. Then
\begin{displaymath}
\partial^{[s]}(\sigma^r(x)) = \begin{cases}
\sigma^r(\partial^{[s-r]}(x)) & \text{if $r \le s$} \\
0 & \text{if $r>s$} \end{cases}.
\end{displaymath}
\end{proposition}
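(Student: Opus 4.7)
The plan is to reduce the statement to Proposition~\ref{prop:hasse1} by applying the injective operator $\phi^r$ to both sides. Two ingredients from the hypotheses do all the work: iterating (F3), $\phi$ and $\sigma$ commute and satisfy $\phi^r\circ\sigma^r = F_R^r$; and iterating (D2), $\phi^r$ commutes with each $\partial^{[s]}$.

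Concretely, I would compute
\begin{displaymath}
\phi^r\bigl(\partial^{[s]}(\sigma^r(x))\bigr)
= \partial^{[s]}\bigl(\phi^r(\sigma^r(x))\bigr)
= \partial^{[s]}\bigl(F_R^r(x)\bigr),
\end{displaymath}
and then invoke Proposition~\ref{prop:hasse1} (with the roles of its $r$ and $s$ interchanged) to evaluate the right-hand side. It equals $F_R^r(\partial^{[s-r]}(x)) = \phi^r\bigl(\sigma^r(\partial^{[s-r]}(x))\bigr)$ when $r\le s$, and $0$ when $r>s$. Since $\phi$ is injective by (F1), so is its $r$-fold iterate $\phi^r$; cancelling $\phi^r$ from both sides gives the desired identities.

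The argument is entirely formal, so I do not expect any serious obstacle. The conceptual content is that the $F$-factorization axiom $F_R=\phi\circ\sigma$ together with (D2) is exactly what allows us to transport the known commutation between $\partial^{[\bullet]}$ and the Frobenius (Proposition~\ref{prop:hasse1}) to the analogous commutation with the $\sigma$-factor of Frobenius.
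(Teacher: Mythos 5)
Your proposal is correct and is essentially the same argument the paper gives, only spelled out in more detail: the paper's proof simply says ``check after applying $\phi^r$ to each side, since $\phi$ is injective, then invoke Proposition~\ref{prop:hasse1},'' and your use of (D2) to commute $\phi^r$ past $\partial^{[s]}$ and (F3) to rewrite $\phi^r\circ\sigma^r$ as $F_R^r$ is exactly what that terse statement has in mind.
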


\begin{proof}
We can check after applying $\phi^r$ to each side, since $\phi$ is injective. The result now follows from Proposition~\ref{prop:hasse1}.
\end{proof}

\begin{corollary} \label{cor:levelt}
  Let $x \in R$ be an element of level $\ge t$. Then:
  \[
    \partial^{[t]}(x^n) = nx^{n-1} \partial^{[t]}(x).
  \]
\end{corollary}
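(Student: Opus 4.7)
The plan is to exploit the fact that an element of level $\ge t$ is literally in the image of $\sigma^t$, which lets us trade $\partial^{[t]}$ for the much simpler operator $\partial^{[0]} = \partial^1$, which is an honest derivation. All the heavy lifting is already done by Proposition~\ref{prop:hassesigma}; the corollary is essentially the observation that pulling back along $\sigma^t$ converts a high-order Hasse operator into a first-order one, where the Leibniz rule gives the desired formula.

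Concretely, I would first use the definition of the level filtration to write $x = \sigma^t(y)$ for some $y \in R$. Because $\sigma$ is a ring homomorphism (axiom (F2)), we have $x^n = \sigma^t(y^n)$. Applying Proposition~\ref{prop:hassesigma} with $r = s = t$ yields
\begin{displaymath}
\partial^{[t]}(x^n) = \partial^{[t]}(\sigma^t(y^n)) = \sigma^t(\partial^{[0]}(y^n)).
\end{displaymath}
Since $\partial^{[0]} = \partial^1$ is a derivation (this is immediate from the Hasse derivation identity in Definition~\ref{def:hasse}), the standard Leibniz computation gives $\partial^{[0]}(y^n) = n y^{n-1} \partial^{[0]}(y)$.

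Finally, applying the ring homomorphism $\sigma^t$ and using Proposition~\ref{prop:hassesigma} once more (again with $r = s = t$) in the reverse direction, I get
\begin{displaymath}
\sigma^t\bigl(n y^{n-1} \partial^{[0]}(y)\bigr) = n\, \sigma^t(y)^{n-1}\, \sigma^t(\partial^{[0]}(y)) = n\, x^{n-1}\, \partial^{[t]}(\sigma^t(y)) = n\, x^{n-1}\, \partial^{[t]}(x),
\end{displaymath}
which is the claimed identity. There is no real obstacle here: the argument is a three-line computation, and the only subtle point is remembering that Proposition~\ref{prop:hassesigma} applies in both directions (it moves $\partial^{[t]}$ across $\sigma^t$ and demotes it to $\partial^{[0]}$, and then later promotes $\partial^{[0]}$ back to $\partial^{[t]}$). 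The admissibility hypotheses (D1)--(D3) are not needed for this particular statement; only the commutation formula with $\sigma$ is used.
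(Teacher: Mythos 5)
Your argument is correct and is essentially the paper's own proof: write $x=\sigma^t(y)$, use Proposition~\ref{prop:hassesigma} to trade $\partial^{[t]}\circ\sigma^t$ for $\sigma^t\circ\partial^{[0]}$, apply the ordinary Leibniz rule for $\partial^{[0]}=\partial^1$, and push back through $\sigma^t$. The only minor inaccuracy is your closing remark that admissibility is not used: Proposition~\ref{prop:hassesigma} itself is proved by applying the injective map $\phi^r$ and invoking Proposition~\ref{prop:hasse1}, so the commutation of $\partial$ with $\phi$ (condition (D2)) is implicitly required.
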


\begin{proof}
  Write $x = \sigma^t(y)$ for some $y \in R$. By Proposition~\ref{prop:hassesigma}, we have
  \[
    \partial^{[t]}(\sigma^t(y^n)) = \sigma^t(\partial^{[0]}(y^n)) = \sigma^t(n y^{n-1} \partial^{[0]}(y)) = n x^{n-1} \partial^{[t]}(\sigma^t(y)). \qedhere
  \]
\end{proof}

\begin{proposition} \label{prop:kerhasse1}
We have $\bigcap_{\partial \in \cD} \ker(\partial^{[0]}) = \im(\sigma)$.
\end{proposition}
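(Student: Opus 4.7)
The plan is to prove both inclusions by essentially unwinding the definitions; the real content of the statement has already been packaged into axiom (D3) and Proposition~\ref{prop:hassesigma}, so the work is to check that everything lines up.

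For the inclusion $\im(\sigma) \subseteq \bigcap_{\partial \in \cD} \ker(\partial^{[0]})$, I would apply Proposition~\ref{prop:hassesigma} with $r=1$ and $s=0$: for any $\partial \in \cD$ and any $y \in R$, the case $r > s$ of that proposition gives $\partial^{[0]}(\sigma(y)) = 0$. This is a one-line verification.

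For the reverse inclusion, suppose $x \in R$ satisfies $\partial^{[0]}(x) = 0$ for every $\partial \in \cD$. The first step is to reduce to the homogeneous case: by (D1), each $\partial \in \cD$ has negative degree, so $\partial^{[0]} = \partial^1$ is a graded map, and hence annihilates $x$ if and only if it annihilates every homogeneous component of $x$. Since $\sigma$ sends degree $d$ to degree $pd$, the subspace $\im(\sigma)$ is graded, so it suffices to prove $x \in \im(\sigma)$ when $x$ is homogeneous. Once $x$ is homogeneous, the statement is exactly the specialization of (D3) to $s=0$: the empty sum $\sum_{j=1}^{0} \epsilon_j \im(\phi)$ equals $\{0\}$, so the hypothesis of (D3) becomes $\partial^{[0]}(x)=0$ for all $\partial \in \cD$, and the conclusion becomes $x \in \im(\sigma)$.

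There is no real obstacle here; the proposition is essentially the $s=0$ instance of (D3), combined with the observation from Proposition~\ref{prop:hassesigma} that $\sigma$-images are automatically killed by $\partial^{[0]}$. The only small care needed is the grading reduction, which is a formality given (D1) and the fact that $\sigma$ scales degrees by $p$.
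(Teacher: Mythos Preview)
Your proof is correct and follows the same approach as the paper: one inclusion via Proposition~\ref{prop:hassesigma}, the other via (D3) with $s=0$. You are slightly more explicit than the paper in spelling out the reduction to homogeneous $x$ (which is indeed needed since (D3) is stated only for homogeneous elements), but otherwise the arguments coincide.
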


\begin{proof}
We have $\im(\sigma) \subset \ker(\partial^{[0]})$ for any $\partial \in \cD$ by Proposition~\ref{prop:hassesigma}. Conversely, suppose that $\partial^{[0]}(x)=0$ for all $\partial \in \cD$. By (D3) with $s=0$, we find $x \in \im(\sigma)$.
\end{proof}

\begin{proposition} \label{prop:reduced}
The ring $R$ is reduced.
\end{proposition}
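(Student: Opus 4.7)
The plan is to apply Proposition~\ref{prop:pfac-reduced}, which reduces the claim to showing that $\sigma$ is injective. I would in fact prove the stronger statement that for every $r \ge 1$ and every $d \ge 0$, the iterate $\sigma^r$ is injective on the homogeneous piece $R_d$. The argument goes by induction on $d$. The base case $d = 0$ is immediate since (F2) says $\sigma$ restricts to the identity on the field $R_0$, so every power of $\sigma$ is the identity there.

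For the inductive step, I would fix $d > 0$, assume the claim at all smaller degrees, and let $x \in R_d$ satisfy $\sigma^r(x) = 0$. Applying $\phi^r$ and using (F3) gives $x^{p^r} = 0$. For any $\partial \in \cD$, applying $\partial^{[r]}$ to $x^{p^r} = 0$ and invoking Proposition~\ref{prop:hasse1} with the two parameters equal yields $(\partial^{[0]}(x))^{p^r} = 0$. Injectivity of $\phi$ rearranges this to $\sigma^r(\partial^{[0]}(x)) = 0$. By (D1), $\partial^{[0]}(x)$ lives in degree strictly less than $d$, so the induction hypothesis forces $\partial^{[0]}(x) = 0$ for every $\partial \in \cD$.

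Next I would invoke Proposition~\ref{prop:kerhasse1} to conclude $x \in \im(\sigma)$. If $p \nmid d$, then $\im(\sigma) \cap R_d = 0$ (since $\sigma$ sends $R_k$ into $R_{pk}$), and so $x = 0$. If $p \mid d$, I write $x = \sigma(y)$ with $y \in R_{d/p}$; then $\sigma^{r+1}(y) = \sigma^r(x) = 0$, and since $d/p < d$ the induction hypothesis applied to $y$ with parameter $r+1$ yields $y = 0$, hence $x = 0$.

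The main subtlety is choosing the right inductive hypothesis: the naive version ``$\sigma$ is injective on $R_{d'}$ for $d' < d$'' does not close the argument, because after deducing $x \in \im(\sigma)$ one is left with a lower-degree element $y$ satisfying only $\sigma^{r+1}(y) = 0$, not $\sigma(y) = 0$. Strengthening the induction to handle all powers of $\sigma$ simultaneously fixes this; everything else is a direct use of (D1), Proposition~\ref{prop:kerhasse1}, and the Frobenius--Hasse compatibility in Proposition~\ref{prop:hasse1}.
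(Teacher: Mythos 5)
Your proof is correct and takes essentially the same approach as the paper: both reduce to injectivity of $\sigma$ via Proposition~\ref{prop:pfac-reduced}, then descend on degree using the fact that $\sigma^r(x)=0$ forces $\sigma^r(\partial^{[0]}(x))=0$ for every $\partial\in\cD$, together with Proposition~\ref{prop:kerhasse1}. The paper packages the descent as a minimal-degree counterexample with ``$\sigma^k(x)=0$ for \emph{some} $k$,'' which sidesteps the need for your strengthened inductive hypothesis quantifying over all powers of $\sigma$; the two formulations are interchangeable.
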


\begin{proof}
We must show that $\sigma$ is injective (Proposition~\ref{prop:pfac-reduced}). Suppose not, and let $x \in R$ be a non-zero homogeneous element of minimal degree with $\sigma^k(x)=0$ for some $k$. Then $x \not\in \im(\sigma)$: indeed, if $x=\sigma(y)$ then $y$ would have lower degree and $\sigma^{k+1}(y)=0$, contradicting the choice of $x$. Since $x \not\in \im(\sigma)$, there exists $\partial \in \cD$ with $\partial^{[0]}(x) \ne 0$ by Proposition~\ref{prop:kerhasse1}. But then $0=\partial^{[k]}(\sigma^k(x)) = \sigma^k(\partial^{[0]}{x})$ (Proposition~\ref{prop:hassesigma}), and $\partial^{[0]}(x)$ has lower degree than $x$, again contradicting the choice of $x$. It follows that $\sigma$ is injective.
\end{proof}

\begin{proposition} \label{prop:hasselev}
Let $x \in R$. The following are equivalent:
\begin{enumerate}[\indent \rm (a)]
\item $x$ has level $\ge t$.
\item $\partial^{[r]}(x)=0$ for all $0 \le r < t$ and all $\partial \in \cD$.
\end{enumerate}
\end{proposition}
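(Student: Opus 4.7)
The plan is to prove the equivalence by induction on $t$, using Proposition~\ref{prop:hassesigma}, Proposition~\ref{prop:kerhasse1}, and the injectivity of $\sigma$ (Proposition~\ref{prop:reduced} and Proposition~\ref{prop:pfac-reduced}) as the main tools. The case $t=0$ is trivial since every element has level $\ge 0$ (as $\sigma^0=\id$), and the case $t=1$ is exactly Proposition~\ref{prop:kerhasse1}. So the work is in the inductive step.

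For the direction (a) $\Rightarrow$ (b), if $x$ has level $\ge t$ we can write $x = \sigma^t(y)$ for some $y \in R$, and then Proposition~\ref{prop:hassesigma} immediately gives $\partial^{[r]}(\sigma^t(y)) = 0$ whenever $r < t$, for any $\partial \in \cD$. This direction is one line and requires no induction.

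For the direction (b) $\Rightarrow$ (a), I would proceed by induction on $t$. Assume the equivalence holds for $t$, and suppose $x$ satisfies $\partial^{[r]}(x)=0$ for all $0 \le r < t+1$ and all $\partial \in \cD$. Since these vanishings hold in particular for $r<t$, the inductive hypothesis gives $x \in \im(\sigma^t)$, so write $x=\sigma^t(y)$ for some $y \in R$. The goal is then to show $y \in \im(\sigma)$, which by Proposition~\ref{prop:kerhasse1} reduces to showing $\partial^{[0]}(y)=0$ for every $\partial \in \cD$. This is where the remaining hypothesis $\partial^{[t]}(x)=0$ enters: by Proposition~\ref{prop:hassesigma},
\[
0 = \partial^{[t]}(x) = \partial^{[t]}(\sigma^t(y)) = \sigma^t(\partial^{[0]}(y)).
\]
Since $R$ is reduced (Proposition~\ref{prop:reduced}), $\sigma$ is injective (Proposition~\ref{prop:pfac-reduced}), hence so is $\sigma^t$, giving $\partial^{[0]}(y)=0$ as needed.

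No step here looks like a serious obstacle; the main subtlety is simply noticing that one must invoke reducedness to cancel the outer $\sigma^t$, and that the needed reducedness is already available from the admissibility of $\cD$ via Proposition~\ref{prop:reduced}. The proof is short and essentially a clean application of the two structural propositions (Proposition~\ref{prop:hassesigma} and Proposition~\ref{prop:kerhasse1}) already established.
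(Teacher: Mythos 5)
Your proposal is correct and uses exactly the same ingredients as the paper — Proposition~\ref{prop:hassesigma}, Proposition~\ref{prop:kerhasse1}, and the injectivity of $\sigma$ (via Propositions~\ref{prop:pfac-reduced} and~\ref{prop:reduced}) — the only difference being that you organize the direction (b) $\Rightarrow$ (a) as an induction on $t$, whereas the paper runs the identical one-step ``level boost'' as a proof by contradiction (assume $x$ has level exactly $s<t$, write $x=\sigma^s(y)$, and show $x\in\im(\sigma^{s+1})$). The two framings are interchangeable and equally clean.
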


\begin{proof}
Proposition~\ref{prop:hassesigma} shows that (a) implies (b). Conversely, suppose that (b) holds, and say that $x$ has level $s<t$. Write $x=\sigma^s(y)$. Then $0=\partial^{[s]}(x)=\sigma^s(\partial^{[0]}(y))$ for all $\partial \in \cD$. Since $\sigma$ is injective (Propositions~\ref{prop:pfac-reduced} and~\ref{prop:reduced}), we have $\partial^{[0]}(y)=0$. Since this holds for all $\partial$, we have $y=\sigma(z)$ for some $z$ by Proposition~\ref{prop:kerhasse1}, and so $x=\sigma^{s+1}(z)$, contradicting the assumption that $x$ has level $s$. Thus $x$ has level $\ge t$.
\end{proof}

\begin{corollary} \label{cor:partial-allr}
If $x \in R$ is homogeneous of positive degree and $\partial^{[r]}(x)=0$ for all $r \ge 0$ and all $\partial \in \cD$, then $x=0$.
\end{corollary}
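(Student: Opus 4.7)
The plan is to combine Proposition~\ref{prop:hasselev} with the finiteness remark following Definition~\ref{defn:level}. By Proposition~\ref{prop:hasselev}, the hypothesis that $\partial^{[r]}(x)=0$ for all $r \ge 0$ and all $\partial \in \cD$ implies that $x$ has level $\ge t$ for every $t \ge 0$, i.e., $x$ lies in $F^r R$ for all $r$. On the other hand, if $x$ is nonzero and homogeneous of positive degree $d$, the remark after Definition~\ref{defn:level} shows that its level is at most the $p$-adic valuation of $d$, which is finite. This contradiction forces $x=0$.

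Concretely, I would write: suppose for contradiction that $x \ne 0$, let $d=\deg(x)>0$, and let $t$ be the largest integer with $p^t \mid d$. Since $\sigma$ multiplies degrees by $p$, any element of $\im(\sigma^{t+1})$ in degree $d$ would have to have degree divisible by $p^{t+1}$, which is impossible; hence $x \notin F^{t+1}R$, so $x$ has level at most $t$. But Proposition~\ref{prop:hasselev} applied with every $t' \ge 0$ shows that $x$ has level $\ge t'$ for all $t'$, contradicting the previous bound.

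There is really no obstacle here, since all the work is already done in Proposition~\ref{prop:hasselev} (which handles the equivalence between vanishing of low-order Hasse derivatives and level) and in the remark bounding level by the $p$-adic valuation of the degree. The corollary is simply the observation that these two facts together prevent a nonzero homogeneous element of positive degree from being annihilated by the Hasse derivations at every order.
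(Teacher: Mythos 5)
Your proof is correct and takes exactly the same route as the paper: invoke Proposition~\ref{prop:hasselev} to get level $\ge t$ for all $t$, then use the finiteness of the level filtration (the remark after Definition~\ref{defn:level}) to conclude $x=0$. You simply spell out the degree-divisibility argument that the paper leaves as a parenthetical reminder.
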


\begin{proof}
By Proposition~\ref{prop:hasselev}, $x$ has level $\ge t$ for all $t$, and so $x=0$. (Recall that $\cF^t R_d=0$ for $t \gg 0$ and $d>0$.)
\end{proof}

\begin{proposition} \label{prop:levelder}
Let $\epsilon_1, \ldots, \epsilon_s \in R_0$. Suppose that $x \in R$ has level $\ge r$, with $r \ge 1$, and $\partial^{[r]}(x) \in \sum_{j=1}^s \epsilon_j R^p$ for all $\partial \in \cD$. Then $x \in \im(\sigma^{r+1}) + \sum_{j=1}^s \epsilon_j \im(\sigma^r \phi)$. In particular, $x \in \im(\sigma^{r+1})+\sum_{j=1}^s \epsilon_j R^p$.
\end{proposition}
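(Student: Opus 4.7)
The plan is to reduce to condition (D3) by stripping off one copy of $\sigma$ at a time, using Proposition~\ref{prop:pfac2} (which requires $R$ reduced — supplied by Proposition~\ref{prop:reduced}).

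First, since $x$ has level $\ge r$, I write $x = \sigma^r(y)$ for some $y \in R$. By Proposition~\ref{prop:hassesigma}, for every $\partial \in \cD$ we have $\partial^{[r]}(x) = \sigma^r(\partial^{[0]}(y))$. The hypothesis thus reads
\[
\sigma^r(\partial^{[0]}(y)) \in \sum_{j=1}^s \epsilon_j R^p \subseteq \sum_{j=1}^s \epsilon_j \im(\phi),
\]
the last inclusion because $R^p = \im(\phi \circ \sigma) \subseteq \im(\phi)$.

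Next, I iterate Proposition~\ref{prop:pfac2}: since $R$ is reduced, each time the outer $\sigma$ can be peeled off while staying inside $\sum_j \epsilon_j \im(\phi)$. Applying this $r$ times yields $\partial^{[0]}(y) \in \sum_{j=1}^s \epsilon_j \im(\phi)$ for every $\partial \in \cD$. Now condition (D3) applies to $y$ and gives $y \in \im(\sigma) + \sum_{j=1}^s \epsilon_j \im(\phi)$. Applying $\sigma^r$ to both sides, and using $\sigma|_{R_0} = \id$ so that $\sigma^r(\epsilon_j z) = \epsilon_j \sigma^r(z)$, we conclude
\[
x = \sigma^r(y) \in \im(\sigma^{r+1}) + \sum_{j=1}^s \epsilon_j \im(\sigma^r \circ \phi),
\]
which is the main assertion.

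For the ``in particular'' clause, it suffices to check $\im(\sigma^r \circ \phi) \subseteq R^p$. Since $\sigma \circ \phi = F$, we have $\im(\sigma \circ \phi) = R^p$; and $\sigma(R^p) = \sigma(F(R)) = F(\sigma(R)) \subseteq R^p$, so $R^p$ is $\sigma$-stable. Thus $\im(\sigma^r \circ \phi) = \sigma^{r-1}(R^p) \subseteq R^p$, giving $x \in \im(\sigma^{r+1}) + \sum_{j=1}^s \epsilon_j R^p$.

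The only step that is not essentially bookkeeping is the iterated application of Proposition~\ref{prop:pfac2}; it is the hinge that trades the $r$-fold $\sigma$ on the outside for a single $\sigma$ that can then be absorbed via (D3). Everything else — writing $x = \sigma^r(y)$, commuting $\partial^{[r]}$ past $\sigma^r$, and pushing $\sigma^r$ through the $\epsilon_j$ — uses only the hypotheses already in force.
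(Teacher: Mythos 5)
Your proof is correct and follows the same strategy as the paper: write $x = \sigma^r(y)$, use Proposition~\ref{prop:hassesigma} to turn the hypothesis into a statement about $\partial^{[0]}(y)$, iterate Proposition~\ref{prop:pfac2} (with Proposition~\ref{prop:reduced} supplying reducedness) to strip off the $\sigma$'s, then apply (D3) and push $\sigma^r$ back through using $\sigma\vert_{R_0}=\id$. The only cosmetic difference is that the paper first writes $\partial^{[r]}(x) = \sum \epsilon_j z_j^p = \sigma(\sum \epsilon_j \phi(z_j))$ and peels off one $\sigma$ by injectivity before iterating Proposition~\ref{prop:pfac2} $r-1$ more times, whereas you observe $\sum \epsilon_j R^p \subseteq \sum \epsilon_j \im(\phi)$ directly and iterate Proposition~\ref{prop:pfac2} all $r$ times; this is slightly more streamlined and equally valid. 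Your verification of the ``in particular'' clause (via $\sigma$-stability of $R^p$, using (F3)) is also correct; the paper leaves that step implicit.
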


\begin{proof}
Write $x=\sigma^r(y)$. Let $\partial \in \cD$ and write $\partial^{[r]}(x)=\sum_{j=1}^s \epsilon_j z_i^p$. Then
\begin{displaymath}
\sigma^r(\partial^{[0]}(y)) = \partial^{[r]}(x) = \sum_{j=1}^s \epsilon_j \sigma(\phi(z_i)).
\end{displaymath}
Since $\sigma$ is injective (Proposition~\ref{prop:pfac-reduced} and~\ref{prop:reduced}), we find $\sigma^{r-1}(\partial^{[0]}(y)) \in \sum_{j=1}^s \epsilon_j \im(\phi)$, from which it follows (by Proposition~\ref{prop:pfac2}) that $\partial^{[0]}(y) \in \sum_{j=1}^s \epsilon_j \im(\phi)$. Since this holds for all $\partial \in \cD$, we see from (D3) that $y \in \im(\sigma) + \sum_{j=1}^s \epsilon_j \im(\phi)$. Applying $\sigma^r$ yields the stated result.
\end{proof}

\section{The polynomiality theorem}

\subsection{Main theorem and initial reductions}

The following is the main theorem of this section:

\begin{theorem} \label{thm:poly}
Let $R$ be a graded ring with $R_0$ a field of characteristic~$p$. Then $R$ is a polynomial ring if and only if it admits an $F$-factorization with an admissible set of Hasse derivations.
\end{theorem}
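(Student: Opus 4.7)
The plan is to establish both directions of the equivalence. For the ``if'' direction, Example~\ref{ex:pfac} equips any polynomial ring $R=\bk[x_i]_{i\in I}$ with its standard $F$-factorization, and Example~\ref{ex:hasse} already verifies that the collection of Hasse derivatives with respect to the variables $\{x_i\}$ forms an admissible set $\cD$. This direction is therefore essentially done.

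For the nontrivial ``only if'' direction, suppose $R$ admits an $F$-factorization $(\phi,\sigma)$ and an admissible set $\cD$. Following the outline of \cite[Theorem~2.11]{stillman}, I would first pick a homogeneous minimal generating set $\{y_i\}_{i\in I}$ of $R$ as a $\bk$-algebra, obtained by lifting a homogeneous $\bk$-basis of $R_+/R_+^2$ (graded Nakayama ensures these generate $R$, and Proposition~\ref{prop:reduced} shows $R$ is reduced, which is convenient throughout). The goal is to show that $\{y_i\}$ is algebraically independent, from which polynomiality follows. I assume the contrary and take a nontrivial homogeneous polynomial identity $P(y_{i_1},\ldots,y_{i_n})=0$ of minimal total degree $d$, with secondary minimality conditions (e.g.\ fewest variables involved) to make the inductive step work.

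The core of the argument is then to apply a carefully chosen $\partial\in\cD$, in some appropriate $\partial^{[r]}$, to the identity $P(y)=0$ and produce a strictly lower-degree polynomial identity among the $\{y_i\}$, contradicting minimality. When some monomial of $P$ has an exponent not divisible by $p$, the natural move is to use $\partial^{[0]}$ (which is a genuine derivation) together with the fact that each $\partial^{[0]}(y_{i_k})$, being of strictly lower degree, is itself a polynomial in the $y_j$'s of strictly lower degree; this produces a formal lower-degree identity. When instead every monomial of $P$ has every exponent divisible by $p$, each term is a $p$-th power, and one must exploit the $F$-factorization: rewrite $(y^\beta)^p=\phi(\sigma(y^\beta))$ and invoke (F4) together with Proposition~\ref{prop:levelder} to repackage the relation in terms of lower-degree elements. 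The key structural fact about $\cD$ that drives the reduction is (D3), used together with Proposition~\ref{prop:kerhasse1} and Proposition~\ref{prop:levelder} to ``see'' when an element actually lies in $\im(\sigma)+\sum\epsilon_j\im(\phi)$.

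The main obstacle is verifying that the relation produced by differentiation is nontrivial \emph{as a formal polynomial identity}, not merely as an identity in $R$; this requires careful bookkeeping using admissibility condition (D3), which is precisely the replacement for the ``extracting $p$-th roots of scalars'' move that was available in the (semi-)perfect setting of \cite{stillman}. A secondary subtlety is that different generators $y_{i_k}$ may have different levels in the sense of Definition~\ref{defn:level}, so when computing $\partial^{[r]}$ on a monomial $y^\alpha$ one must use the level-sensitive rule of Corollary~\ref{cor:levelt} and the control on $\partial^{[r]}\circ\sigma^s$ from Proposition~\ref{prop:hassesigma}. I expect the cleanest implementation is an induction that jointly controls the degree of the hypothetical relation, the levels of the variables involved, and the structure of the coefficients $c_\alpha$ relative to the filtration $F^\bullet R$ and the spaces $\sum_j\epsilon_j\im(\phi)$.
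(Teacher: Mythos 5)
Your outline captures the broad strategy of the paper: lift a homogeneous basis of $R_+/R_+^2$ to a generating set, assume an algebraic relation, differentiate, and use (D3), (F4), and the level machinery to handle the $p$-power obstructions. That much is on track. But as written, the proposal is not yet a proof, and the place where it is thinnest is precisely the place where the real work lives.

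The naive ``take a nontrivial relation of minimal degree and apply some $\partial^{[r]}$'' scheme does not close. The paper instead proves a \emph{strictly stronger} inductive statement, $\sA_E$: for distinct $x_1,\ldots,x_n\in E$ and $\epsilon_1,\ldots,\epsilon_s\in\bk$, if $F(x_1,\ldots,x_n)\in\sum_j\epsilon_j R^p$ then $F\in\sum_j\epsilon_j\bk[X_1,\ldots,X_n]^p$. The $\epsilon_j$'s are not bookkeeping decoration; they are what allows a term like $a_n x^n$ with $p\mid n$ and $a_n\in\bk$ to be moved across the equality sign and absorbed as a new $\epsilon_{s+1}$ (Case 4 of the paper's proof), after which Lemma~\ref{lem:depkp} --- which your proposal never invokes --- forces $a_n$ into the $\bk^p$-span of the original $\epsilon_j$'s. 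Your last sentence gestures at ``controlling the coefficients relative to $\sum_j\epsilon_j\im(\phi)$,'' but without promoting this to the actual induction hypothesis, the $p$-divisible case has no escape route.

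The second missing structural ingredient is the order of the induction. The paper does not descend on the degree of the relation; it adds generators one at a time (ordered by degree, then by level), and for each new generator $x$ proves the auxiliary statement $\sB_{n,m}$ about expressions $\sum_{i=0}^n a_i x^i\in\sum_j\epsilon_j R^p$ with $a_i$ in the previously-handled subalgebra $A$, by induction on $n$ and on $\deg a_n\le m$. This single-variable reformulation is what makes the case split on $p\mid n$, $p\mid n-1$, and $m=0$ versus $m>0$ even well-posed, and it is where Corollary~\ref{cor:levelt}, Proposition~\ref{prop:levelder}, and Lemma~\ref{lem:A} (``lower degree or higher level elements already lie in $A$'') are applied. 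Until you commit to this framing --- prove $\sA_E\Rightarrow\sA_{E\cup\{x\}}$ via $\sB_{n,m}$, rather than chase a minimal global counterexample --- the proposal remains an informed sketch rather than a proof.
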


The rest of this section is devoted to the proof of the theorem. From Examples~\ref{ex:pfac} and~\ref{ex:hasse}, we see that a polynomial ring admits an $F$-factorization with an admissible set of Hasse derivations. We must prove the converse. We will do this via a series of reductions and induction arguments. Below we give two statements $\sA_\cE$ and $\sI$; their connections to the main theorem are as follows:
\[
  \sI \stackrel{\rm Lemma~\ref{lem:I=>A}}{\Longrightarrow} \sA_{\cE} \stackrel{\rm Lemma~\ref{lem:A=>Thm}}{\Longrightarrow} {\rm Theorem~\ref{thm:poly}}.
\]
The proof of $\sI$ is handled in \S\ref{sec:proof-I}.

For the rest of this section, we fix the ring $R$, an $F$-factorization $(\phi, \sigma)$ on $R$, and an admissible set $\cD$ of Hasse derivations on $R$. We put $\bk=R_0$ and write $R_+$ for the homogeneous maximal ideal of $R$.

Recall the level filtration $\cF^{\bullet} R$ on $R$ (Definition~\ref{defn:level}). We define $\cF^r(R_+/R_+^2)$ to be the image of $\cF^rR_+$. By definition, every level $r$ element of $R_+/R_+^2$ admits a level $r$ lift to $R_+$. Let $\ol{\cE}$ be a basis of $R_+/R_+^2$ consisting of homogeneous elements that is compatible with the level filtration. We write $\ol{\cE}_{d,r}$ for the set of degree $d$ level $r$ elements in $\ol{\cE}$. The compatibility with the filtration means that $\bigcup_{s \ge r} \ol{\cE}_{d,s}$ forms a basis of $\cF^r (R_+/R_+^2)_d$ for all $r$ and $d$. We let $\cE_{d,r}$ be a set of degree $d$ level $r$ elements of $R$ mapping bijectively to $\ol{\cE}_{d,r}$ under the reduction map, and we let $\cE$ be the union of these sets. Since $\cE$ lifts a basis of $R_+/R_+^2$, it generates $R$ as a $\bk$-algebra. Our goal is to show that $\cE$ is algebraically independent.

For a subset $E$ of $\cE$, consider the following statement:
\begin{itemize}
\item[$\sA_E$:] Let $x_1, \ldots, x_n \in E$ be distinct elements, and let $\epsilon_1, \ldots, \epsilon_s \in \bk$ (we allow $s=0$). Suppose that $f(x_1, \ldots, x_n) \in \sum_{j=1}^s \epsilon_j R^p$ for some polynomial $f \in \bk[X_1, \ldots, X_n]$. Then $f \in \sum_{j=1}^s \epsilon_j \bk[X_1, \ldots, X_n]^p$.
\end{itemize}
It suffices to prove this statement for $E=\cE$:

\begin{lemma} \label{lem:A=>Thm}
If $\sA_{\cE}$ holds then $\cE$ is algebraically independent.
\end{lemma}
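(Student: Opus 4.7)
The plan is to extract the lemma as the degenerate $s=0$ case of the statement $\sA_\cE$, so this is really a matter of parsing conventions rather than a substantive argument. I would argue by contradiction: assume $\sA_\cE$ holds, and suppose $\cE$ is not algebraically independent over $\bk$. Then there exist distinct elements $x_1, \ldots, x_n \in \cE$ and a nonzero polynomial $F \in \bk[X_1, \ldots, X_n]$ such that $F(x_1, \ldots, x_n) = 0$ in $R$.

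Next I would observe that when $s=0$ the empty sums $\sum_{j=1}^0 \epsilon_j R^p$ and $\sum_{j=1}^0 \epsilon_j \bk[X_1, \ldots, X_n]^p$ are both $\{0\}$. Thus the hypothesis of $\sA_\cE$ at $s=0$ is exactly the condition $F(x_1, \ldots, x_n) = 0$, and its conclusion is exactly $F = 0$. Applying $\sA_\cE$ to the relation produced in the previous paragraph therefore forces $F=0$, contradicting the choice of $F$ as nonzero.

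I do not anticipate any genuine obstacle; the purpose of this lemma is purely to reduce the remainder of the proof of Theorem~\ref{thm:poly} to verifying $\sA_\cE$, where the parameters $\epsilon_j$ and the ambient $p$th-power coset structure give enough flexibility to run an induction. The only care needed is to note that $\sA_E$ is stated for arbitrary (not necessarily homogeneous) polynomials $F$, so no decomposition into graded pieces is required before invoking it.
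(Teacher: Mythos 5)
Your argument is correct and is essentially the same as the paper's: both specialize $\sA_{\cE}$ to $s=0$, where the empty sums degenerate to $\{0\}$, so the hypothesis becomes $F(x_1,\dots,x_n)=0$ and the conclusion becomes $F=0$. The only cosmetic difference is that you phrase it as a proof by contradiction, whereas the paper applies the implication directly.
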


\begin{proof}
Suppose that $f(x_1, \ldots, x_n)=0$ is an algebraic relation between distinct elements of $\cE$. By $\sA_{\cE}$ with $s=0$, we find $f=0$. Thus $\cE$ is algebraically independent.
\end{proof}

We will prove $\sA_E$ for all $E$ by induction on $E$. We have to proceed somewhat carefully in this induction. The precise inductive statement is the following:
\begin{itemize}
\item[$\sI$:] Fix $d$ and $t$ and let $E$ be a subset of $\cE$ satisfying the following:
\begin{displaymath}
\cE_{<d, \bullet} \cup \cE_{d, >t} \subset E \subset \cE_{<d, \bullet} \cup \cE_{d, \ge t}
\end{displaymath}
Let $x \in \cE_{d,t} \setminus E$, and let $E'=E \cup \{x\}$. Then $\sA_E$ implies $\sA_{E'}$.
\end{itemize}

\begin{lemma} \label{lem:I=>A}
If $\sI$ holds then $\sA_{\cE}$ holds.
\end{lemma}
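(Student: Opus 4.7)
The plan is to reduce $\sA_{\cE}$ to a transfinite induction along a well-ordering of $\cE$ in which every successor step matches the hypothesis of $\sI$. The key initial observation is that any instance of $\sA_E$ involves only finitely many generators $x_1,\dots,x_n$, so $\sA_E$ is equivalent to $\sA_{E'}$ for every finite $E' \subset E$; in particular, if $\cE$ is exhausted by a chain of subsets $E_\alpha$ and $\sA_{E_\alpha}$ holds for every $\alpha$, then $\sA_{\cE}$ follows.

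I would well-order $\cE$ lexicographically by the triple $(d, -t, *)$, where $d$ is degree (ascending), $t$ is level (descending), and $*$ is an arbitrarily chosen well-ordering within each $\cE_{d,t}$. Under this order, every proper initial segment has the form $E = \cE_{<d,\bullet} \cup \cE_{d,>t} \cup S$ for some pair $(d,t)$ and some $S \subsetneq \cE_{d,t}$, and the least element of $\cE \setminus E$ lies in $\cE_{d,t} \setminus S$. This is exactly the configuration to which $\sI$ applies, so the successor step of the transfinite induction is handled directly by $\sI$.

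It remains to treat the extreme stages. The base case $E = \emptyset$ reduces to showing that if $c \in \bk$ belongs to $\sum_j \epsilon_j R^p$ then $c \in \sum_j \epsilon_j \bk^p$; writing $c = \sum_j \epsilon_j y_j^p$ and decomposing each $y_j$ into homogeneous components, additivity of Frobenius gives $y_j^p = \sum_d y_{j,d}^p$, and extracting the degree-zero component yields $c = \sum_j \epsilon_j y_{j,0}^p \in \sum_j \epsilon_j \bk^p$. At a limit stage $E = \bigcup_{\alpha < \lambda} E_\alpha$, $\sA_E$ follows from the $\sA_{E_\alpha}$ by the finiteness observation above. Transfinite induction then gives $\sA_E$ for every initial segment, including $E = \cE$. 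There is no real obstacle here: the whole substance of the lemma has already been pushed into $\sI$, and the only subtlety is choosing the well-ordering so that every successor step fits the precise shape demanded by $\sI$.
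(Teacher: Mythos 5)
Your argument is correct and follows essentially the same strategy as the paper: order $\cE$ by increasing degree, then decreasing level, and propagate $\sA$ along that order using $\sI$ at each successor step and the directed-union observation (that $\sA_E$ holds iff $\sA_{E'}$ holds for all finite $E' \subset E$) at limit stages. The paper packages this as nested inductions on $d$ and $t$ followed by a finite-subsets argument rather than a single transfinite induction, and it does not spell out the base case $\sA_\emptyset$ as you do (via projection to degree zero), but these are presentational differences only.
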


\begin{proof}
In this proof, we write $\sA(E)$ in place of $\sA_E$ for readability. Note that if $E$ is a directed union of some family of subsets $\{E_i\}_{i \in I}$ then $\sA(E)$ holds if and only if $\sA(E_i)$ holds for all $i \in I$. We use this several times.

We prove that $\sA(\cE_{\le d, \bullet})$ holds for all $d$ by induction on $d$. Thus suppose that $\sA(\cE_{<d,\bullet})$ holds, and let us show that $\sA(\cE_{\le d,\bullet})$ holds. To do this, we show that $\sA(\cE_{<d,\bullet} \cup \cE_{d,\ge t})$ holds for all $t$, by descending induction on $t$. For $t \gg 0$, the statement is vacuous since $\cE_{d,\ge t}$ is empty. Thus assume that $\sA(\cE_{<d,\bullet} \cup \cE_{d,>t})$ holds, and let us show that $\sA(\cE_{<d,\bullet} \cup \cE_{d,\ge t})$ holds. To do this, it suffices to show that $\sA(\cE_{<d,\bullet} \cup \cE_{d,>t} \cup \{e_1,\dots,e_r\})$ holds for all finite subsets $\{e_1,\dots,e_r\}$ of $\cE_{d,t}$. This follows inductively by applying $\sI$ with $x = e_i$ and $E = \cE_{<d,\bullet} \cup \cE_{d,>t} \cup \{e_1,\dots,e_{i-1}\}$ for $i=1,\dots,r$.
\end{proof}

\subsection{Proof of $\sI$} \label{sec:proof-I}
We now prove statement $\sI$. Fix $d$, $t$, $E$, $x$, and $E'$ as in $\sI$, and suppose $\sA_E$ holds. We will prove $\sA_{E'}$. For this, we will introduce further auxiliary statements $\sB_{n,m}$, $\sC$ and $\sC'$ below. The logical implications between these statements is as follows:
\[
\text{$\sA_E$ and $\sB_{n,m}$ for all $n$ and $m$} \stackrel{\rm Lemma~\ref{lem:B=>AE}}{\Longrightarrow} \sA_{E'}.
\]
\[
\text{($\sB_{k,\ell}$ for all $k<n$ and all $\ell$) and ($\sB_{n,k}$ for all $k<m$) and $\sC$} \Longrightarrow \sB_{n,m}
\]
\[
{\rm Lemma~\ref{lem:C'}}\Longrightarrow  \sC', \qquad \sC' \stackrel{{\rm Lemma~\ref{lem:C'=>C}}}{\Longrightarrow} \sC,
\]
In words, the first statement shows that we can deduce $\sA_{E'}$ from $\sB_{n,m}$ for all $n$ and $m$, since we have assumed $\sA_E$ holds. The second statement shows that we can prove $\sB_{n,m}$ using induction on $(n,m)$, once we have $\sC$. And the final statement shows that $\sC$ is implied by $\sC'$, which, in turn, is established by Lemma~\ref{lem:C'}.

We now start on the details. We let $A \subset R$ be the $\bk$-subalgebra generated by $E$.

\begin{lemma} \label{lem:B}
The element $x$ does not belong to $A$.
\end{lemma}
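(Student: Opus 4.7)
The plan is to argue modulo the square of the homogeneous maximal ideal $R_+^2$, exploiting the fact that $\cE$ was chosen to lift a \emph{basis} of $R_+/R_+^2$, not just a generating set. Suppose for contradiction that $x \in A$. Since $A$ is a graded $\bk$-subalgebra generated by elements of $E \subset R_+$ and $x$ is homogeneous of positive degree $d$, I would extract the degree-$d$ piece of a polynomial expression for $x$ and write
\[
x \;=\; \sum_{\alpha} c_\alpha \, e_1^{\alpha_1} \cdots e_n^{\alpha_n},
\]
where the $e_i$ are distinct elements of $E$, $c_\alpha \in \bk$, and each monomial has total $R$-degree $d$.

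Next I would pass to the quotient $R_+/R_+^2$. Because every $e_i$ lies in $R_+$, any monomial of multi-degree $|\alpha|\ge 2$ dies in the quotient, so only linear monomials survive; and a single $e_i$ can contribute to the degree-$d$ piece only when $\deg(e_i)=d$. Hence the image $\ol{x} \in R_+/R_+^2$ admits an expression
\[
\ol{x} \;=\; \sum_{\substack{e_i \in E \\ \deg(e_i)=d}} c_i \, \ol{e_i}.
\]

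Finally, I would derive a contradiction from the fact that $\ol{\cE}$ is a basis of $R_+/R_+^2$. By hypothesis $x \in \cE_{d,t}\setminus E$, and the reduction map $\cE \to \ol{\cE}$ is a bijection by construction, so $\ol{x} \in \ol{\cE}_{d,t}$ is an element of the basis $\ol{\cE}$ that is distinct from every $\ol{e_i}$ appearing on the right. Thus the displayed equation would express $\ol{x}$ as a nontrivial $\bk$-linear combination of other elements of $\ol{\cE}$, violating linear independence.

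There really is no serious obstacle in this argument; it is almost entirely bookkeeping. The only place any care is required is in verifying that in reducing a degree-$d$ element of $A$ modulo $R_+^2$, exactly the ``linear in degree-$d$ generators'' part survives — and this is immediate from the grading together with $E \subset R_+$. The lemma is essentially just recording that, because $\cE$ lifts a \emph{basis} of $R_+/R_+^2$, no single element of $\cE$ can be reconstructed from the others as a $\bk$-algebra element.
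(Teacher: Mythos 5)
Your proof is correct and matches the paper's argument: assume $x\in A$, reduce a polynomial expression for $x$ in the generators $E$ modulo $R_+^2$, observe that only the linear (degree-$d$) terms survive, and then contradict the linear independence of $\ol{\cE}$ in $R_+/R_+^2$ using $x\notin E$. The extra bookkeeping about extracting the degree-$d$ piece is harmless and just makes explicit what the paper's one-line reduction implies.
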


\begin{proof}
Suppose $x$ could be expressed as a polynomial in the elements in $E$. Reducing this expression modulo $R_+^2$, we would find that $x$ belongs to the $\bk$-span of $E$. But this contradicts the linear independence of the set $\cE$.
\end{proof}

\begin{lemma} \label{lem:A}
Let $y \in R$ be homogeneous. Suppose $y$ has degree $<d$, or degree $d$ and level $>t$. Then $y \in A$.
\end{lemma}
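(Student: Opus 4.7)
The plan is to establish the lemma by reducing to two sub-claims, proved in order: first, every homogeneous element of degree strictly less than $d$ lies in $A$; second, every homogeneous element of degree exactly $d$ and level strictly greater than $t$ lies in $A$. Both use that $\cE$ lifts a homogeneous basis of $R_+/R_+^2$ compatible with the level filtration, together with a graded Nakayama-style descent argument. Note that the hypothesis on $E$ gives $\cE_{<d,\bullet}\cup \cE_{d,>t}\subset E$, so it suffices to exhibit $y$ as a polynomial expression in elements drawn from this union.

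For the first sub-claim, I would argue by strong induction on the degree $d'$ of $y$, for $0\le d'<d$. The base case $d'=0$ is immediate since $y\in \bk\subset A$. For the inductive step, since $\bigcup_s \ol{\cE}_{d',s}$ is a basis of $(R_+/R_+^2)_{d'}$, write $y\bmod R_+^2 = \sum_i \lambda_i \bar z_i$ with $z_i\in \cE_{d',\bullet}\subset \cE_{<d,\bullet}\subset E$. Each $z_i$ lies in $A$, and the difference $y-\sum_i \lambda_i z_i$ lies in $R_+^2\cap R_{d'}$, which is spanned by products of homogeneous elements of positive degrees summing to $d'$. Every factor in every such product has degree $<d'$, hence lies in $A$ by the inductive hypothesis, so $y\in A$.

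For the second sub-claim, suppose $y\in R_d$ is homogeneous of level $>t$, so $y\in F^{t+1}R_d$. Then the image of $y$ in $R_+/R_+^2$ lies in $F^{t+1}(R_+/R_+^2)_d$, which by the compatibility of $\ol{\cE}$ with the level filtration is spanned by $\bigcup_{s>t}\ol{\cE}_{d,s}$. Write $y\bmod R_+^2 = \sum_i \lambda_i \bar z_i$ with $z_i\in \cE_{d,>t}\subset E\subset A$. The difference $y-\sum_i\lambda_i z_i$ lies in $R_+^2\cap R_d$, so it is a sum of products of homogeneous elements of positive degree with total degree $d$; in particular each factor has degree strictly less than $d$. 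By the first sub-claim each factor lies in $A$, hence $y\in A$.

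I do not anticipate a genuine obstacle here: the statement is essentially a bookkeeping consequence of the choice of the generating set $\cE$ made just before the lemma. The only point requiring a bit of care is distinguishing the linear part of the decomposition in $R_+/R_+^2$ (which is handled by the filtration-compatibility of $\ol{\cE}$) from the $R_+^2$ remainder (which drops in degree and is absorbed by the inductive hypothesis). The admissibility of $\cD$ and the $F$-factorization structure play no role in this lemma; they enter only through the construction of $\cE$ and the existence of the level filtration.
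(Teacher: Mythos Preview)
Your proposal is correct and follows essentially the same approach as the paper: induct on degree for the first case, use the filtration-compatibility of $\ol{\cE}$ for the second, and in both cases absorb the $R_+^2$ remainder by writing it as a sum of products of strictly lower-degree homogeneous elements. The paper's proof is slightly terser but structurally identical.
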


\begin{proof}
First suppose $y$ has degree $e<d$. Then the image of $y$ in $R_+/R_+^2$ is a linear combination of elements of $\ol{\cE}_{e,\bullet}$. Thus if $y'$ is the corresponding linear combination of elements of $\cE_{e,\bullet}$ then $y-y' \in R_+^2$, that is, we can write $y-y'=\sum_{i=1}^n a_i b_i$ with $a_i$ and $b_i$ of degree $<e$. By induction on $e$, the elements $a_i$ and $b_i$ belong to $A$, while $y'$ belongs to $A$ by definition. Thus $y \in A$, as required.

Now suppose that $\deg(y)=d$ and $y$ has level $>t$. Then the image of $y$ in $R_+/R_+^2$ is a linear combination of elements of $\ol{\cE}_{d,>t}$. Again, taking $y'$ to be the corresponding linear combination of elements of $\cE_{d,>t}$, we see that $y-y' \in R_+^2$. Thus, the same reasoning shows that $y \in A$.
\end{proof}

By Lemma~\ref{lem:A}, we see that $\partial^k(x) \in A$ for any $\partial \in \cD$ and $k>0$, as $\partial^k(x)$ has degree $<d$. It follows from Proposition~\ref{prop:hassepower} that for $k \ge 1$, $n \ge 1$, and $\partial \in \cD$, the element $\partial^k(x^n)$ can be expressed as a polynomial in $x$ of degree $\le n-1$ with coefficients in $A$. We use this constantly and without reference in what follows.

Consider the following statement:
\begin{itemize}
\item[$\sB_{n,m}$:] Let $a_0, \ldots, a_n \in A$ with $\deg(a_n) \le m$ and let $\epsilon_1, \ldots, \epsilon_s \in \bk$. Suppose that $\sum_{i=0}^n a_i x^i \in \sum_{j=1}^s \epsilon_j R^p$. Then $a_i \in \sum_{j=1}^s \epsilon_j R^p$ for all $i$, and $a_i=0$ if $p \nmid i$.
\end{itemize}
We note that in the conclusion it would be equivalent to state that $a_i \in \sum_{j=1}^s \epsilon_j A^p$, since $\sA_E$ holds.

\begin{lemma} \label{lem:B=>AE}
Suppose $\sB_{n,m}$ holds for all $n$ and $m$. Then $\sA_{E'}$ holds.
\end{lemma}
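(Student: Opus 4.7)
The plan is a bookkeeping reduction that separates the role of the new element $x$ from that of the previously treated elements of $E$. Suppose we are given distinct $x_1,\dots,x_n\in E'$, scalars $\epsilon_1,\dots,\epsilon_s\in\bk$, and a polynomial $F\in\bk[X_1,\dots,X_n]$ with $F(x_1,\dots,x_n)\in\sum_{j=1}^s\epsilon_jR^p$. If $x$ does not appear among the $x_i$, then all of them lie in $E$ and $\sA_E$ yields the conclusion directly. Otherwise I relabel so that $x_n=x$ and expand
\[
F \;=\; \sum_{i=0}^{N} a_i(X_1,\dots,X_{n-1})\,X_n^{i}, \qquad a_i\in\bk[X_1,\dots,X_{n-1}],
\]
where $N=\deg_{X_n}F$. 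Setting $b_i:=a_i(x_1,\dots,x_{n-1})\in A$, the evaluation hypothesis becomes $\sum_i b_ix^{i}\in\sum_j\epsilon_jR^p$.

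Next I invoke $\sB_{N,m}$ with any $m\ge\deg b_N$; this is permitted since we are assuming $\sB_{n,m}$ for all $n,m$. It yields $b_i\in\sum_j\epsilon_jR^p$ for every $i$, and moreover $b_i=0$ whenever $p\nmid i$. To lift these conclusions back to the polynomial ring $\bk[X_1,\dots,X_{n-1}]$, I apply $\sA_E$ to each polynomial $a_i$, regarded as a polynomial evaluated on the elements $x_1,\dots,x_{n-1}$ of $E$: the first statement upgrades to $a_i\in\sum_j\epsilon_j\bk[X_1,\dots,X_{n-1}]^p$, and the special case $s=0$ of $\sA_E$ (algebraic independence of the $x_j$ that actually appear) promotes $b_i=0$ to $a_i=0$ for $p\nmid i$. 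Writing $a_{pk}=\sum_j\epsilon_jc_{pk,j}^{\,p}$ and assembling,
\[
F \;=\; \sum_{k\ge 0} a_{pk}\,X_n^{pk} \;=\; \sum_{j=1}^s \epsilon_j\Bigl(\sum_{k\ge 0} c_{pk,j}\,X_n^{k}\Bigr)^{p} \;\in\; \sum_{j=1}^s \epsilon_j\,\bk[X_1,\dots,X_n]^{p},
\]
which is exactly $\sA_{E'}$.

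There is essentially no obstacle in this particular step: the lemma is a clean division of labor in which $\sB_{n,m}$ is tailored precisely to control polynomial expressions in the single new element $x$ with coefficients in $A$, while $\sA_E$ handles the polynomial dependence on the previously treated elements of $E$. The real work will come in the separate verification of $\sB_{n,m}$ for all $n$ and $m$, which I would expect to proceed by (double) induction on $(n,m)$: one differentiates the relation $\sum_i a_ix^{i}\in\sum_j\epsilon_jR^p$ by a Hasse derivation $\partial\in\cD$ chosen to isolate $x$, then uses Proposition~\ref{prop:hassepower}, Corollary~\ref{cor:levelt}, Proposition~\ref{prop:levelder}, and axiom (D3) to descend either the $x$-degree $n$ or the top coefficient degree $m$ to a strictly smaller instance.
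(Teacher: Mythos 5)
Your proof is correct and takes essentially the same approach as the paper's: write $F$ as a polynomial in $X_n$ with coefficients in $\bk[X_1,\dots,X_{n-1}]$, apply $\sB_{N,\bullet}$ to the evaluated coefficients to get membership in $\sum_j\epsilon_jR^p$ (and vanishing when $p\nmid i$), and then use $\sA_E$ to lift this back to the polynomial ring. You add the minor clarification of explicitly dispatching the case where $x$ does not appear among the arguments, which the paper subsumes under its ``without loss of generality.''
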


\begin{proof}
Let $x_1, \ldots, x_k$ be distinct elements of $E'$ and suppose $f(x_1, \ldots, x_k) \in \sum_{j=1}^s \epsilon_j R^p$. Without loss of generality, we may assume $x_k=x$ and $x_1, \ldots, x_{k-1} \in E$. Write $f(X_1, \ldots, X_k) = \sum_{i=0}^n g_i(X_1, \ldots, X_{k-1}) X_k^i$ for polynomials $g_i$. Then $\sum_{i=0}^n g_i(x_1, \ldots, x_{k-1}) x^i \in \sum_{j=1}^s \epsilon_j R^p$. By $\sB_{n,\bullet}$, we see that $g_i(x_1, \ldots, x_{k-1}) \in \sum_{j=1}^s \epsilon_j R^p$ for all $i$ and $g_i(x_1, \ldots, x_{k-1})=0$ for $p \nmid i$. By $\sA_E$, we see that $g_i \in \sum_{j=1}^s \epsilon_j \bk[X_1, \ldots, X_{k-1}]^p$ for all $i$, and (by taking $s=0$) $g_i=0$ for $p \nmid i$. It follows that $f \in \sum_{j=1}^s \epsilon_j \bk[X_1, \ldots, X_k]^p$, and so $\sA_{E'}$ holds.
\end{proof}

We now prove the statement $\sB_{n,m}$ for all $n$ and $m$. We do this by induction on $n$ and $m$. It is clear that $\sB_{0,m}$ holds for all $m$. We now fix $n>0$ and $m \ge 0$ and assume that $\sB_{<n,\bullet}$ and $\sB_{n,<m}$ hold, and we will prove that $\sB_{n,m}$ holds. To this end, fix $a_0, \ldots, a_n \in A$ with $\deg(a_n) \le m$ and $\epsilon_1, \ldots, \epsilon_s\in \bk$ such that
\begin{equation} \label{eq1}
\sum_{i=0}^n a_i x^i \in \sum_{j=1}^s \epsilon_j R^p.
\end{equation}
We must prove:
\begin{itemize}
\item[($\sC$)] We have $a_i \in \sum_{j=1}^s \epsilon_j R^p$ for all $i$, and $a_i=0$ for $p \nmid i$.
\end{itemize}
Clearly, if we do this then we will have established $\sB_{n,m}$, and the theorem will follow. Without loss of generality, we assume $\epsilon_1, \ldots, \epsilon_s$ to be linearly independent over $\bk^p$. Before tackling $\sC$, we make one simple reduction. Consider the following statement:
\begin{itemize}
\item[($\sC'$)] We have $a_n \in \sum_{j=1}^s \epsilon_j R^p$ and $a_n=0$ if $p \nmid n$.
\end{itemize}
The following lemma shows it suffices to prove this statement:

\begin{lemma} \label{lem:C'=>C}
If $\sC'$ holds then $\sC$ holds.
\end{lemma}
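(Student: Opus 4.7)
The plan is to reduce the full statement $\sC$ to the special statement $\sC'$ about the top coefficient $a_n$ by subtracting off the top term and invoking the outer inductive hypothesis $\sB_{<n,\bullet}$.

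First I would split on the divisibility of $n$ by $p$. If $p\nmid n$, then $\sC'$ already tells us $a_n=0$, so the relation~\eqref{eq1} becomes $\sum_{i=0}^{n-1} a_i x^i \in \sum_{j=1}^s \epsilon_j R^p$. Since this is a polynomial in $x$ of degree $<n$ with coefficients in $A$, the inductive hypothesis $\sB_{n-1,\bullet}$ applies and yields $a_i \in \sum_j \epsilon_j R^p$ for all $i<n$, together with $a_i=0$ for $p\nmid i$. Combined with $a_n=0$, this gives $\sC$.

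Next I would handle the case $p \mid n$. By $\sC'$, we can write $a_n = \sum_{j=1}^s \epsilon_j c_j^p$ for some $c_j \in R$. Since $p \mid n$, we have $x^n = (x^{n/p})^p$, and therefore
\[
  a_n x^n \;=\; \sum_{j=1}^s \epsilon_j c_j^p (x^{n/p})^p \;=\; \sum_{j=1}^s \epsilon_j (c_j x^{n/p})^p \;\in\; \sum_{j=1}^s \epsilon_j R^p.
\]
Subtracting this from~\eqref{eq1} gives $\sum_{i=0}^{n-1} a_i x^i \in \sum_{j=1}^s \epsilon_j R^p$, to which the inductive hypothesis $\sB_{n-1,\bullet}$ again applies, yielding $a_i \in \sum_j \epsilon_j R^p$ for all $i<n$ and $a_i=0$ whenever $p\nmid i$. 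Together with the conclusion of $\sC'$ for the index $i=n$ (where the vanishing clause is vacuous since $p\mid n$), this establishes $\sC$.

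There is no real obstacle here; the lemma is essentially a bookkeeping step whose only content is the observation that when $p\mid n$, the monomial $x^n$ is itself a $p$-th power and so can be absorbed into the $\sum_j \epsilon_j R^p$ term, allowing the degree in $x$ to be lowered so that $\sB_{<n,\bullet}$ applies. The serious work of the induction is therefore concentrated entirely in the verification of $\sC'$.
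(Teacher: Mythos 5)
Your proof is correct and follows essentially the same route as the paper's: subtract the top term $a_n x^n$ (which lies in $\sum_j \epsilon_j R^p$ by $\sC'$ together with $x^n$ being a $p$th power when $p \mid n$), reduce to a degree $<n$ polynomial, and invoke $\sB_{<n,\bullet}$. The paper merely compresses the case split you spell out into a single sentence.
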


\begin{proof}
Suppose $\sC'$ holds. Combining this with \eqref{eq1}, we see that $\sum_{i=0}^{n-1} a_i x^i$ belongs to $\sum_{j=1}^s \epsilon_j R^p$. Thus by $\sB_{<n,\bullet}$, we see that $a_i$ belongs to $\sum_{j=1}^s \epsilon_j R^p$ and $a_i=0$ for $p \nmid i$, at least for $0 \le i \le n-1$. Of course, we know this for $i=n$ as well by $\sC'$. Thus $\sC$ holds.
\end{proof}

Before establishing $\sC'$, we need one auxiliary lemma:

\begin{lemma} \label{lem:depkp}
Suppose that we have an equation $\sum_{j=1}^N \delta_j y_j=0$ where the $\delta_j$ belong to $\bk$, the $y_j$ belong to $R^p$, and not all $y_j$ vanish. Then the $\delta_j$ are linearly dependent over $\bk^p$.
\end{lemma}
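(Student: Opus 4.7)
The plan is to reduce to a homogeneous situation and then induct on the common degree, using a Hasse derivation from $\cD$ to lower the degree. First I would split the relation $\sum_{j=1}^N \delta_j y_j = 0$ into its homogeneous components: since $(\sum z_e)^p = \sum z_e^p$ in characteristic $p$, the graded pieces of an element of $R^p$ again lie in $R^p$, so for each $d$ the equation $\sum_j \delta_j (y_j)_d = 0$ is of the same form, and at least one degree produces a nontrivial configuration because some $y_{j_0}$ is nonzero. So I may assume all $y_j$ are homogeneous of a common degree $d$, with some $y_{j_0} \ne 0$.

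For the base case $d = 0$: then $y_j \in R_0 \cap R^p = \bk^p$, and the equation $\sum_j \delta_j y_j = 0$ is itself a nontrivial $\bk^p$-linear dependence among the $\delta_j$. For the inductive step $d > 0$: since $R$ is reduced (Proposition~\ref{prop:reduced}) and $y_j \in R^p$, there is a unique homogeneous $z_j$ of degree $e = d/p$ with $y_j = z_j^p$ (and $p \mid d$ or else all $y_j$ vanish). Because $z_{j_0}$ is nonzero of positive degree, Corollary~\ref{cor:partial-allr} supplies some $\partial \in \cD$ and $s \ge 0$ with $\partial^{[s]}(z_{j_0}) \ne 0$.

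I then apply $\partial^{[s+1]}$ to $\sum_j \delta_j z_j^p = 0$. Writing $z_j^p = \phi(\sigma(z_j))$, using (D2) so that $\partial$ commutes with $\phi$, and then Proposition~\ref{prop:hassesigma} for the action on $\sigma(z_j)$, one computes
\[
\partial^{[s+1]}(z_j^p) = \phi\bigl(\partial^{[s+1]}(\sigma(z_j))\bigr) = \phi\bigl(\sigma(\partial^{[s]}(z_j))\bigr) = (\partial^{[s]}(z_j))^p.
\]
So the derived relation $\sum_j \delta_j (\partial^{[s]}(z_j))^p = 0$ is again of the form in the lemma: its terms lie in $R^p$, are homogeneous of degree $pe + p^{s+1}\deg(\partial) < d$ by (D1), and the $j_0$-term is nonzero by reducedness. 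The inductive hypothesis then yields the required $\bk^p$-linear dependence among the $\delta_j$. The only mild obstacle is selecting a Hasse derivation that does not simultaneously annihilate every $z_j$; Corollary~\ref{cor:partial-allr} delivers this from any one nonzero $z_{j_0}$, and the negativity of $\deg(\partial)$ ensures that the induction actually terminates.
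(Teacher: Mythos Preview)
Your proof is correct and follows essentially the same approach as the paper: reduce to the homogeneous case, induct on the common degree, and apply a suitable $\partial^{[r]}$ (guaranteed by Corollary~\ref{cor:partial-allr}) to obtain a shorter-degree relation of the same form. The only cosmetic difference is that you pass through the roots $z_j$ and the factorization $z_j^p=\phi(\sigma(z_j))$, whereas the paper applies $\partial^{[r]}$ directly to the $y_j$ and invokes Proposition~\ref{prop:hasse1} to see that $\partial^{[r]}(y_j)\in R^p$.
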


\begin{proof}
We can assume the $y_j$ all have the same degree, and we proceed by induction on the degree. If the $y_j$ have degree~0 then the statement is clear. Otherwise, we can find $\partial \in \cD$ such that $\partial^{[r]}(y_j) \ne 0$ for some $r \ge 1$ and some $j$ (Corollary~\ref{cor:partial-allr}). Applying this to the given equation yields $\sum_{j=1}^N \delta_j \partial^{[r]}(y_j)=0$. Each $\partial^{[r]}(y_j)$ is a $p$th power (Proposition~\ref{prop:hasse1}), and not all vanish. Since the degree has dropped, the result follows by induction.
\end{proof}

We now come to the heart of the argument:

\begin{lemma} \label{lem:C'}
Statement $\sC'$ holds.
\end{lemma}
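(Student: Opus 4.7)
The plan is to apply the Hasse derivations $\partial^{[r]}$ in $\cD$ to the hypothesis \eqref{eq1} for every $r \ge 0$, extract information about $\partial^{[r]}(a_n)$ using the inductive hypothesis $\sB_{n,<m}$, and then convert that information into a statement about $a_n$ itself via the tools developed in Section~4.

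The crucial first step will be to observe that applying $\partial^{[r]}$ to $\sum_i a_i x^i$ yields, after Leibniz expansion and collection of powers of $x$, another polynomial $\sum_i \tilde c_{r,i} x^i$ of degree $\le n$ with coefficients $\tilde c_{r,i} \in A$, whose top coefficient is exactly $\tilde c_{r,n} = \partial^{[r]}(a_n)$. This is because Proposition~\ref{prop:hassepower} says $\partial^l(x^i)$ for $l \ge 1$ is a polynomial in $x$ of degree $\le i-1$ with coefficients in $A$ (these coefficients being integer polynomials in $\partial^j(x)$ for $j \ge 1$, which lie in $A$ by Lemma~\ref{lem:A}), so no Leibniz cross-term contributes to $x^n$. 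Corollary~\ref{cor:partial-preserve} places this polynomial in $\sum_{j=1}^s \epsilon_j R^p$ (it vanishes when $r = 0$). Since $\deg(\tilde c_{r,n}) < \deg(a_n) \le m$, the inductive hypothesis $\sB_{n,<m}$ applies and yields
\[
\partial^{[r]}(a_n) \in \sum_{j=1}^s \epsilon_j R^p \text{ for all } r \ge 1, \qquad \partial^{[0]}(a_n) = 0,
\]
and moreover $\partial^{[r]}(a_n) = 0$ whenever $p \nmid n$.

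With this information in hand I split into two cases. If $p \nmid n$, then $\partial^{[r]}(a_n) = 0$ for every $r \ge 0$ and every $\partial \in \cD$, so Corollary~\ref{cor:partial-allr} immediately gives $a_n = 0$. If $p \mid n$, then Proposition~\ref{prop:hasselev} shows $a_n$ has level $\ge 1$, and I will iterate Proposition~\ref{prop:levelder}: whenever I have $a_n = \sigma^k(u) + w$ with $w \in \sum_{j=1}^s \epsilon_j R^p$ (starting at $k = 1$, $w = 0$), the identity $\partial^{[k]}(\sigma^k(u)) = \partial^{[k]}(a_n) - \partial^{[k]}(w)$ places $\partial^{[k]}(\sigma^k(u))$ in $\sum_{j=1}^s \epsilon_j R^p$, so Proposition~\ref{prop:levelder} gives $\sigma^k(u) \in \im(\sigma^{k+1}) + \sum_{j=1}^s \epsilon_j R^p$, and hence $a_n \in \im(\sigma^{k+1}) + \sum_{j=1}^s \epsilon_j R^p$. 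Because the level filtration on $R_{\deg a_n}$ is finite, this iteration terminates with $a_n \in \sum_{j=1}^s \epsilon_j R^p$.

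The hardest part will be the clean leading-coefficient calculation in the first step: verifying that applying $\partial^{[r]}$, which is \emph{not} a derivation when $r \ge 1$, to the polynomial $\sum_i a_i x^i$ still produces a polynomial in $x$ whose top coefficient is exactly $\partial^{[r]}(a_n)$. All Leibniz cross-terms $\partial^k(a_i) \partial^l(x^i)$ with $l \ge 1$ must contribute only to strictly lower powers of $x$, and this is precisely what Proposition~\ref{prop:hassepower} arranges.
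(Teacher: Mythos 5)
Your proposal correctly reproduces the arguments for the paper's Cases 3 and 5, that is, the case $m>0$ (once one makes the reduction, which you implicitly use, that $\deg(a_n)=m$ is WLOG, since if $\deg(a_n)<m$ one quotes $\sB_{n,<m}$ directly). The leading-coefficient computation via Proposition~\ref{prop:hassepower} is exactly right, and your use of Corollary~\ref{cor:partial-allr} (when $p\nmid n$) and the iteration of Proposition~\ref{prop:levelder} (when $p\mid n$) matches the paper.

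However, there is a genuine gap: the argument breaks down entirely when $m=0$, i.e.\ $a_n\in\bk$, and this is actually the heart of the proof. When $a_n$ has degree zero, every $\partial^{[r]}(a_n)$ vanishes for trivial degree reasons (homogeneity of negative degree), so the information you extract is vacuous: Corollary~\ref{cor:partial-allr} requires positive degree and cannot give $a_n=0$, and the level-filtration iteration terminates instantly without conclusion since constants lie in $\im(\sigma^r)$ for every $r$. The paper devotes three separate cases to $m=0$, each needing a different idea. When $p\nmid n$ (Cases 1 and 2) one cannot attack $a_n$ directly and must instead study the coefficient $na_nx+a_{n-1}$ of $x^{n-1}$ that appears after applying $\partial^{[t]}$ (with $t$ the level of $x$, via Corollary~\ref{cor:levelt}); one shows this combination has level $>t$, hence belongs to $A$ by Lemma~\ref{lem:A}, and then $x\notin A$ (Lemma~\ref{lem:B}) forces $a_n=0$. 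When $p\mid n$ (Case 4), one moves $a_nx^n$ to the right-hand side, applies $\sB_{<n,\bullet}$ and $\sA_E$ to the remainder, and then invokes Lemma~\ref{lem:depkp} to deduce $a_n$ lies in the $\bk^p$-span of $\epsilon_1,\dots,\epsilon_s$. None of this machinery — the passage to $a_{n-1}$, the use of $x\notin A$, or Lemma~\ref{lem:depkp} — appears in your proposal, so the $m=0$ case is not addressed.
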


\begin{proof}
We proceed in five cases. In what follows, in expressions such as $a x^{n-1} + \cdots$, the $\cdots$ will always refer to an $A$-linear combination of lower powers of $x$.

\vskip.5\baselineskip
\textit{\textbf{Case 1:} $p$ does not divide $n$ or $n-1$, and $m=0$.}
Since $m=0$, the leading coefficient $a_n$ is constant. We must show that it vanishes. Recall that $t$ is the level of $x$. We first claim that $a_{n-1}$ has level $\ge t$. Let $0 \le r < t$. Applying $\partial^{[r]}$ to \eqref{eq1} for some $\partial \in \cD$, and using the fact that $\partial^{[r]}(x)=0$ (Proposition~\ref{prop:hasselev}), and thus that $\partial^{[r]}(x^n)=0$ (Corollary~\ref{cor:levelt}), we see that $\partial^{[r]}(a_{n-1}) x^{n-1} + \cdots \in \sum_{j=1}^s \epsilon_j R^p$. By $\sB_{n-1,\bullet}$ we see that $\partial^{[r]}(a_{n-1})=0$ (here we are using $p \nmid n-1$). Since this holds for all $\partial \in \cD$, we conclude that $a_{n-1}$ has level $\ge t$ (Proposition~\ref{prop:hasselev}).

Now apply $\partial^{[t]}$ to \eqref{eq1}, for some $\partial \in \cD$. Using the identity $\partial^{[t]}(x^n) = nx^{n-1} \partial^{[t]}(x)$ (Corollary~\ref{cor:levelt}), we see that the coefficient of $x^{n-1}$ in the result is $\partial^{[t]}(na_n x+a_{n-1})$. We thus have
\begin{displaymath}
\partial^{[t]}(na_n x+a_{n-1}) x^{n-1} + \cdots \in \sum_{j=1}^s \epsilon_j R^p.
\end{displaymath}
Applying $\sB_{n-1,m}$, and using $p \nmid n-1$, we see that $\partial^{[t]}(na_n x+a_{n-1})=0$. Since we already saw that $na_n x+a_{n-1}$ has level $\ge t$, this shows that $na_n x+a_{n-1}$ has level $\ge t+1$ (Proposition~\ref{prop:hasselev}). Since $na_n x+a_{n-1}$ has the same degree $d$ as $x$ but greater level, it belongs to $A$ (Lemma~\ref{lem:A}). Since $a_{n-1}$ also belongs to $A$ and $n \ne 0$ in $\bk$, we see that $a_n x$ belongs to $A$. Since $x$ does not belong to $A$ (Lemma~\ref{lem:B}), we see that $a_n=0$, which completes the proof.

\vskip.5\baselineskip
\textit{\textbf{Case 2:} $p$ does not divide $n$, $p$ does divide $n-1$, and $m=0$.}
The argument is similar to the previous case. We first treat the case $t=0$. Applying $\partial^{[0]}$ to \eqref{eq1}, for some $\partial \in \cD$, we find
\begin{displaymath}
\partial^{[0]}(na_nx+a_{n-1}) x^{n-1}+\cdots = 0.
\end{displaymath}
By $\sB_{n-1,\bullet}$ with $s=0$, we see that $\partial^{[0]}(na_nx+a_{n-1})=0$. Since this holds for all $\partial \in \cD$, we see that $na_nx+a_{n-1}$ has level $\ge 1$. As in Case 1, this implies $na_nx+a_{n-1} \in A$, from which we conclude $a_n=0$.

Now suppose $t>0$. We first claim that there is some $b \in \sum_{j=1}^s \epsilon_j R^p$ such that $a_{n-1}+b$ is homogeneous and has level $\ge t$. To begin, applying $\partial^{[0]}$ to \eqref{eq1} for some $\partial \in \cD$, we find $\partial^{[0]}(a_{n-1}) x^{n-1}+\cdots =0$, whence $\partial^{[0]}(a_{n-1})=0$ by $\sB_{n-1,\bullet}$ (with $s=0$). Since this holds for all $\partial \in \cD$, we see that $a_{n-1}$ has level $\ge 1$. Now suppose $1 \le k < t$ and we can find $b \in \sum_{j=1}^s \epsilon_j R^p$ so that $a_{n-1}+b$ is homogeneous and has level $\ge k$. We have just shown this to be the case for $k=1$, by taking $b=0$. Applying $\partial^{[k]}$ to \eqref{eq1} for some $\partial \in \cD$, we find $\partial^{[k]}(a_{n-1}) x^{n-1} + \cdots \in \sum_{j=1}^s \epsilon_j R^p$. By $\sB_{n-1,\bullet}$, we see that $\partial^{[k]}(a_{n-1}) \in \sum_{j=1}^s \epsilon_j R^p$. Thus we have $\partial^{[k]}(a_{n-1}+b) \in \sum_{j=1}^s \epsilon_j R^p$ as well. Since $a_{n-1}+b$ has level $\ge k$ and this holds for all $\partial \in \cD$, Proposition~\ref{prop:levelder} implies $a_{n-1}+b = \sigma^{k+1}(c)+b'$ for some $b' \in \sum_{j=1}^s \epsilon_j R^p$. Thus $a_{n-1}+(b-b')$ has level $\ge k+1$. Replacing $b$ with $b-b'$ and continuing, the claim follows.

Fix $b \in \sum_{j=1}^s \epsilon_j R^p$ so that $a_{n-1}+b$ has level $\ge t$. Apply $\partial^{[t]}$ to \eqref{eq1}, for some $\partial \in \cD$. We find
\begin{displaymath}
\partial^{[t]}(na_nx+a_{n-1}) x^{n-1} + \cdots \in \sum_{j=1}^s \epsilon_j R^p.
\end{displaymath}
By $\sB_{n-1,\bullet}$, we find $\partial^{[t]}(na_nx+a_{n-1}) \in \sum_{j=1}^s \epsilon_j R^p$. Of course, we also have $\partial^{[t]}(na_nx+a_{n-1}+b) \in \sum_{j=1}^s \epsilon_j R^p$ by Corollary~\ref{cor:partial-preserve} because $b \in \sum_{j=1}^s \epsilon_j R^p$. Since this holds for all $\partial \in \cD$ and $na_nx+a_{n-1}+b$ has level $\ge t$ with $t>0$, Proposition~\ref{prop:levelder} implies $na_nx+a_{n-1}+b=\sigma^{t+1}(y)+b'$ for some $b' \in \sum_{j=1}^s \epsilon_j R^p$, and we may further assume that $\sigma^{t+1}(y)$ and $b'$ are homogeneous of the same degree. Since $\sigma^{t+1}(y)$ has the same degree as $x$ but higher level, it belongs to $A$. Since $b$ and $b'$ have the same degree as $x$ and belong to $R_+^2$, they also belong to $A$. Thus we find $a_n x \in A$, and so $a_n=0$.

\vskip.5\baselineskip
\textit{\textbf{Case 3:} $p$ does not divide $n$ and $m>0$.}
If $\deg(a_n)=0$ then the result follows from $\sB_{n,<m}$, so we assume $\deg(a_n)>0$. Applying $\partial^{[r]}$ to \eqref{eq1}, for some $\partial \in \cD$ and $r \ge 0$, we find
\begin{displaymath}
\partial^{[r]}(a_n) x^n + \cdots \in \sum_{j=1}^s \epsilon_j R^p.
\end{displaymath}
By $\sB_{n,m-1}$ and the fact that $p \nmid n$, we conclude that $\partial^{[r]}(a_n)=0$. Since this holds for all $\partial \in \cD$ and all $r \ge 0$, we conclude that $a_n=0$ by Corollary~\ref{cor:partial-allr}.

\vskip.5\baselineskip
\textit{\textbf{Case 4:} $p$ divides $n$ and $m=0$.}
Let $\epsilon_{s+1}=a_n$, which belongs to $\bk$ since $m=0$. Moving the $a_n x^n$ term in \eqref{eq1} to the other side, and noting that $x^n \in R^p$ as $p \mid n$, we find
\begin{displaymath}
\sum_{i=0}^{n-1} a_i x^i \in \sum_{j=1}^{s+1} \epsilon_j R^p.
\end{displaymath}
Applying $\sB_{<n,\bullet}$, it follows that $a_i \in \sum_{j=1}^{s+1} \epsilon_j R^p$ for each $0 \le i \le n$, and $a_i=0$ for $p \nmid i$. By $\sA_E$, we in fact have $a_i \in \sum_{j=1}^{s+1} \epsilon_j A^p$. Let $b_i \in A$ be such that $a_i \in \epsilon_{s+1} b_i^p + \sum_{j=1}^s \epsilon_j R^p$. Take $b_i=0$ for $p \nmid i$, and also put $b_n=1$. Putting this into \eqref{eq1}, we find $\epsilon_{s+1} \sum_{i=0}^n b_i^p x^i \in \sum_{j=1}^s \epsilon_j R^p$. The summation on the left is $(\sum_{i=0}^n b_{i/p} x^{i/p})^p$, where the sum is over indices divisible by $p$. This sum is non-zero by $\sB_{n/p,\bullet}$ since $b_n=1$. Thus, by Lemma~\ref{lem:depkp}, and our assumption that $\{\epsilon_1,\dots,\epsilon_s\}$ is linearly independent over $\bk^p$, we see that $a_n=\epsilon_{s+1}$ is in the $\bk^p$-span of $\epsilon_1, \ldots, \epsilon_s$, and this establishes $\sC'$.

\vskip.5\baselineskip
\textit{\textbf{Case 5:} $p$ divides $n$ and $m>0$.}
If $\deg(a_n)=0$ then the result follows from $\sB_{n,<m}$, so assume $\deg(a_n)>0$. We claim that for any $r$ we can find $b \in \sum_{j=1}^s \epsilon_j R^p$ such that $a_n+b$ has level $\ge r$.

To start, apply $\partial^{[0]}$ to \eqref{eq1} for some $\partial \in \cD$. We find $\partial^{[0]}(a_n) x^n + \cdots =0$. From $\sB_{n,<m}$, it follows that $\partial^{[0]}(a_n)=0$. Since this holds for all $\partial \in \cD$, we conclude that $a_n$ has level $\ge 1$. This proves the claim for $r=1$ (take $b=0$).

Suppose now that we have found $b$ such that $a_n+b$ has level $\ge r$, with $r \ge 1$. Applying $\partial^{[r]}$ to \eqref{eq1} for some $\partial \in \cD$, we find $\partial^{[r]}(a_n) x^n + \cdots \in \sum_{j=1}^s \epsilon_j R^p$. By $\sB_{n,<m}$, we have $\partial^{[r]}(a_n) \in \sum_{j=1}^s \epsilon_j R^p$. Of course, we also have $\partial^{[r]}(a_n+b) \in \sum_{j=1}^s \epsilon_j R^p$. As this holds for all $\partial$ and $a_n+b$ has level $\ge r$, Proposition~\ref{prop:levelder} implies that $a_n+b=\sigma^{r+1}(c)+b'$ for some $b' \in \sum_{j=1}^s \epsilon_j R^p$. Thus $a_n+b-b'$ has level $\ge r+1$. The claim follows.

Take $r \gg m$, and pick $b \in \sum_{j=1}^s \epsilon_j R^p$ so that $a_n+b$ has level $\ge r$. Then $a_n+b=0$, since the level of a non-zero element of positive degree is bounded in terms of its degree. Thus $a_n \in \sum_{j=1}^s \epsilon_j R^p$, which establishes $\sC'$.
\end{proof}

\section{Applications} \label{s:app}

\subsection{Inverse limit rings}

For a (non-graded) ring $A$ and an infinite set $\cU$, we put
\begin{displaymath}
A \invl x_i \invr_{i \in \cU} = \varprojlim A[x_i]_{i \in \cV},
\end{displaymath}
where the inverse limit is taken in the category of graded rings over all finite subsets $\cV$ of $\cU$, and $A[x_i]_{i \in \cV}$ denotes the standard-graded polynomial ring in the indicated variables. Thus $A \invl x_i \invr_{i \in \cU}$ is a graded ring, and a degree $d$ element can be written uniquely in the form $\sum c_e x^e$, where the sum is over multi-indices $e$ of degree $d$ and the $c_e$ are arbitrary elements of $A$. (A multi-index is a function $e \colon \cU \to \bN$ of finite support; its degree is the sum of its values.) Fix a field $K$ and an infinite set $\cU$, and let $\bR=K \invl x_i \invr_{i \in \cU}$. The following theorem is a slight generalization of Theorem~\ref{mainthm}:

\begin{theorem} \label{thm:inverse}
The ring $\bR$ is a polynomial $K$-algebra.
\end{theorem}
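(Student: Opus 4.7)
The plan is to apply Theorem~\ref{thm:poly} when $\mathrm{char}(K) = p > 0$ and \cite[Theorem~2.2]{stillman} when $\mathrm{char}(K) = 0$. The characteristic zero case is essentially trivial: each partial derivative $\partial/\partial x_i$ is well-defined on $\bR$ (by acting term-by-term on the representation $\sum_e c_e x^e$), is homogeneous of degree $-1$, and any nonzero homogeneous $f = \sum c_e x^e$ of positive degree has some $c_e \ne 0$ with some $e_i > 0$, whereupon $\partial f/\partial x_i$ is nonzero. Thus $\bR$ has enough derivations in the sense of \cite{stillman}, and is therefore a polynomial ring.

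For $\mathrm{char}(K) = p > 0$, I would first equip $\bR$ with an $F$-factorization by the obvious formulas
\[
\phi\Bigl(\sum_e c_e x^e\Bigr) = \sum_e c_e^p x^e,\qquad
\sigma\Bigl(\sum_e c_e x^e\Bigr) = \sum_e c_e x^{pe}.
\]
These are well-defined graded ring endomorphisms compatible with the inverse limit presentation, and conditions (F1)--(F3) are immediate. For (F4), reduce to the case where $\epsilon_1, \dots, \epsilon_s \in K$ are linearly independent over $K^p$ and then run the proof of Proposition~\ref{prop:standard-fact} verbatim: writing $f = \sum_j \epsilon_j \phi(g_j) \in \im(\sigma)$ with $g_j = \sum_e b_{j,e} x^e$ and extracting coefficients, linear independence forces $b_{j,e} = 0$ whenever $p \nmid e$, so each $g_j = \sigma(h_j)$ and $f = \sum_j \epsilon_j h_j^p \in \sum_j \epsilon_j \bR^p$.

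Next, take the admissible set $\cD$ to be the collection of Hasse derivatives $\partial_i$ with respect to the variables $x_i$ ($i \in \cU$), each extended to $\bR$ by acting term-by-term on the representation $\sum c_e x^e$. Conditions (D1) and (D2) are clear. For (D3), follow Example~\ref{ex:hasse}: let $V \subseteq K$ be the $K^p$-span of $\epsilon_1, \dots, \epsilon_s$, so that $\sum_j \epsilon_j \im(\phi)$ consists of those elements all of whose coefficients lie in $V$, and $\im(\sigma)$ consists of those supported on multi-indices divisible by $p$. Given $f = \sum c_e x^e$ with $\partial_i^{[0]}(f) \in \sum_j \epsilon_j \im(\phi)$ for every $i$, for any $e$ with $p \nmid e$ choose $i$ with $p \nmid e_i$; then the coefficient $e_i c_e$ appearing in $\partial_i^{[0]}(f)$ at the monomial obtained by decrementing $e_i$ must lie in $V$, and since $e_i$ is a nonzero integer this forces $c_e \in V$. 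Hence $f$ lies in $\im(\sigma) + \sum_j \epsilon_j \im(\phi)$.

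With the $F$-factorization and admissible set in hand, Theorem~\ref{thm:poly} gives that $\bR$ is a polynomial $K$-algebra. I do not anticipate a real obstacle: every step is a routine transfer of the arguments used for the countable-variable situation (Proposition~\ref{prop:standard-fact} and Example~\ref{ex:hasse}) to the inverse limit $K\invl x_i\invr_{i \in \cU}$, with the unique expansion $f = \sum_e c_e x^e$ playing the role of a polynomial representation throughout. The only conceptual point worth flagging is that the continuity of the Hasse derivatives $\partial_i$ is not an issue here, because Definition~\ref{def:admiss} was designed so that detecting elements outside $\im(\sigma)$, rather than outside the $\bk$-span of $\bR^p$, suffices.
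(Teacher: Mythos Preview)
Your proposal is correct and follows essentially the same route as the paper: define the $F$-factorization $(\phi,\sigma)$ on $\bR$ by the obvious coefficient-wise and exponent-wise formulas, verify (F4) exactly as in Proposition~\ref{prop:standard-fact}, take $\cD$ to be the Hasse derivatives in the variables and check (D3) as in Example~\ref{ex:hasse}, then invoke Theorem~\ref{thm:poly}. The only cosmetic difference is that in characteristic~0 the paper cites \cite[Theorem~5.3]{stillman} directly, whereas you re-derive that case from \cite[Theorem~2.2]{stillman} via the obvious partial derivatives; these amount to the same thing.
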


\begin{proof}
If $K$ has characteristic~0, this follows from \cite[Theorem~5.3]{stillman} (which also covers some cases in positive characteristic). We note that the statement of \cite[Theorem~5.3]{stillman} takes $\cU$ to be countable, but the argument applies generally. Suppose now that $K$ has positive characteristic~$p$. Define endomorphisms $\phi$ and $\sigma$ of $\bR$ by
\begin{displaymath}
\phi \left( \sum c_e x^e \right) = \sum c_e^p x^e, \qquad
\sigma \left( \sum c_e x^e \right) = \sum c_e x^{pe},
\end{displaymath}
Then $(\phi, \sigma)$ defines an $F$-factorization on $\bR$: the argument in Proposition~\ref{prop:standard-fact} applies with essentially no changes.

Let $\ol{\partial}_i$ be the Hasse derivative with respect to $x_i$ on the polynomial ring $K[x_i]_{i \in \cU}$. Let $\partial^n_i$ be the unique continuous extension of $\ol{\partial}^n_i$ to $\bR$; concretely,
\begin{displaymath}
\partial_i^n \left( \sum c_e x^e \right) = \sum c_i \ol{\partial}_i^n(x^e).
\end{displaymath}
One easily sees that $\{\partial_i^n\}_{n \ge 0}$ is a Hasse derivation $\partial_i$ on $\bR$ that is homogeneous of degree $-1$. Let $\cD$ be the set of these Hasse derivations over all $i$. An argument as in Example~\ref{ex:hasse} shows that the set $\cD$ is admissible, and the result then follows from Theorem~\ref{thm:poly}.
\end{proof}

\subsection{Bounded inverse limit rings} \label{ss:bounded}

We now prove a generalization of Theorem~\ref{thm:inverse}. Let $K$, $\cU$, and $\bR$ be as in the previous section. Fix a subring $A$ of $K$ with $\Frac(A)=K$. We say that a subset $S$ of $K$ is {\bf bounded} if there exists a non-zero element $b \in A$ such that $S \subset b^{-1} A$. We let $\bR^{\flat} \subset \bR$ be the subring consisting of elements with bounded coefficients (i.e., the set of coefficients forms a bounded subset of $K$). We note that $\bR^{\flat}$ is naturally identified with $K \otimes_A A \invl x_i \invr_{i \in \cU}$, which is typically \emph{not} isomorphic to $K \invl x_i \invr_{i \in \cU}$. The interest in this ring comes from \cite[\S 5]{stillman}, where it plays a key role in our second proof of Stillman's conjecture: it arises when localizing the ring $A \invl x_i \invr_{i \in \cU}$ over the generic point of $\Spec(A)$.

Suppose for the moment that $K$ has characteristic $p$. For a cardinal $\kappa$, we consider the following condition on $A$:
\begin{itemize}
\item[$(C_{\kappa})$] Given $\epsilon_1, \ldots, \epsilon_s \in K$ and a subset $S$ of $A \cap \sum_{j=1}^s \epsilon_j K^p$ of cardinality at most $\kappa$, there exists a non-zero element $b \in A$ such that $S \subset b^{-p} \sum_{j=1}^s \epsilon_j A^p$.
\end{itemize}
We also consider the following simpler condition:
\begin{itemize}
\item[$(C)$] Given $\epsilon_1, \ldots, \epsilon_s \in K$, there exists a non-zero element $b \in A$ such that $A \cap \sum_{j=1}^s \epsilon_j K^p \subset b^{-p} \sum_{j=1}^s \epsilon_j A^p$.
\end{itemize}
We note that $(C)$ holds if and only if $(C_{\kappa})$ holds for all $\kappa$.

\begin{theorem} \label{thm:inverse2}
Suppose that either $K$ has characteristic~$0$, or $K$ has characteristic~$p$ and condition $(C_{\kappa})$ holds for $\kappa=\# \cU$. Then $\bR^{\flat}$ is a polynomial $K$-algebra.
\end{theorem}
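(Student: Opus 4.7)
The plan is to apply Theorem~\ref{thm:poly} to $R = \bR^\flat$. The characteristic-zero case is handled by \cite[Theorem~5.3]{stillman} (the argument there does not require $\cU$ countable, as noted in the proof of Theorem~\ref{thm:inverse}), so I concentrate on the case where $K$ has characteristic $p$ and $(C_\kappa)$ holds for $\kappa = \#\cU$. The endomorphisms $\phi, \sigma$ of $\bR$ defined in the proof of Theorem~\ref{thm:inverse} restrict to endomorphisms of $\bR^\flat$: the coefficients of $\sigma(f)$ are exactly those of $f$, while if the coefficients of $f$ lie in $b^{-1}A$ then those of $\phi(f)$ lie in $b^{-p}A$. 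Axioms (F1)--(F3) therefore descend from $\bR$. Similarly, the Hasse derivatives $\partial_i$ with respect to the variables restrict to $\bR^\flat$, since $\partial_i^n$ merely extracts existing coefficients with integer multipliers; let $\cD$ be the resulting collection, for which (D1) and (D2) are immediate.

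The heart of the argument is verifying (F4) and (D3) for $\bR^\flat$, and this is exactly where $(C_\kappa)$ is needed. It suffices to check both axioms on homogeneous elements, since $\bR^\flat$ is graded and both sides of (F4) are graded subspaces. For (F4), let $f = \sum c_e x^e \in \bR^\flat$ be a homogeneous element of $\im(\sigma) \cap \sum_j \epsilon_j \im(\phi)$, with $\{c_e\} \subset b_0^{-1}A$ for some $b_0 \in A$. Membership in $\im(\sigma)$ forces $c_e = 0$ whenever $p \nmid e$, while the $\sum_j \epsilon_j \im(\phi)$ condition gives $c_e \in \sum_j \epsilon_j K^p$ for each $e$. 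Naively mimicking Proposition~\ref{prop:standard-fact} would produce $d_{j,e} \in K$ with $c_e = \sum_j \epsilon_j d_{j,e}^p$, but with no a priori bound on the $d_{j,e}$ the resulting $g_j = \sum_{p \mid e} d_{j,e} x^{e/p}$ might leave $\bR^\flat$. Here $(C_\kappa)$ intervenes: the set $\{b_0^p c_e\}$ sits inside $A \cap \sum_j \epsilon_j K^p$ and has cardinality at most $\#\cU = \kappa$, so $(C_\kappa)$ produces a single $b \in A$ with $b_0^p c_e \in b^{-p} \sum_j \epsilon_j A^p$ uniformly in $e$. The resulting $d_{j,e}$ all lie in $(b_0 b)^{-1}A$, so $g_j \in \bR^\flat$, yielding $f = \sum_j \epsilon_j g_j^p \in \sum_j \epsilon_j (\bR^\flat)^p$.

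The verification of (D3) is parallel, following Example~\ref{ex:hasse}: the hypothesis $\partial_i^{[0]}(f) \in \sum_j \epsilon_j \im(\phi)$ for all $i$ forces every coefficient $c_e$ with $p \nmid e$ to lie in $\sum_j \epsilon_j K^p$, and the same application of $(C_\kappa)$ expresses these uniformly as $c_e = \sum_j \epsilon_j d_{j,e}^p$ with bounded $d_{j,e}$, producing a decomposition $f = \sigma(w) + \sum_j \epsilon_j \phi(v_j)$ with $w, v_j \in \bR^\flat$. The hard part throughout is precisely this transition from pointwise existence of $p$-th root expressions over $K$ to a uniformly bounded family over $A$; condition $(C_\kappa)$ is tailor-made for this. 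With (F4) and (D3) established, Theorem~\ref{thm:poly} concludes that $\bR^\flat$ is a polynomial $K$-algebra.
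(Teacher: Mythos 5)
Your overall strategy matches the paper's: restrict the $F$-factorization $(\phi,\sigma)$ and the Hasse derivatives from $\bR$ to $\bR^\flat$, and apply Theorem~\ref{thm:poly}. The treatment of (D3) is essentially identical to the paper's argument (which, like yours, scales $f$ into $A\invl x_i\invr_{i\in\cU}$, isolates the coefficients $c_e$ with $p\nmid e$ lying in $A\cap\sum_j\epsilon_jK^p$, invokes $(C_\kappa)$ to find a single denominator $b$, and reassembles $f=\sigma(h')+\sum_j\epsilon_j\phi(g_j')$ with $h',g_j'\in\bR^\flat$). That part is correct.

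Where you diverge from the paper is in (F4). You assert that the hypothesis only gives $c_e\in\sum_j\epsilon_jK^p$ ``with no a priori bound on the $d_{j,e}$,'' and then bring in $(C_\kappa)$ to produce a uniform bound. That works, but it conceals a simplification the paper exploits: the hypothesis of (F4) is $f\in\im(\sigma^\flat)\cap\sum_j\epsilon_j\im(\phi^\flat)$, so one already has $f=\sum_j\epsilon_j\phi(g_j)$ with $g_j\in\bR^\flat$. Writing $g_j=g_j^1+g_j^2$ where $g_j^1$ collects the monomials with exponent divisible by $p$, one finds $f=\sum_j\epsilon_j\phi(g_j^1)$, and $g_j^1=\sigma(h_j)$ with $h_j$ having \emph{the same} set of coefficients as $g_j^1$, hence bounded. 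Thus $(\phi^\flat,\sigma^\flat)$ satisfies (F4) unconditionally, with no appeal to $(C_\kappa)$. This is more than a cosmetic point: the paper in fact proves that $\cD^\flat$ is admissible \emph{if and only if} $(C_\kappa)$ holds, while (F4) always holds for $\bR^\flat$; your version, by invoking $(C_\kappa)$ in both places, obscures the fact that the condition is precisely calibrated to (D3). Your proof of the theorem is nonetheless correct as stated.
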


If $K$ has characteristic~0, the result follows from \cite[Theorem~5.3]{stillman}. We assume for the rest of the proof that $K$ has characteristic~$p$. We aim to show that the $F$-factorization and admissible set $\cD$ on $\bR$ constructed in the proof of Theorem~\ref{thm:inverse} restrict to such a structure on $\bR$. It is clear that $\phi$ and $\sigma$ map $\bR^{\flat}$ into itself; we let $\phi^{\flat}$ and $\sigma^{\flat}$ denote their restrictions to $\bR^{\flat}$. Similarly, it is clear that each Hasse derivative $\partial_i$ maps $\bR^{\flat}$ into itself; we let $\cD^{\flat}=\{ \partial_i \vert_{\bR^{\flat}} \}_{i \in \cU}$.

\begin{lemma}
The pair $(\phi^{\flat}, \sigma^{\flat})$ is an $F$-factorization on $\bR^{\flat}$.
\end{lemma}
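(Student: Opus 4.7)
The plan is to verify the four axioms of Definition~\ref{def:ffac} for $(\phi^{\flat}, \sigma^{\flat})$ by restricting from the $F$-factorization $(\phi, \sigma)$ on $\bR$ constructed in the proof of Theorem~\ref{thm:inverse}. Axioms (F1), (F2), and (F3) are essentially formal: each asserts a ring-theoretic identity that already holds on the larger ring $\bR$ and so descends to the subring $\bR^{\flat}$; injectivity of $\phi^{\flat}$ is inherited from injectivity of $\phi$; and since $(\bR^{\flat})_0 = K$, the compatibility $\phi^{\flat}|_{(\bR^{\flat})_0} = F_K$ is automatic. The only nontrivial content is (F4), which is where the hypothesis $(C_{\#\cU})$ must be used.

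For (F4), the inclusion $\sum_j \epsilon_j (\bR^{\flat})^p \subseteq \im(\sigma^{\flat}) \cap \sum_j \epsilon_j \im(\phi^{\flat})$ is automatic (Remark~\ref{rmk:F4}). For the reverse inclusion, I would take homogeneous $f \in \im(\sigma^{\flat}) \cap \sum_j \epsilon_j \im(\phi^{\flat})$ and expand $f = \sum_e c_e x^e$ with $c_e \in K$; since $f \in \im(\sigma)$, one has $c_e = 0$ unless $p \mid e$. Applying (F4) for $\bR$ as a black box, write $f = \sum_j \epsilon_j g_j^p$ with $g_j \in \bR$, and compare coefficients to get $c_e = \sum_j \epsilon_j b_{j,e}^p$, where $b_{j,e}$ is the coefficient of $x^e$ in $g_j$. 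The a priori obstruction is that the $b_{j,e}$ need not have a common denominator in $A$, so the $g_j$ need not lie in $\bR^{\flat}$.

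This is precisely what $(C_{\#\cU})$ resolves. Since $f \in \bR^{\flat}$, pick a nonzero $d \in A$ with $d c_e \in A$ for every $e$. Then $\{d c_e\}_e$ has cardinality at most $\#\cU$ (the number of multi-indices on $\cU$) and lies in $A \cap \sum_j (d \epsilon_j) K^p$, so applying $(C_{\#\cU})$ to this set with scalars $d\epsilon_1, \ldots, d\epsilon_s$ produces a uniform denominator $b \in A \setminus \{0\}$ and elements $a_{j,e} \in A$ with $b^p d c_e = \sum_j (d\epsilon_j) a_{j,e}^p$, hence $c_e = b^{-p} \sum_j \epsilon_j a_{j,e}^p$, where we may take $a_{j,e} = 0$ when $p \nmid e$. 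Setting $w_j = \sum_{e'} (a_{j,pe'}/b)\, x^{e'}$ gives an element of $\bR^{\flat}$ (all coefficients in $b^{-1} A$), and a direct computation using $w_j^p = \phi(\sigma(w_j))$ yields $\sum_j \epsilon_j w_j^p = f$, establishing (F4). The single real obstacle throughout is the uniform clearing of denominators across an infinite family of coefficients, which is exactly what $(C_{\#\cU})$ is engineered to deliver.
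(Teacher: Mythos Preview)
Your argument is correct in the context of Theorem~\ref{thm:inverse2}, where $(C_{\#\cU})$ is assumed, but it takes a genuinely different and unnecessarily heavy route from the paper. The paper proves this lemma \emph{without} invoking $(C_{\kappa})$ at all; the condition $(C_{\kappa})$ enters only in the next lemma, where it is shown to be \emph{equivalent} to admissibility of $\cD^{\flat}$.

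The key observation you missed is that the hypothesis $f \in \sum_j \epsilon_j \im(\phi^{\flat})$ already hands you $g_j \in \bR^{\flat}$ with $f = \sum_j \epsilon_j \phi(g_j)$; there is no need to pass to $\bR$ and lose boundedness. The paper then splits each $g_j = g_j^1 + g_j^2$ according to whether the monomial exponent is divisible by $p$; since $\phi$ preserves exponents and $f \in \im(\sigma)$ has only $p$-divisible exponents, one gets $f = \sum_j \epsilon_j \phi(g_j^1)$. Now $g_j^1 = \sigma(h_j)$ for an $h_j$ with the same coefficient set as $g_j^1$, hence bounded, so $h_j \in \bR^{\flat}$ and $\phi(g_j^1) = \phi\sigma(h_j) = h_j^p \in (\bR^{\flat})^p$. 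Done.

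What this buys: the paper's argument shows that $(\phi^{\flat},\sigma^{\flat})$ is an $F$-factorization for \emph{any} subring $A$ with $\Frac(A)=K$, cleanly separating the unconditional structure from the genuine obstruction. Your approach works but obscures this separation, and your sentence ``(F4)\ldots\ is where the hypothesis $(C_{\#\cU})$ must be used'' is simply false.
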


\begin{proof}
It is clear that (F1), (F2), and (F3) hold. We now verify (F4). Thus let $\epsilon_1, \ldots, \epsilon_s \in K$ be given, and suppose $f$ belongs to $\im(\sigma^{\flat}) \cap \sum_{j=1}^s \epsilon_j \im(\phi^{\flat})$. Write $f=\sum_{j=1}^s \epsilon_j \phi(g_j)$ for some $g_j \in \bR^{\flat}$. Decompose $g_j$ as $g_j^1+g_j^2$, where $g_j^1$ contains all monomials $x^e$ where $p \mid e$, and $g_j^2$ contains the other monomials. Every monomial occurring in $f$ with non-zero coefficient has the form $x^e$ with $p \mid e$, since $f \in \im(\sigma)$, and so we see that $f=\sum_{j=1}^s \epsilon_j \phi(g_j^1)$. Since all exponents appearing in $g_j^1$ are divisible by $p$, we can write $g_j^1=\sigma(h_j)$ for some $h_j \in \bR$. The set of coefficients appearing in $h_j$ is the same as the set of coefficients appearing in $g_j^1$, and is thus bounded, and so $h_j \in \bR^{\flat}$. Thus $\phi(g_j^1)=\phi(\sigma(h_j)) \in (\bR^{\flat})^p$, and the claim follows.
\end{proof}

\begin{lemma}
The set $\cD^{\flat}$ is admissible if and only if condition $(C_{\kappa})$ holds, where $\kappa=\# \cU$.
\end{lemma}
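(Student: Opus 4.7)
The plan is to observe that (D1) and (D2) hold essentially by construction: each $\partial_i$ is homogeneous of degree $-1$, and the commutation $\partial_i \circ \phi = \phi \circ \partial_i$ holds on polynomials and extends by continuity. The entire content of admissibility therefore lies in axiom (D3). The coefficient-extraction argument of Example~\ref{ex:hasse} already handles everything at the level of the ambient ring $\bR$; what is new here is the need to clear denominators \emph{uniformly} across the coefficients of the resulting preimages so as to land inside the bounded subring $\bR^{\flat}$, and this is precisely what condition $(C_{\kappa})$ provides.

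For the forward direction, suppose $x = \sum_e c_e x^e \in \bR^{\flat}$ is homogeneous with coefficients bounded in $b^{-1} A$ and satisfies $\partial_i^{[0]}(x) \in \sum_{j} \epsilon_j \im(\phi^{\flat})$ for every $i \in \cU$. Since $\im(\phi^{\flat}) \subset \im(\phi)$, the argument of Example~\ref{ex:hasse} applied verbatim inside $\bR$ yields $c_e \in \sum_j \epsilon_j K^p$ whenever $p \nmid e$. Split $x = x' + x''$ according to whether $p$ divides every component of $e$. The part $x'$ is $\sigma(y)$ with $y = \sum_{p \mid e} c_e x^{e/p}$, and since $y$ reuses the coefficients of $x$ it lies in $\bR^{\flat}$. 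For $x''$, the set $S = \{b^p c_e : p \nmid e\}$ is contained in $A \cap \sum_j \epsilon_j K^p$ and has cardinality at most $\#\cU = \kappa$, so $(C_{\kappa})$ produces $b' \in A$ and expressions $(b'b)^p c_e = \sum_j \epsilon_j a_{j,e}^p$ with $a_{j,e} \in A$; then $g_j = \sum_{p \nmid e} (a_{j,e}/(b'b))\, x^e$ has coefficients in $(b'b)^{-1} A$, hence lies in $\bR^{\flat}$, and satisfies $\sum_j \epsilon_j \phi^{\flat}(g_j) = x''$. Therefore $x = \sigma^{\flat}(y) + \sum_j \epsilon_j \phi^{\flat}(g_j)$, verifying (D3).

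For the converse, suppose $(C_{\kappa})$ fails via some data $(\epsilon_1, \ldots, \epsilon_s; S)$ with $|S| \le \kappa = \#\cU$. Inject $S$ into $\cU$ to label $S = \{c_i\}_{i \in T}$ and form the degree-one element $x = \sum_{i \in T} c_i x_i$, which lies in $\bR^{\flat}_1$ since each $c_i \in A$. One computes $\partial_i^{[0]}(x) = c_i$ for $i \in T$ and $0$ otherwise; the degree-zero piece of $\im(\phi^{\flat})$ is exactly $K^p$, so the hypothesis $c_i \in \sum_j \epsilon_j K^p$ places $\partial_i^{[0]}(x) \in \sum_j \epsilon_j \im(\phi^{\flat})$ for every $i$. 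Because $\sigma$ multiplies degrees by $p \ge 2$, we have $\im(\sigma^{\flat}) \cap \bR^{\flat}_1 = 0$, so if (D3) held it would force $x = \sum_j \epsilon_j \phi^{\flat}(g_j)$ with $g_j \in \bR^{\flat}_1$. Writing the coefficients of all the $g_j$ over a common denominator $b' \in A$ and equating coefficients of $x_i$ gives $(b')^p c_i \in \sum_j \epsilon_j A^p$ for each $i \in T$, whence $S \subset (b')^{-p} \sum_j \epsilon_j A^p$, contradicting the failure of $(C_{\kappa})$. The only genuine obstacle in this proof is identifying $(C_{\kappa})$ as the right bookkeeping condition; once it is in hand, the argument runs symmetrically in both directions.
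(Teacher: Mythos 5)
Your proposal is correct and follows essentially the same approach as the paper: both directions hinge on the degree-one witness element $\sum a_i x_i$ for passing between $(C_\kappa)$ and (D3), and the forward direction uses the coefficient-extraction argument of Example~\ref{ex:hasse} on $\bR$ followed by a uniform denominator-clearing step supplied by $(C_\kappa)$. The only cosmetic deviations are that you argue the converse via contrapositive rather than directly, and you enumerate $S$ by an injection into $\cU$ rather than a surjection from $\cU$ with repeats; these change nothing substantive.
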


\begin{proof}
It is clear that $\cD^{\flat}$ satisfies (D1) and (D2). We show that (D3) holds if and only if $(C_{\kappa})$ holds.

First suppose that (D3) holds. Let $\epsilon_1, \ldots, \epsilon_s \in K$ and let $S$ be a subset of $A \cap \sum_{j=1}^s \epsilon_j K^p$ of cardinality at most $\kappa$. Enumerate $S$ as $\{a_i\}_{i \in \cU}$; if the cardinality of $S$ is smaller than $\kappa$, simply let the values of $a_i$ repeat. Consider the element $f = \sum_{i \in \cU} a_i x_i$ of $\bR^{\flat}$. We have $\partial^{[0]}_i(f) = a_i$, which belongs to $\sum_{j=1}^s \epsilon_j K^p \subset \sum_{j=1}^s \epsilon_j \im(\phi^{\flat})$. Since this holds for all $i$, we conclude that $f \in \sum_{j=1}^s \epsilon_j \im(\phi^{\flat})$ by (D3); we note that the $\im(\sigma^{\flat})$ summand in (D3) is irrelevant here since $f$ is linear. Write $f=\sum_{j=1}^s \epsilon_j \phi(g_j)$ with $g_j \in \bR^{\flat}$. Letting $b \in A$ be a non-zero element such that $b g_j \in A \invl x_i \invr_{i \in \cU}$ for all $j$, we see that $a_i \in b^{-p} \sum_{j=1}^s \epsilon_j A^p$ for all $j$, and so $(C_{\kappa})$ holds.

Conversely, suppose $(C_{\kappa})$ holds. Let $\epsilon_1, \ldots, \epsilon_s \in K$ and $f \in \bR^\flat$ be given, and suppose $\partial^{[0]}(f) \in \sum_{j=1}^s \epsilon_j \im(\phi^{\flat})$ for all $\partial \in \cD$. We aim to show that $f$ belongs to $\im(\sigma^{\flat}) + \sum_{j=1}^s \epsilon_j \im(\phi^{\flat})$. By scaling $f$ appropriately, we may assume $f$ belongs to $A \invl x_i \invr_{i \in \cU}$. Write $f=\sum c_e x^e$, as usual. Since $\cD$ satisfies (D3), we can write $f=\sigma(h)+\sum_{j=1}^s \epsilon_j \phi(g_j)$ for $h,g_1,\ldots,g_s \in \bR$. We thus see that if $p \nmid e$ then $c_e$ belongs to $A \cap \sum_{j=1}^s \epsilon_j K^p$. By $(C_{\kappa})$, we thus see that there is some non-zero $b \in A$ such that each such $c_e$ is contained in $b^{-p} \sum_{j=1}^s \epsilon_j A^p$; we note that the collection of $c_e$'s has cardinality at most $\kappa$. For $p \nmid e$, write $c_e=\sum_{j=1}^s \epsilon_j (d_{j,e}/b)^p$ where $d_{j,e} \in A$. Then $f = \sigma(h')+\sum_{j=1}^s \epsilon_j \varphi(g_j')$, where $h'=\sum_{p \mid e} c_e x^{e/p}$ and $g_j' = b^{-1} \sum_{p \nmid e} d_{j,e} x^e$. Clearly, $h'$ has coefficients in $A$, and thus belongs to $\bR^{\flat}$. It is clear from the definition that the $g_j'$ belong to $\bR^{\flat}$. We have thus verified (D3).
\end{proof}

Theorem~\ref{thm:inverse2} follows from the above two lemmas: indeed, if $(C_{\kappa})$ holds, then these lemmas show that $\bR^{\flat}$ admits an $F$-factorization and an admissible set of Hasse derivations, and is thus a polynomial ring by Theorem~\ref{thm:poly}.

\subsection{More on condition $(C)$}

To apply Theorem~\ref{thm:inverse2}, we need some understanding of condition $(C)$. We therefore turn our attention to it in this section. Throughout this section, $\bk$ denotes a field of characteristic~$p$, and $\otimes$ will denote tensor product over $\bk$.

\begin{proposition} \label{prop:easyC}
Suppose that $A$ is a noetherian domain that is finite over $A^p$. Then $A$ satisfies condition $(C)$.
\end{proposition}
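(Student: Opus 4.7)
The plan is to translate the statement $(C)$ into a finiteness statement about an $A^p$-submodule of $A$, and then clear denominators.

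First I would set $M := A \cap \sum_{j=1}^{s} \epsilon_j K^p$. This is an $A^p$-submodule of $A$ (both factors of the intersection are $A^p$-stable). The hypothesis that $A$ is finite over $A^p$, together with the Eakin--Nagata theorem applied to the extension $A^p \subset A$ (with $A$ noetherian and finitely generated as an $A^p$-module), forces $A^p$ itself to be noetherian. Hence $A$ is a noetherian $A^p$-module and its submodule $M$ is finitely generated over $A^p$. This is the key structural input that makes a single denominator suffice.

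Next I would pick $A^p$-module generators $m_1,\dots,m_r$ of $M$. Each $m_i$ lies in $\sum_{j} \epsilon_j K^p$, so I can write $m_i = \sum_{j=1}^s \epsilon_j k_{ij}^p$ with $k_{ij} \in K$. Since $K = \Frac(A)$ and there are only finitely many $k_{ij}$, I choose a nonzero common denominator $b \in A$ with $b k_{ij} \in A$ for all $i,j$. Then
\[
 b^p m_i \;=\; \sum_{j=1}^s \epsilon_j (b k_{ij})^p \;\in\; \sum_{j=1}^s \epsilon_j A^p \qquad (i=1,\dots,r).
\]

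The remaining step is to upgrade this from the generators to all of $M$, and here the characteristic $p$ hypothesis does the work. Given any $a \in M$, write $a = \sum_i \alpha_i m_i$ with $\alpha_i \in A^p$, and let $\gamma_i \in A$ satisfy $\alpha_i = \gamma_i^p$. Then using additivity of Frobenius in characteristic $p$,
\[
 b^p a \;=\; \sum_i \gamma_i^p \, b^p m_i \;=\; \sum_{j=1}^s \epsilon_j \sum_i (\gamma_i b k_{ij})^p \;=\; \sum_{j=1}^s \epsilon_j \Bigl( \sum_i \gamma_i b k_{ij} \Bigr)^{\!p},
\]
and since each $\gamma_i b k_{ij} \in A$, the inner sum lies in $A$, so $b^p a \in \sum_j \epsilon_j A^p$. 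Thus $a \in b^{-p}\sum_j \epsilon_j A^p$, proving $(C)$.

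The only nontrivial ingredients are Eakin--Nagata (to know $A^p$ is noetherian, hence $M$ finitely generated) and additivity of the Frobenius (to collect the generator-level containments into a single expression for arbitrary $a$); I expect no real obstacle, since once $M$ is finitely generated over $A^p$ the common-denominator argument is immediate.
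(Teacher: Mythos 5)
Your argument is essentially the paper's: both set $M = A \cap \sum_j \epsilon_j K^p$, note it is an $A^p$-submodule of $A$, conclude it is finitely generated over $A^p$, and then choose a common denominator $b$ for the finitely many $K$-expressions so that $b^p$ clears everything at once. The one place you overshoot is in establishing that $A^p$ is noetherian: you invoke Eakin--Nagata, but since $A$ is a domain the Frobenius $A \to A^p$, $a \mapsto a^p$, is a ring isomorphism, so $A^p \cong A$ is noetherian for free and Eakin--Nagata is not needed. Likewise your final Frobenius-additivity computation, while correct, is more than required: once $b^p m_i \in N := \sum_j \epsilon_j A^p$ for each generator, the containment $M \subset b^{-p} N$ follows immediately from the fact that $N$ is an $A^p$-module, since $b^p a = \sum_i \alpha_i (b^p m_i) \in N$ for $\alpha_i \in A^p$.
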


\begin{proof}
Let $\epsilon_1, \ldots, \epsilon_s \in K=\Frac(A)$ be given. Let $M=A \cap \sum_{j=1}^s \epsilon_j K^p$ and $N=\sum_{j=1}^s \epsilon_j A^p$. Since $K^p N = \sum_{j=1}^s \epsilon_j K^p$, we have $M \subset K^p N$. On the other hand, $M$ is an $A^p$-submodule of $A$, and thus finitely generated as an $A^p$-module (since $A$ is finite over $A^p$ and $A^p\cong A$ is noetherian). Let $x_1, \ldots, x_n$ be generators for $M$ as an $A^p$-module and let $b \in A$ be a non-zero element such that $b^p x_i \in N$ for each $1 \le i \le n$. Then $M \subset b^{-p} N$, as required.
\end{proof}

\begin{lemma}
Let $\bk \subset K_1, K_2 \subset L$ be fields of characteristic $p$ such that $K_1$ and $K_2$ are linearly disjoint over $\bk$ and $L$ is the compositum of $K_1$ and $K_2$. Let $\{\epsilon_i\}_{i \in I}$ and $\{\delta_j\}_{j \in J}$ be subsets of $K_1$ and $K_2$. Then
\begin{displaymath}
(K_1 \otimes K_2) \cap \left( \sum_{i \in I,j \in J} \epsilon_i \delta_j L^p \right) = \left( \sum_{i \in I} \epsilon_i K_1^p \right) \otimes \left( \sum_{j \in J} \delta_j K_2^p \right),
\end{displaymath}
where both sides are regarded as subsets of $L$.
\end{lemma}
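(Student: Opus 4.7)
Plan. The $\supseteq$ inclusion is immediate: for $c\in K_1$, $d\in K_2$, the product $(\epsilon_ic^p)(\delta_jd^p)=\epsilon_i\delta_j(cd)^p$ lies both in $K_1\otimes K_2$ (identified with $K_1K_2\subset L$ via linear disjointness) and in $\sum\epsilon_i\delta_jL^p$.

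For $\subseteq$, set $R:=K_1K_2\cong K_1\otimes_\bk K_2$. Applying Frobenius to the linear disjointness hypothesis yields $R^p\cong K_1^p\otimes_{\bk^p}K_2^p\hookrightarrow L^p$. Moreover, for any nonzero $r\in R$ one has $r^{-1}=r^{p-1}/r^p\in L$ with $r^p\in R^p\setminus\{0\}$, so $L=R\otimes_{R^p}L^p$ and $L^p=\Frac(R^p)$; in particular $R^p\hookrightarrow L^p$ is a flat localization.

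Given $f=\sum_{i,j}\epsilon_i\delta_j\ell_{ij}^p\in R$ with $\ell_{ij}\in L$, I would clear denominators, finding $b\in R\setminus\{0\}$ with $b\ell_{ij}\in R$ for all $i,j$. Then
\begin{displaymath}
b^pf=\sum_{i,j}\epsilon_i\delta_j(b\ell_{ij})^p\in\sum_{i,j}\epsilon_i\delta_jR^p=:N.
\end{displaymath}
It thus suffices to show $b^p$ acts as a non-zero-divisor on $R/N$. To verify this, I would first reduce to the case where $\{\epsilon_i\}$ is $K_1^p$-linearly independent and $\{\delta_j\}$ is $K_2^p$-linearly independent (this does not change $N$). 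Extending to $K_r^p$-bases of $K_r$, compatibly refined by a $\bk^p$-basis of $\bk$, one aims to produce an $R^p$-module decomposition $R=N\oplus N'$ with $N'$ also an $R^p$-submodule. Since $R$ is a domain and $b^p\in R^p$ is nonzero, multiplication by $b^p$ is injective on each summand, hence on $R/N\cong N'$.

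The main obstacle is the construction of the $R^p$-module complement $N'$---equivalently, exhibiting an $R^p$-module basis of $R$ that contains $\{\epsilon_i\delta_j\}$. When $\bk$ is imperfect, the naive basis $\{\epsilon_i\otimes\delta_j\}$ arising from arbitrary $K_r^p$-bases of $K_r$ overcounts the $\bk$-piece, which sits inside both $K_1$ and $K_2$; one must refine the $K_2^p$-basis of $K_2$ so that only $\delta_0=1\in\bk$ interacts with the $\bk^p$-part of the $K_1^p$-basis of $K_1$. The hypothesis of linear disjointness over $\bk$ (rather than the weaker $\bk^p$) is exactly what makes this compatible refinement possible, and this is the technical heart of the argument.
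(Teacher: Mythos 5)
Your framework is the same as the paper's: reduce the inclusion $\subseteq$ to showing that $b^p\in R^p$ is a non-zerodivisor on $R/N$, where $N=\sum_{i,j}\epsilon_i\delta_jR^p$, and try to exhibit an $R^p$-module complement of $N$ in $R$. You correctly pinpoint the crux--after extending $\{\epsilon_i\}$ to a $K_1^p$-basis of $K_1$ and $\{\delta_j\}$ to a $K_2^p$-basis of $K_2$, the family $\{\epsilon_i\delta_j\}$ is in general \emph{not} $R^p$-linearly independent, because the $\bk/\bk^p$ part is ``counted twice.'' But you stop there: you assert that linear disjointness over $\bk$ makes a compatible refinement possible without actually producing one or proving anything about it. As written, the proposal identifies the right difficulty but does not resolve it, so it is not a complete proof.

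To be concrete about why this step cannot be skipped: with $K_1=K_2=L=\bk$ imperfect, the two bases $\{\epsilon_i\}_{I_+}$ and $\{\delta_j\}_{J_+}$ are each $\bk^p$-bases of $\bk$, so $\{\epsilon_i\delta_j\}$ has $[\bk:\bk^p]^2$ elements sitting inside the $[\bk:\bk^p]$-dimensional $\bk^p$-vector space $\bk=L$; it is certainly $L^p$-dependent. You should be aware that the paper's own proof asserts precisely the statement you distrust--that $\{\epsilon_i\delta_j\}_{i\in I_+,\,j\in J_+}$ is $L^p$-linearly independent, i.e.\ that $K_1\otimes K_2$ is free over $K_1^p\otimes K_2^p$ on this family--and this appears to be false over an imperfect $\bk$ for exactly the reason you give. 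So your instinct is sound, but the argument still needs the missing construction: one cannot simply ``refine'' $\{\delta_j\}_{j\in J}$ since those elements are given; one must instead express the given $\delta_j$'s against a basis of $K_2$ over $K_2^p$ chosen compatibly with a $\bk^p$-basis of $\bk$, and then check directly that $N$ has a complementary $R^p$-submodule (equivalently, that $R/N$ is $R^p$-torsion-free). That verification is the content, and it is absent from the proposal.
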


\begin{proof}
Let $X$ denote the left side and $Y$ the right side. Clearly, $Y \subset X$, so we must prove the reverse inclusion. We may as well assume that $\{\epsilon_i\}_{i \in I}$ is linearly independent over $K_1^p$; extend it to a basis $\{\epsilon_i\}_{i \in I_+}$, where $I \subset I_+$. Similarly, let $\{\delta_j\}_{j \in J_+}$ be a basis for $K_2$ over $K_2^p$. We claim that $\{\epsilon_i \delta_j\}_{i \in I_+, j \in J_+}$ is linearly independent over $L^p$. To see this, suppose that $\sum_{i \in I_+, j \in J_+} c_{i,j} \epsilon_i \delta_j=0$ is a linear relation with $c_{i,j} \in L^p$. Since $L=\Frac(K_1 \otimes K_2)$, we can clear denominators and assume that $c_{i,j} \in K_1^p \otimes K_2^p$. But $K_1 \otimes K_2$ is a free module over $K_1^p \otimes K_2^p$ with basis $\{\epsilon_i \delta_j\}_{i \in I_+, j \in J_+}$, and so $c_{i,j}=0$ for all $i$ and $j$.

Now let $x \in X$ be given. Since $x$ belongs to $K_1 \otimes K_2$, we can express it as $x=\sum_{i \in I_+, j \in J_+} c_{i,j} \epsilon_i \delta_j$ with $c_{i,j} \in K_1^p \otimes K_2^p$. On the other hand, since $x$ belongs to $\sum_{i \in I, j \in J} \epsilon_i \delta_j L^p$, we can express it as $x=\sum_{i \in I, j \in J} c'_{i,j} \epsilon_i \delta_j$ with $c'_{i,j} \in L^p$. By the previous paragraph, we must have $c'_{i,j}=c_{i,j} \in K_1^p \otimes K_2^p$, which shows that $x$ belongs to $Y$.
\end{proof}

\begin{lemma}
Let $\bk$ be a field of characteristic~$p$ and let $B_1$ and $B_2$ be integral domains containing $\bk$ such that $C=B_1 \otimes_{\bk} B_2$ is a domain. Let $K_i=\Frac(B_i)$ and $L=\Frac(C)$. Let $\epsilon_1, \ldots, \epsilon_s \in K_1$. Then
\begin{displaymath}
C \cap \left( \sum_{j=1}^s \epsilon_j L^p \right) = \left( B_1 \cap \sum_{j=1}^s \epsilon_j K_1^p \right) \otimes \left( B_2 \cap K_2^p \right),
\end{displaymath}
where each side is regarded as a subset of $C$.
\end{lemma}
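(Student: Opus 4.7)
The plan is to combine the previous lemma with a general ``distributive'' identity for tensor products of intersecting subspaces.

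\emph{Setup for the previous lemma.} Since $C = B_1 \otimes_\bk B_2$ is a domain, so is its localization $K_1 \otimes_\bk K_2$, and the latter embeds naturally into $L = \Frac(C)$. Consequently $K_1$ and $K_2$ are linearly disjoint over $\bk$ inside $L$, and $L$ is their compositum. I apply the previous lemma with $J=\{1\}$ and $\delta_1=1$, which gives
\[
(K_1 \otimes K_2) \cap \Bigl( \sum_{j=1}^s \epsilon_j L^p \Bigr) = \Bigl( \sum_{j=1}^s \epsilon_j K_1^p \Bigr) \otimes K_2^p.
\]
Intersecting both sides with $C \subset K_1 \otimes K_2$, the lemma reduces to proving
\[
(B_1 \otimes B_2) \cap \Bigl[ \Bigl( \sum_{j=1}^s \epsilon_j K_1^p \Bigr) \otimes K_2^p \Bigr] = \Bigl( B_1 \cap \sum_{j=1}^s \epsilon_j K_1^p \Bigr) \otimes (B_2 \cap K_2^p).
\]

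\emph{The linear-algebraic core.} I would then isolate the following statement: for any $\bk$-subspaces $A,A' \subset V$ and $B,B' \subset W$,
\[
(A \otimes B) \cap (A' \otimes B') = (A \cap A') \otimes (B \cap B'),
\]
with all tensor products over $\bk$ and intersections taken inside $V \otimes_\bk W$. Applying this with $V = K_1$, $W = K_2$, $A = B_1$, $A' = \sum_{j} \epsilon_j K_1^p$, $B = B_2$, $B' = K_2^p$ delivers the lemma. (The inclusion $\supseteq$ is immediate; the content is the reverse inclusion.)

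\emph{Proof sketch of the linear-algebraic statement.} I would proceed by a basis-bookkeeping argument. Pick a $\bk$-basis of $A \cap A'$, extend it by vectors in $A$ to a basis of $A$, and separately by vectors in $A'$ to a basis of $A'$; via the isomorphism $(A+A')/A \cong A'/(A \cap A')$, the union of these three families is a basis of $A+A'$. Do likewise on the $W$-side to obtain a basis of $B+B'$ partitioned into three blocks labelled by $\cap$, $B$-only, $B'$-only. Taking tensors, we get a basis of $(A+A') \otimes (B+B')$ partitioned into nine blocks. Any $x$ in $(A \otimes B) \cap (A' \otimes B')$ must have its expansion supported in the blocks of type $(\cap\text{ or } A) \times (\cap \text{ or }B)$ on one hand, and also in $(\cap \text{ or }A') \times (\cap\text{ or }B')$ on the other; the common block is precisely $\cap \times \cap$, so $x \in (A \cap A') \otimes (B \cap B')$.

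The main obstacle is the linear-algebraic identity in the last step; once it is established, the rest is a formal application of the previous lemma followed by restriction to $C$.
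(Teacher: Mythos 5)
Your reduction (apply the previous lemma with $J=\{1\}$, $\delta_1=1$, then intersect with $C$) is exactly the paper's route, and the paper also dispatches the final step in one sentence (``the right side factors as required''). You correctly identified that this is where the real content lies. However, your proposed linear-algebraic identity does not apply as stated, and the gap is not cosmetic.

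Your lemma is formulated for $\bk$-subspaces $A,A'\subset V$ and $B,B'\subset W$, with everything happening in $V\otimes_{\bk}W$. You then take $A'=\sum_j \epsilon_j K_1^p$ and $B'=K_2^p$. Neither is a $\bk$-subspace: multiplication by $\lambda\in\bk\setminus\bk^p$ does not preserve $K_1^p$ (and hence not $\sum_j\epsilon_j K_1^p$), nor $K_2^p$. They are only $\bk^p$-subspaces. Consequently $A\cap A'=B_1\cap\sum_j\epsilon_j K_1^p$ and $B\cap B'=B_2\cap K_2^p$ are also only $\bk^p$-spaces, so the very first step of your basis-bookkeeping (``pick a $\bk$-basis of $A\cap A'$'') cannot be carried out.

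The natural repair --- run the same basis argument over $\bk^p$ --- does not work either, because it would produce a statement about subsets of $K_1\otimes_{\bk^p}K_2$, while the intersection in the lemma is taken inside $L$, i.e.\ inside $K_1\otimes_{\bk}K_2$. The canonical surjection $K_1\otimes_{\bk^p}K_2\to K_1\otimes_{\bk}K_2$ has nontrivial kernel whenever $\bk\ne\bk^p$, and $(U+\ker)\cap(V+\ker)\supsetneq (U\cap V)+\ker$ in general, so the $\bk^p$-statement does not push forward to the intersection you need. A correct argument has to exploit more structure --- e.g.\ the freeness of $K_1\otimes_{\bk}K_2$ over $K_1^p\otimes_{\bk}K_2^p$ with basis $\{\epsilon_i\delta_j\}$ (established in the previous lemma's proof) together with the fact that $B_1$ and $B_2$ are genuine $\bk$-subspaces --- rather than the bare ``intersection of tensor products of subspaces'' identity over a single field.
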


\begin{proof}
Applying the previous lemma with $\{\delta_j\}_{j \in J}$ the singleton set $\{1\}$, we find
\begin{displaymath}
(K_1 \otimes K_2) \cap \sum_{j=1}^s \epsilon_j L^p = \left( \sum_{j=1}^s \epsilon_j K_1^p \right) \otimes K_2^p.
\end{displaymath}
Now intersect each side with $C$. Note that $C$ is contained in $K_1 \otimes K_2$, so the left side is just $C \cap \sum_{j=1}^s \epsilon_j L^p$. Since $C=B_1 \otimes B_2$, the right side factors as required.
\end{proof}

Recall that if $\{A_i\}_{i \in \cI}$ is a family of $\bk$-algebras then $\bigotimes_{i \in \cI} A_i$ is defined as the direct limit of the algebras $\bigotimes_{i \in \cJ} A_i$ taken over all finite subsets $\cJ$ of $\cI$.

\begin{proposition} \label{prop:tensor-C}
Let $\bk$ be a field of characteristic~$p$, let $\{A_i\}_{i \in \cI}$ be a family of $\bk$-algebras, and let $A=\bigotimes_{i \in \cI} A_i$, which we assume to be a domain. Assume:
\begin{enumerate}[\rm \indent (a)]
\item For each finite subset $\cJ$ of $\cI$, the ring $A_{\cJ} = \bigotimes_{i \in \cJ} A_i$ satisfies condition $(C)$.
\item For each $i$, we have $A_i \cap \Frac(A_i)^p = A_i^p$.
\end{enumerate}
Then $A$ satisfies condition $(C)$.
\end{proposition}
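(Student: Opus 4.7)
The plan is to reduce condition $(C)$ for $A$ to condition $(C)$ for a finite tensor subproduct containing the coefficients $\epsilon_j$, using the two lemmas just proved to strip off the ``irrelevant'' factors. First I would fix $\epsilon_1,\dots,\epsilon_s\in K=\Frac(A)$, then choose a finite subset $\cJ_0\subset \cI$ large enough that each $\epsilon_j$ lies in $K_0:=\Frac(A_{\cJ_0})$. Writing $B_1=A_{\cJ_0}$ and $B_2=\bigotimes_{i\in \cI\setminus \cJ_0} A_i$, we have $A=B_1\otimes_\bk B_2$, and $B_2$ is a domain since it is a subring of the domain $A$. Setting $K_2=\Frac(B_2)$ and applying the preceding lemma (with $L=K$) gives
\[
A\cap \sum_{j=1}^s \epsilon_j K^p \;=\; \Bigl(B_1\cap \sum_{j=1}^s \epsilon_j K_0^p\Bigr)\otimes \bigl(B_2\cap K_2^p\bigr).
\]

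The first auxiliary point is to show that $B_2\cap K_2^p=B_2^p$. For any finite $\cK\subset \cI\setminus \cJ_0$, set $B_\cK=\bigotimes_{i\in \cK}A_i$; by induction on $|\cK|$, applying the preceding lemma with $s=1$ and $\epsilon_1=1$ and using hypothesis (b) at each step, I get $B_\cK\cap \Frac(B_\cK)^p=B_\cK^p$. For the passage to $B_2$, any $x\in B_2\cap K_2^p$ is $x=y^p$ with $y=u/v$ for some $u,v\in B_2$; choosing a finite $\cK$ containing $u,v$ and $x$ gives $y\in \Frac(B_\cK)$, so $x\in B_\cK\cap \Frac(B_\cK)^p=B_\cK^p\subset B_2^p$, and the reverse inclusion is automatic.

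The second auxiliary point is to identify $B_1^p\otimes B_2^p=A^p$ as subsets of $A$. Since $A$ has characteristic $p$, the Frobenius satisfies $\bigl(\sum_k u_k\otimes v_k\bigr)^p=\sum_k u_k^p\otimes v_k^p$, which gives both inclusions at the level of additive subgroups of $A$. Combined with condition $(C)$ for $A_{\cJ_0}$ from hypothesis (a), which supplies a non-zero $b\in B_1\subset A$ with $B_1\cap \sum_j \epsilon_j K_0^p\subset b^{-p}\sum_j \epsilon_j B_1^p$, I conclude
\[
A\cap \sum_{j=1}^s \epsilon_j K^p \;\subset\; \Bigl(b^{-p}\sum_{j=1}^s \epsilon_j B_1^p\Bigr)\otimes B_2^p \;=\; b^{-p}\sum_{j=1}^s \epsilon_j\bigl(B_1^p\otimes B_2^p\bigr) \;=\; b^{-p}\sum_{j=1}^s \epsilon_j A^p,
\]
which is exactly condition $(C)$ for $A$.

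The main obstacle is the careful bookkeeping around the tensor-product notation in the preceding lemmas: one must verify that the ``tensor factors'' $B_1\cap\sum_j\epsilon_jK_0^p$ and $B_2\cap K_2^p$, when multiplied inside the ambient ring $A$, really reproduce $A\cap \sum_j\epsilon_j K^p$ on the nose, and that the identity $B_1^p\otimes B_2^p=A^p$ holds as \emph{subsets} of $A$ (not merely after a scalar extension). Once these identifications are made, the proof is essentially the reduction scheme above, with hypothesis (a) providing the uniform bound $b$ in the finite piece and hypothesis (b), propagated through a direct-limit argument, supplying purity in the complementary infinite piece.
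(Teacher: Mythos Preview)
Your argument is correct and uses the same two lemmas and the same hypotheses as the paper, but the organization differs. The paper never applies the second lemma to an infinite tensor factor: after choosing $\cJ_0$ and the bound $b\in A_{\cJ_0}$, it proves the containment
\[
A_{\cJ}\cap\sum_{j=1}^s\epsilon_j K_{\cJ}^p\subset b^{-p}\sum_{j=1}^s\epsilon_j A_{\cJ}^p
\]
for every finite $\cJ\supseteq\cJ_0$ by induction on $\cJ$, adding one index at a time and invoking hypothesis~(b) for the new $A_i$; the statement for $A$ then follows by passing to the direct limit over $\cJ$. You instead split $A=B_1\otimes B_2$ once, apply the lemma in a single shot, and push the inductive work into the auxiliary claim $B_2\cap K_2^p=B_2^p$. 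Your route is slightly cleaner conceptually, at the cost of the extra bookkeeping you flag: one must check that the lemma applies with the possibly infinite factor $B_2$ (it does, since $A=B_1\otimes_\bk B_2$ is a domain by hypothesis), and that $B_1^p\otimes B_2^p=A^p$ as subsets of $A$ (which follows, as you note, from additivity of Frobenius). The paper's finite-step induction sidesteps both of these checks.
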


\begin{proof}
Let $K=\Frac(A)$, $K_i=\Frac(A_i)$, and $K_{\cJ}=\Frac(A_{\cJ})$. Note that $A_i$ and $A_{\cJ}$ are subrings of $A$, and thus domains. Now, let $\epsilon_1, \ldots, \epsilon_s \in K$ be given. Let $\cJ_0 \subset \cI$ be a sufficiently large finite set such that $\epsilon_1, \ldots, \epsilon_s$ belong to $K_{\cJ_0}$. Since $A_{\cJ_0}$ satisfies condition~$(C)$ by assumption~(a), there exists a non-zero element $b \in A_{\cJ_0}$ such that
\begin{displaymath}
A_{\cJ_0} \cap \sum_{j=1}^s \epsilon_j K_{\cJ_0}^p \subset b^{-p} \sum_{j=1}^s \epsilon_j A_{\cJ_0}^p.
\end{displaymath}
We claim that
\begin{displaymath}
A \cap \sum_{j=1}^s \epsilon_j K^p \subset b^{-p} \sum_{j=1}^s \epsilon_j A^p,
\end{displaymath}
which will verify condition $(C)$ for $A$. To see this, it suffices to prove that
\begin{displaymath}
A_{\cJ} \cap \sum_{j=1}^s \epsilon_j K_{\cJ}^p \subset b^{-p} \sum_{j=1}^s \epsilon_j A_{\cJ}^p,
\end{displaymath}
holds for all finite subsets $\cJ$ of $\cI$ containing $\cJ_0$. Let $I(\cJ)$ be the truth value of this containment. We prove $I(\cJ)$ by induction on $\cJ$, the base case being $\cJ=\cJ_0$. Thus suppose that $I(\cJ)$ holds, and let us prove $I(\cJ')$ where $\cJ'=\cJ \cup \{i\}$ for some $i \in \cI \setminus \cJ$. As $A_{\cJ'}=A_{\cJ} \otimes A_i$, the previous lemma yields
\begin{displaymath}
A_{\cJ'} \cap \left( \sum_{j=1}^s \epsilon_j K_{\cJ'}^p \right) = \left( A_{\cJ} \cap \sum_{j=1}^s \epsilon_j K_{\cJ}^p \right) \otimes \left( A_i \cap K_i^p \right).
\end{displaymath}
The first factor on the right is contained in $b^{-p} \sum_{j=1}^s \epsilon_j A^p_{\cJ}$ by $I(\cJ)$, while the second factor on the right is $A_i^p$, by assumption~(b); thus the right side is contained in $b^{-p} \sum_{j=1}^s \epsilon_j A^p_{\cJ'}$, and so $I(\cJ')$ holds.
\end{proof}

\begin{corollary} \label{cor:normal-tensor}
Let $\bk$ be a semi-perfect field, let $\{A_i\}_{i \in \cI}$ be a family of normal finitely generated $\bk$-algebras, and let $A=\bigotimes_{i \in \cI} A_i$, which we assume to be a domain. Then $A$ satisfies condition $(C)$.
\end{corollary}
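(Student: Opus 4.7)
The plan is to invoke Proposition~\ref{prop:tensor-C}, reducing the corollary to verifying its two hypotheses~(a) and~(b) for the given data.

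Hypothesis~(b), namely $A_i \cap \Frac(A_i)^p = A_i^p$, is immediate from normality: if $x \in A_i$ equals $y^p$ for some $y \in \Frac(A_i)$, then $y$ is a root of $T^p - x \in A_i[T]$, hence integral over $A_i$, and normality forces $y \in A_i$, so $x \in A_i^p$. This disposes of (b) with no further input beyond the normality assumption.

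For hypothesis~(a), I need to show that each finite tensor product $A_{\cJ} = \bigotimes_{i \in \cJ} A_i$ satisfies condition $(C)$. First I would check that $A_{\cJ}$ is a noetherian domain: as a finite tensor product of finitely generated $\bk$-algebras it is itself finitely generated over $\bk$, hence noetherian; and it embeds in $A$ (tensor products over a field inject into their directed colimits via a $\bk$-basis argument), so it is a subring of the domain $A$, hence a domain. I would then apply Proposition~\ref{prop:easyC}, which additionally demands that $A_{\cJ}$ be finite over $A_{\cJ}^p$. This is the only place where semi-perfection enters: picking $\bk$-algebra generators $y_1, \ldots, y_n$ of $A_{\cJ}$ and a (finite) $\bk^p$-basis $\lambda_1, \ldots, \lambda_r$ of $\bk$, the elements $\lambda_a y^e$ with $1 \le a \le r$ and $e \in \{0, 1, \ldots, p-1\}^n$ visibly span $A_{\cJ}$ over $A_{\cJ}^p$, since $y_k^p \in A_{\cJ}^p$ reduces high powers and the $\lambda_a$ absorb $\bk$ modulo $\bk^p$.

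With (a) and (b) in hand, Proposition~\ref{prop:tensor-C} delivers condition $(C)$ for $A$. The main (indeed only) conceptual point is the module-finiteness of $A_{\cJ}$ over $A_{\cJ}^p$ in~(a); everything else is formal, and semi-perfection is used precisely to make that finiteness hold.
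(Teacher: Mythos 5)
Your proof is correct and follows exactly the same route as the paper: invoke Proposition~\ref{prop:tensor-C}, verify hypothesis~(a) for each finite tensor factor via Proposition~\ref{prop:easyC} (which is where semi-perfection and finite generation are used to get module-finiteness over $p$th powers), and verify hypothesis~(b) from normality since $y^p = x \in A_i$ makes $y$ integral over $A_i$. The only difference is that you spell out the finiteness of $A_{\cJ}$ over $A_{\cJ}^p$, which the paper leaves implicit; your argument for it is correct.
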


\begin{proof}
We apply Proposition~\ref{prop:tensor-C}. Since $A_{\cJ}$ is finitely generated over a semi-perfect field, it satisfies $(C)$ by Proposition~\ref{prop:easyC}, and so condition~(a) holds. Since each $A_i$ is normal, condition~(b) holds.
\end{proof}

\begin{corollary} \label{cor:poly-C}
Any polynomial ring over a semi-perfect field satisfies condition $(C)$.
\end{corollary}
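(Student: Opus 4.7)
The plan is to realize the polynomial ring $\bk[x_i]_{i \in I}$ as an infinite tensor product of very simple factors and then invoke Corollary~\ref{cor:normal-tensor}. Concretely, for each $i \in I$ let $A_i = \bk[x_i]$, a polynomial ring in one variable. Each $A_i$ is a principal ideal domain, hence a normal finitely generated $\bk$-algebra, and the natural map $\bigotimes_{i \in I, \bk} A_i \to \bk[x_i]_{i \in I}$ is a $\bk$-algebra isomorphism (the universal property of the tensor product identifies the left side with the free commutative $\bk$-algebra on the variables). In particular the tensor product is a domain.

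With this identification in hand, Corollary~\ref{cor:normal-tensor} applies verbatim: $\bk$ is semi-perfect by hypothesis, the $A_i$ are normal finitely generated $\bk$-algebras, and $A = \bigotimes_{i \in I} A_i$ is a domain. The corollary then gives condition $(C)$ for $A = \bk[x_i]_{i \in I}$.

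There is essentially no obstacle here: the identification of the polynomial ring with the tensor product is standard, and the remaining work has already been done in Corollary~\ref{cor:normal-tensor}. The only minor thing worth noting is that in the finite-variable case one could instead cite Proposition~\ref{prop:easyC} directly, since $\bk[x_1,\ldots,x_n]$ is noetherian and finite over its $p$th power subring $\bk^p[x_1^p,\ldots,x_n^p]$ (using $[\bk:\bk^p] < \infty$); the tensor-product approach is what handles arbitrary $I$ uniformly.
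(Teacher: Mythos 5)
Your proof is exactly the paper's: decompose $\bk[x_i]_{i\in I}$ as $\bigotimes_{i\in I}\bk[x_i]$ and apply Corollary~\ref{cor:normal-tensor}, noting each $\bk[x_i]$ is a normal finitely generated $\bk$-algebra. The closing remark about invoking Proposition~\ref{prop:easyC} in the finite-variable case is a correct aside but does not change the argument.
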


\begin{proof}
Suppose $A=\bk[x_i]_{i \in \cI}$ is a polynomial ring. Then $A=\bigotimes_{i \in \cI} A_i$ where $A_i$ is the univariate polynomial ring $\bk[x_i]$. The result now follows from Corollary~\ref{cor:normal-tensor}.
\end{proof}

\begin{corollary} 
Let $A$ be a polynomial ring over a semi-perfect field, let $K=\Frac(A)$ and let $\cU$ be an infinite set. Then $\bR^{\flat}=K \otimes_A A \invl x_i \invr_{i \in \cU}$ is a polynomial $K$-algebra.
\end{corollary}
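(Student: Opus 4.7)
The plan is to combine two results already established in the paper: Corollary~\ref{cor:poly-C}, which asserts that any polynomial ring over a semi-perfect field satisfies condition $(C)$, and Theorem~\ref{thm:inverse2}, which gives polynomiality of $\bR^{\flat}$ under condition $(C_\kappa)$ for $\kappa = \#\cU$.

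Concretely, I would first invoke Corollary~\ref{cor:poly-C} applied to $A$ to conclude that $A$ satisfies condition $(C)$. I would then note the observation made just before Theorem~\ref{thm:inverse2}, namely that $(C)$ is equivalent to the conjunction of $(C_\kappa)$ over all cardinals $\kappa$; in particular it implies $(C_\kappa)$ for $\kappa = \#\cU$. With that hypothesis verified, Theorem~\ref{thm:inverse2} applies directly to yield that $\bR^{\flat} = K \otimes_A A\invl x_i \invr_{i \in \cU}$ is a polynomial $K$-algebra.

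There is no real obstacle: once the preceding infrastructure has been developed (the $F$-factorization on $\bR^{\flat}$, the equivalence of admissibility of $\cD^{\flat}$ with condition $(C_\kappa)$, and the tensor-product analysis of $(C)$ culminating in Corollary~\ref{cor:poly-C}), the statement is a one-line consequence. The only thing to be careful about is matching quantifiers: $(C)$ is stronger than $(C_\kappa)$ for the specific $\kappa = \#\cU$ that Theorem~\ref{thm:inverse2} requires, so no cardinality hypothesis on $\cU$ beyond ``infinite'' is needed. Thus the proof can be written in essentially two sentences.
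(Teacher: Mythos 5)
Your proof is correct and matches the paper's own argument exactly: the paper also deduces the corollary from Corollary~\ref{cor:poly-C} and Theorem~\ref{thm:inverse2}, with the observation that $(C)$ implies $(C_\kappa)$ for all $\kappa$ being the (implicit) bridge.
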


\begin{proof}
This follows from Corollary~\ref{cor:poly-C} and Theorem~\ref{thm:inverse2}.
\end{proof}

Having given some examples where $(C)$ holds, we now give an example where it does not.

\begin{example}
Let $A=\bF_p[t_i^2,t_i^3]_{i \ge 1}$, a subring of the polynomial ring $\bF_p[t_i]_{i \ge 1}$. We claim that $A$ does not satisfy condition $(C)$. Take $s=1$ and $\epsilon_1=1$, and let $S \subset A \cap K^p$ be the set $S=\{t_i^p\}_{i \ge 1}$. We note that $K=\Frac(A)$ is the rational function field $\bF_p(t_i)_{i \ge 1}$, so $t_i^p$ does indeed belong to $K^p$. We claim that $S$ is not contained in $b^{-p} A^p$ for any non-zero $b \in A$. Indeed, $b$ can only use finitely many variables, say $t_1, \ldots, t_n$, and then $b^{-p} A^p$ cannot contain $t_{n+1}^p$, as $b^{-1} A$ does not contain $t_{n+1}$. We do not know if $\bR^{\flat}$ is a polynomial ring in this case: this is an interesting open problem.
\end{example}

\begin{remark}
Broadly speaking, the main difficulty with the derivational criteria like Theorem~\ref{thm:poly} or those in \cite[\S 2]{stillman} lies in dealing with elements that look like $p$th powers but are not. For example, if $K=\bF_p(t_i)_{i \ge 1}$ then the element $f=\sum_{i \ge 1} t_i x_i^p$ of $K\vars$ looks like a $p$th power, in the sense that it is annihilated by all continuous derivations, even though it is not. The criterion in \cite{stillman} is not powerful enough to handle such elements, and thus cannot establish the polynomiality of $K\vars$. Our Theorem~\ref{thm:poly} can handle this kind of element, and does prove that $K\vars$ is polynomial. However, Theorem~\ref{thm:poly} cannot handle certain more subtle elements. For example, if $A=\bF_p[t_i^2,t_i^3]_{i \ge 1}$ is the ring from the previous example and $g \in \bR^{\flat}$ is the element $g=\sum_{i \ge 1} t_i^p x_i^p$ then $g$ \emph{really} looks like a $p$th power---e.g., it is a limit of $p$th powers, and is a $p$th power in the overring $\bR$---even though $g$ is not a $p$th power in $\bR^{\flat}$.  Our Theorem~\ref{thm:poly} is not powerful enough to handle this kind of element, and thus cannot determine whether or not $\bR^{\flat}$ is a polynomial ring.
\end{remark}

\subsection{A curious example}

We now give an example of a ring $A$ for which $(C_{\kappa})$ holds for $\kappa=\aleph_0$, but fails for $\kappa=\aleph_1$. Thus the ring $\bR^{\flat}$ is polynomial if $\cU$ is countable, but our methods cannot determine whether or not it is so for uncountable $\cU$.

Fix a well-ordered set $I$ of type $\omega_1$. Thus $I$ is uncountable (of cardinality $\aleph_1$), but for any $i \in I$ the set $\{j \in I \mid j<i\}$ is countable. In particular, any countable subset of $I$ is bounded from above (if not, $I$ would be a countable union of countably many subsets). Let $\Gamma$ be the group of all functions $I \to \bZ$ with finite support. We totally order $\Gamma$ using the lexicographic order: that is, $\gamma<\gamma'$ if $\gamma(i)<\gamma'(i)$, where $i$ is maximal such that $\gamma(i) \ne \gamma'(i)$. Given a non-zero element $\gamma \in \Gamma$, the {\bf top index} of $\gamma$ is the maximal $i$ for which $\gamma(i) \ne 0$, and the {\bf top value} of $\gamma$ is $\gamma(i)$, where $i$ is the top index. We let $\Gamma_+$ (resp.\ $\Delta$) be the submonoid of $\Gamma$ consisting of~0 and all non-zero elements $\gamma \in \Gamma$ with top value at least~1 (resp.\ at least~2). We note that $\Gamma_+$ can also be described as the set of $\gamma \in \Gamma$ with $\gamma \ge 0$.

Let $L=\bF_p \lpp \Gamma \rpp$ be the ring of Laurent series with coefficients in $\bF_p$ and exponents in $\Gamma$. By definition, an element of $L$ is a formal series $\sum_{\gamma \in \Gamma} c_{\gamma} t^{\gamma}$, where the $t^{\gamma}$'s are formal symbols, the $c_{\gamma}$'s belong to $\bF_p$, and the set $\{ \gamma \in \Gamma \mid c_{\gamma} \ne 0\}$ is well-ordered (under the order on $\Gamma$). Multiplication in $L$ is performed in the usual manner; the condition on the support of the coefficients ensures it is well-defined. For a non-zero element $f=\sum c_{\gamma} t^{\gamma}$ of $L$, we let $v(f)$ be the minimal $\gamma$ for which $c_{\gamma}$ is non-zero. It is well-known that $L$ is a field and $v$ is a valuation on $L$ with value group $\Gamma$. In fact, this is the standard construction of a valuation field with value group $\Gamma$. The valuation ring $\cO_L$ consists of those elements of $L$ that have the form $\sum_{\gamma \in \Gamma_+} c_{\gamma} t^{\gamma}$. See \cite[\S II.3]{fuchs} for further background on these constructions.

Let $A \subset \cO_L$ be the set of elements of the form $\sum_{\gamma \in \Delta} c_{\gamma} t^{\gamma}$. Since $\Delta$ is a submonoid of $\Gamma$, it follows that $A$ is a subring of $L$. Let $K$ be the fraction field of $A$. We note that the valuation of any non-zero element of $A$ belongs to $\Delta$.

\begin{lemma} \label{lem:top-index}
Given $f \in K$ there exists $i \in I$ such that $t^{\delta} f \in A$ for any $\delta \in \Delta$ with top index $>i$.
\end{lemma}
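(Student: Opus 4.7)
The plan is to reduce the lemma to a concrete computation with $1/h$. I would write $f = g/h$ with $g, h \in A$ and $h \ne 0$, and observe that it suffices to show $t^\delta/h \in A$ for all $\delta \in \Delta$ with top index above a certain $i$: for then $t^\delta f = g \cdot (t^\delta/h) \in A$ since $A$ is a ring.

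To analyze $1/h$, I would factor $h = c_\beta t^\beta(1+u)$, where $\beta = v(h) \in \Delta$, $c_\beta \in \bF_p^\times$ is the coefficient of $t^\beta$ in $h$, and $u \in L$ satisfies $v(u) > 0$. The support of $u$ is $\{\beta' - \beta : \beta' \in \mathrm{supp}(h),\ \beta' > \beta\}$. Expanding the Hahn-series geometric series $(1+u)^{-1} = \sum_{k\ge 0}(-u)^k$, every exponent in $\mathrm{supp}(1/h)$ has the form
\[
\gamma = \sum_{l=1}^k \beta_l - (k+1)\beta
\]
for some $k \ge 0$ and some $\beta_l \in \mathrm{supp}(h)$ with $\beta_l > \beta$ (the case $k=0$ giving $\gamma = -\beta$). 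If $\beta = 0$ then $\mathrm{supp}(1/h) \subset \Delta$ since $\Delta$ is a submonoid, so $1/h \in A$ and the lemma is immediate. Otherwise, I would take $i$ to be the top index $j_\beta$ of $\beta$.

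The heart of the argument will then be the claim that for any $\delta \in \Delta$ with top index $j_\delta > j_\beta$ and any $\gamma$ of the above form, one has $\delta + \gamma \in \Delta$. A preliminary observation: since $\beta_l > \beta$ in the lex order, the top index $j_{\beta_l}$ satisfies $j_{\beta_l} \ge j_\beta$, for otherwise $\beta_l(j_\beta) = 0 < \beta(j_\beta)$ would force $\beta_l < \beta$. Set $M = \max(j_\delta, j_{\beta_1}, \dots, j_{\beta_k})$. Then $M \ge j_\delta > j_\beta$, so $\beta(M) = 0$, and
\[
(\delta + \gamma)(M) = \delta(M) + \sum_{l=1}^k \beta_l(M)
\]
is a sum of non-negative integers. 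By the definition of $M$, at least one summand equals the top value of its element, which is at least $2$ ($\delta \in \Delta$ by hypothesis, and each $\beta_l \in \mathrm{supp}(h) \subset \Delta$). Since all constituents vanish strictly above $M$, the top index of $\delta + \gamma$ is exactly $M$ and its top value is $\ge 2$, so $\delta + \gamma \in \Delta$.

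The hard part will be the bookkeeping in this case analysis---specifically, ensuring that no cancellation at $M$ can lower the top index in a bad way, which is ultimately handled by the fact that all contributions to $(\delta+\gamma)(M)$ are non-negative. The only other nontrivial piece is the computation of $\mathrm{supp}(1/h)$ via the geometric series, which follows from general Hahn-series formalism once we know $v(u) > 0$.
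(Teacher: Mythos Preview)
Your proof is correct and follows essentially the same approach as the paper: both reduce to analyzing $h^{-1}$ via the geometric series expansion, take $i$ to be the top index of $\beta = v(h)$, and then verify that every exponent appearing in $t^{\delta} h^{-1}$ lies in $\Delta$. The only organizational difference is that the paper first splits the tail of $h$ as $g_1 + g_2$ according to whether the top index equals $j_\beta$ or exceeds it, handling these pieces separately, whereas you treat each support element $\gamma = \sum_l \beta_l - (k+1)\beta$ uniformly by evaluating at $M = \max(j_\delta, j_{\beta_1}, \dots, j_{\beta_k})$; your version is slightly more direct but the underlying idea is the same.
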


\begin{proof}
It suffices to prove the result for $f=g^{-1}$ with $g \in A$ non-zero. Let $\gamma=v(g)$; we may as well scale $g$ so that $t^{\gamma}$ has coefficient~1. Let $i$ be the top index of $\gamma$. Write $g=t^{\gamma}+g_1+g_2$, where $g_1$ consists of all terms in $g$ with top index $i$ (other than the leading term) and $g_2$ consists of terms with top index $>i$. Then
\begin{displaymath}
g^{-1} = t^{-\gamma}(1+t^{-\gamma} g_1+t^{-\gamma} g_2)^{-1} = t^{-\gamma} \sum_{n,m \ge 0} \binom{n+m}{n} (t^{-\gamma} g_1)^n (t^{-\gamma} g_2)^m.
\end{displaymath}
The terms in $t^{-\gamma} g_2$ have the same top values as the terms in $g_2$; in particular, they are all at least~2, and so $t^{-\gamma} g_2$ belongs to $A$. The terms in $t^{-\gamma} g_1$ have top index at most $i$, and so the same is true for terms in $t^{-\gamma} (t^{-\gamma} g_1)^n$ for any $n$. Thus if $\delta \in \Delta$ has top index $>i$ then all terms in $t^{\delta} \cdot t^{-\gamma} (t^{-\gamma} g_1)^n$ have the same top index and top value as $\delta$, and so this element belongs to $A$. Thus $t^{\delta} g^{-1}$ belongs to $A$.
\end{proof}

\begin{lemma}
$A$ satisfies $(C_{\kappa})$ with $\kappa=\aleph_0$.
\end{lemma}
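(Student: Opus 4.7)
The plan is to prove $(C_{\aleph_0})$ by a uniform-denominator argument using Lemma~\ref{lem:top-index} together with the countable-boundedness property of $I$. Fix $\epsilon_1, \ldots, \epsilon_s \in K$ and a countable subset $S \subset A \cap \sum_{j=1}^s \epsilon_j K^p$. For each $a \in S$, I would begin by choosing witnesses $c_{1,a}, \ldots, c_{s,a} \in K$ so that $a = \sum_{j=1}^s \epsilon_j c_{j,a}^p$. The problem then reduces to producing a single non-zero $b \in A$ such that $bc_{j,a} \in A$ for every pair $(j,a)$: for then $b^p a = \sum_{j=1}^s \epsilon_j (bc_{j,a})^p \in \sum_{j=1}^s \epsilon_j A^p$, giving $a \in b^{-p} \sum_{j=1}^s \epsilon_j A^p$.

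To construct $b$, I would apply Lemma~\ref{lem:top-index} to each $c_{j,a}$, obtaining an index $i_{j,a} \in I$ such that $t^\delta c_{j,a} \in A$ whenever $\delta \in \Delta$ has top index strictly greater than $i_{j,a}$. The family $\{i_{j,a}\}_{j,a}$ is indexed by a subset of $\{1,\ldots,s\} \times S$ and is therefore countable. Here is where the hypothesis on $I$ is essential: since $I$ has order type $\omega_1$, every countable subset is bounded above in $I$, so there exists $i^* \in I$ with $i_{j,a} \le i^*$ for all $(j,a)$. Pick any $i' > i^*$, let $\delta = 2 e_{i'}$, where $e_{i'}$ denotes the characteristic function of $\{i'\}$ in $\Gamma$, and set $b = t^\delta$. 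Then $\delta \in \Delta$ (its top value is $2$), so $b \in A$, and the top index of $\delta$ is $i' > i^* \ge i_{j,a}$ for every $(j,a)$.

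The lemma then guarantees $bc_{j,a} = t^\delta c_{j,a} \in A$ for all $(j,a)$, and the identity in the first paragraph yields $S \subset b^{-p} \sum_{j=1}^s \epsilon_j A^p$, establishing $(C_{\aleph_0})$. The whole argument is essentially formal once Lemma~\ref{lem:top-index} is in hand; the only non-bookkeeping step is the appeal to the structure of $I$. That step is also the main obstruction, in the sense that it is precisely what will block the same strategy from giving $(C_{\aleph_1})$: when $|S| = \aleph_1$, the family $\{i_{j,a}\}$ may be cofinal in $I$ and no such $i^*$ exists, consistent with the paper's assertion that $(C_{\aleph_1})$ fails for this $A$.
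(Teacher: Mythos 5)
Your proof is correct and takes essentially the same approach as the paper: both reduce $(C_{\aleph_0})$ to Lemma~\ref{lem:top-index} plus the fact that every countable subset of $I$ is bounded above. The only difference is that you make explicit the choice of witnesses $c_{j,a}$ to which Lemma~\ref{lem:top-index} is applied, a step the paper leaves implicit.
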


\begin{proof}
Let $\epsilon_1, \ldots, \epsilon_s \in K$ be given, and let $S=\{f_n\}_{n \ge 1}$ be a countable subset of $A \cap \sum_{j=1}^s \epsilon_j K^p$. By Lemma~\ref{lem:top-index}, for each $n \ge 1$ we can choose some $i_n \in I$ such that $t^{p \delta} f_n \in \sum_{j=1}^s \epsilon_j A^p$ for any $\delta \in \Delta$ with top index $>i_n$. Since $\{i_n\}_{n \ge 1}$ is a countable subset of $I$, it is bounded from above; let $i^* \in I$ be an element such that $i_n<i^*$ for all $n$. Then if $\delta \in \Delta$ has top index $i^*$ we have $t^{p \delta} f_n \in \sum_{j=1}^s \epsilon_j A^p$ for all $n$, and so $S \subset b^{-p} \sum_{j=1}^s \epsilon_j A^p$ with $b=t^{\delta} \in A$. Thus $(C_{\kappa})$ holds.
\end{proof}

\begin{lemma}
$A$ does not satisfy $(C_{\kappa})$ with $\kappa=\aleph_1$.
\end{lemma}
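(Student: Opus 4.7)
The plan is to take $s = 1$ and $\epsilon_1 = 1$, and to exhibit an uncountable family $S \subset A \cap K^p$ that is not contained in $b^{-p}A^p$ for any non-zero $b \in A$. The natural candidate is $S = \{f_i\}_{i \in I}$ where $f_i := t^{p\gamma_i}$ and $\gamma_i = \mathbf{1}_i \in \Gamma$ denotes the function supported at $i \in I$ with value $1$. Checking $f_i \in A$ is immediate since $p\gamma_i$ has top value $p \geq 2$ and so lies in $\Delta$; and $f_i = (t^{\gamma_i})^p$ lies in $K^p$ because $t^{\gamma_i} = t^{3\gamma_i}/t^{2\gamma_i}$ is a quotient of elements of $A$. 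Finally $|S| = |I| = \aleph_1$, so $S$ is a legitimate test set for $(C_{\aleph_1})$.

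The heart of the argument is to show that no non-zero $b \in A$ satisfies $f_i \in b^{-p}A^p$ for every $i \in I$. Since the Frobenius is injective on the field $L$, the containment $f_i \in b^{-p}A^p$ is equivalent to $bt^{\gamma_i} \in A$. So suppose $b \in A \setminus \{0\}$, let $\delta^\ast := v(b) \in \Delta$, and let $i^\ast \in I$ be its top index (well defined since $\delta^\ast$ has finite support). For any $i > i^\ast$, a routine valuation computation gives $v(bt^{\gamma_i}) = \delta^\ast + \gamma_i$; this element has top index $i$ and top value $\delta^\ast(i) + 1 = 1$, so it lies in $\Gamma_+ \setminus \Delta$. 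Since every non-zero element of $A$ has valuation in $\Delta$, we conclude $bt^{\gamma_i} \notin A$. Such $i > i^\ast$ exist because $I$ has type $\omega_1$ and is therefore unbounded above any one of its elements.

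All calculations in this outline are routine; the only conceptual point worth flagging is the contrast with the previous lemma. There, countability of $S$ allowed us to find an upper bound $i^\ast \in I$ for the critical indices $\{i_n\}_{n \geq 1}$, using that every countable subset of $I$ is bounded. Here $S$ is indexed by all of $I$ itself, and the cofinality $\omega_1$ of $I$ is precisely what prevents any single element $i^\ast$ from dominating every index arising from $S$. This mismatch is the real obstacle and is exactly why $(C_{\aleph_0})$ holds for $A$ while $(C_{\aleph_1})$ fails.
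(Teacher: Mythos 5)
Your argument is correct and is essentially identical to the paper's: the same test set $S=\{t^{p\delta_i}\}_{i\in I}$, the same valuation computation showing that $v(bt^{\delta_j})$ has top value $1$ once $j$ exceeds the top index of $v(b)$, and hence the same contradiction. The only cosmetic difference is that you invoke injectivity of Frobenius on $L$ to reduce $f_i \in b^{-p}A^p$ to $bt^{\delta_i}\in A$ up front, whereas the paper writes $b^pt^{p\delta_j}=a^p$ and takes valuations directly; both reach the identical conclusion.
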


\begin{proof}
For $i \in I$, let $\delta_i \in \Gamma_+$ be the element that is~1 at $i$ and~0 elsewhere. Then $t^{p\delta_i}$ belongs to $A \cap K^p$ for all $i$: indeed, $n\delta_i$ belongs to $\Delta$ for any $n \ge 2$, and so $t^{p\delta_i}$ belongs to $A$, while $t^{\delta_i}=t^{3\delta_i}/t^{2\delta_i}$ belongs to $K$, and so $t^{p\delta_i}$ belongs to $K^p$. Let $S=\{t^{p\delta_i}\}_{i \in I}$; this is a subset of $A \cap K^p$ of cardinality $\kappa$. To prove the lemma, it suffices to show that $S$ is not contained in $b^{-p} A^p$ for any non-zero $b\in A$. Thus suppose a non-zero $b \in A$ is given. Let $\gamma=v(b)$, let $i$ be the top index of $\gamma$, and let $j>i$. We claim that $t^{p\delta_j}$ does not belong to $b^{-p} A^p$, which will complete the proof. Indeed, suppose that it did, and write $b^p t^{p\delta_j}=a^p$ for some $a \in A$. Taking valuations, we find $\gamma+\delta_j=v(a) \in \Delta$. However, $\gamma+\delta_j$ has top value~1, a contradiction.
\end{proof}

\subsection{Ultraproducts} \label{sec:ultra}

We now treat an ultraproduct version of Theorem~\ref{mainthm}. We refer to \cite[\S 4]{stillman} for the relevant background. Let $\cI$ be a set equipped with a non-principal ultrafilter. We regard elements of the ultrafilter as neighborhoods of some hypothetical point $\ast$. Let $\{\bk_i\}_{i \in \cI}$ be a family of fields, let $R_i = \bk_i[x_1,x_2,\dots]$ with the standard grading, and let $\bS$ be the graded ultraproduct of the $R_i$.

\begin{theorem} \label{thm:ultra}
$\bS$ is a polynomial ring.
\end{theorem}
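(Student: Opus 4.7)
The strategy parallels the proof of Theorem~\ref{thm:inverse}: realize $\bS$ as admitting an $F$-factorization together with an admissible set of Hasse derivations, and then invoke Theorem~\ref{thm:poly}. The characteristic-0 case (when $\bS_0$ has characteristic~0) is handled as in \cite{stillman} via the derivation-based polynomiality criterion; we therefore focus on the case where $\bS_0$ has positive characteristic $p$, and by shrinking $\cI$ within the ultrafilter we may assume every $\bk_i$ has characteristic $p$. Taking graded ultraproducts of the standard $F$-factorizations $(\phi_i, \sigma_i)$ on each $R_i$ (Example~\ref{ex:pfac}) yields endomorphisms $\phi, \sigma$ of $\bS$ with $\phi$ degree preserving and $\sigma$ multiplying degrees by $p$. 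Axioms (F1)--(F3) transfer immediately via {\L}o\'{s}'s theorem. For (F4), reduce to the case where $\epsilon_1, \ldots, \epsilon_s \in \bS_0$ are linearly independent over $\bS_0^p$; since linear independence is a universal first-order condition, {\L}o\'{s}'s theorem supplies representatives $\epsilon_{j,i} \in \bk_i$ that are linearly independent over $\bk_i^p$ on ultrafilter-many $i$, and then Proposition~\ref{prop:standard-fact} applied pointwise, combined with {\L}o\'{s}'s theorem, yields (F4) on $\bS$.

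For the set of Hasse derivations the essential idea is to enlarge the obvious countable family to an \emph{internally} indexed one. Given any sequence $n = (n_i)_{i \in \cI}$ of positive integers, the graded ultraproduct $\partial_n := (\partial_{i, n_i})_{i \in \cI}$ of the Hasse derivatives with respect to $x_{n_i}$ on the respective $R_i$ is a Hasse derivation on $\bS$ of degree $-1$ commuting with $\phi$. Let $\cD$ be the collection of all such $\partial_n$. Conditions (D1) and (D2) are immediate. For (D3), suppose $\partial^{[0]}(f) \in \sum_{k=1}^s \epsilon_k \im(\phi)$ for every $\partial \in \cD$. By {\L}o\'{s}'s theorem applied to a first-order sentence in the language of $R_i$ (augmented by constants and by $\phi_i, \sigma_i$ and the parameterized Hasse derivative) with one universal quantifier over the variable-index sort $\bN$, this translates to: on ultrafilter-many $i$, $\partial_{i,j}^1(f_i) \in \sum_k \epsilon_{k,i} \im(\phi_i)$ for every $j \in \bN$. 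Example~\ref{ex:hasse} then supplies $f_i \in \im(\sigma_i) + \sum_k \epsilon_{k,i} \im(\phi_i)$ on the same ultrafilter set, and a final application of {\L}o\'{s}'s theorem delivers the conclusion on $\bS$. Theorem~\ref{thm:poly} then completes the proof.

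The main obstacle, and the reason for enlarging $\cD$ beyond the countable external family $\{\partial_j\}_{j \in \bN}$, is that a given $f \in \bS$ may involve unboundedly many variables as $i$ varies over $\cI$. Restricting to the external countable family would force one to intersect countably many ultrafilter sets when verifying (D3), and such an intersection need not lie in a general non-principal ultrafilter. Indexing $\cD$ instead by internal sequences $(n_i) \in \prod_{i \in \cI} \bN$ turns the decisive ``for all'' quantifier into a single first-order statement, placing the situation squarely within the scope of {\L}o\'{s}'s theorem.
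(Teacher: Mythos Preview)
Your proof is correct and follows the same strategy as the paper's: take ultraproducts of the standard $F$-factorizations and Hasse-derivative families on each $R_i$, verify the axioms pointwise, and invoke Theorem~\ref{thm:poly}. The paper actually omits the admissibility verification entirely (calling it ``similar'' to its (F4) check), so your explicit use of internally indexed derivations together with {\L}o\'{s}'s theorem in a two-sorted language fills in precisely the details the paper leaves to the reader.
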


\begin{proof}
  If the ultraproduct of the $\bk_i$'s has characteristic~0, this follows from \cite[Theorem~4.7]{stillman} (which also covers some cases in positive characteristic). Thus assume that this ultraproduct has positive characteristic~$p$. Passing to a neighborhood of $\ast$, we can thus assume that each $\bk_i$ has characteristic~$p$. Each $R_i$ then comes with a standard $F$-factorization, and an admissible set of Hasse derivations (the Hasse derivatives with respect to the variables). We claim that the ultraproduct of these structures endow $\bS$ with an $F$-factorization and an admissible set $\cD$.

First we start with the $F$-factorization. The properties (F1), (F2), (F3) are clearly preserved under taking ultraproduct, so we need to check (F4). Pick $\epsilon_1,\dots,\epsilon_s \in \bS_0$. Suppose we have $f \in \im(\sigma) \cap \sum_{j=1}^s \epsilon_j \im (\phi)$. In particular, we can write $f = \sigma(x)$ and $f = \sum_{j=1}^s \epsilon_j \phi(y_j)$ for some elements $x,y_1,\dots,y_s \in \bS$. Each of $f,x,y_j$ can be represented by a sequence of elements in $R_i$, say $f = (f_i)$, $x = (x_i)$ and $y=(y_{j,i})$, and the same equalities will hold in some neighborhood of $*$. Similarly, we can write $\epsilon_j = (\epsilon_{j,i})$. In particular, we can write $f_i = \sum_{j=1}^s \epsilon_{j,i} z_{j,i}^p$ where $z_{j,i} \in R_i$. Then we have $f = \sum_{j=1}^s \epsilon_j z_j$ where $z_j$ is the ultraproduct of the $z_{j,i}$, and this shows that $\im(\sigma) \cap \sum_{j=1}^s \epsilon_j \im(\phi) \subseteq \sum_{j=1}^s \epsilon_j \bS^p$. As in Remark~\ref{rmk:F4}, the other inclusion follows from the other axioms (or could be proven in a similar way). The proof that the ultraproduct of admissible sets of derivations is still admissible is similar, so we omit it.

The result now follows from Theorem~\ref{thm:poly}.
\end{proof}

\subsection{Small subalgebras} \label{ss:small}

We now explain the application of Theorem~\ref{thm:ultra} to small subalgebras. We begin with some definitions:

\begin{definition}
Let $\bk$ be a field. We say that {\bf small subalgebras exist} for $\bk$ if for all integers $r$ and $d$ there exist an integer $s=s(r,d,\bk)$ with the following property: given homogeneous polynomials $f_1, \ldots, f_r$ of degrees $\le d$ in the polynomial ring $\bk[x_1, \ldots, x_n]$, with $n \ge s$, there exists a regular sequence $g_1, \ldots, g_s$ in $\bk[x_1, \ldots, x_n]$ consisting of homogeneous elements such that $f_1, \ldots, f_r \in \bk[g_1, \ldots, g_s]$.
\end{definition}

\begin{definition}
Let $\sK$ be a class of fields. We say that {\bf small subalgebras exist uniformly} for $\sK$ if small subalgebras exist for all $\bk \in \sK$ and the quantity $s(r,d,\bk)$ can be taken to be independent of $\bk$ for $\bk \in \sK$.
\end{definition}

Ananyan--Hochster \cite[Theorem A]{ananyan-hochster} proved that small subalgebras exist uniformly for algebraically closed fields. In \cite{stillman}, we proved the following:

\begin{theorem} \label{thm:uniform}
Let $\sK$ be a class of fields. Suppose that for every countable sequence $\{\bk_i\}_{i \in \cI}$ of $\sK$, the ultraproduct ring $\bS$ (as defined in the previous section) is polynomial. Then small subalgebras exist uniformly for $\sK$.
\end{theorem}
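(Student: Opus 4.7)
The plan is a proof by contradiction using an ultraproduct compactness argument. Assume small subalgebras do not exist uniformly for $\sK$. Then there exist integers $r, d$ and, for each integer $m \ge 1$, a field $\bk_m \in \sK$, an integer $n_m \ge m$, and homogeneous polynomials $f_1^{(m)}, \ldots, f_r^{(m)}$ of degrees $\le d$ in $\bk_m[x_1, \ldots, x_{n_m}]$ such that no homogeneous regular sequence of length $m$ in this polynomial ring has its generated subalgebra containing all the $f_j^{(m)}$. Viewing these as elements of $R_m = \bk_m[x_1, x_2, \ldots]$, I would fix a non-principal ultrafilter on $\bN$, form the graded ultraproduct $\bS$ of the $R_m$, and let $\bar{f}_1, \ldots, \bar{f}_r \in \bS$ be the ultralimits of the tuples. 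By hypothesis $\bS$ is a polynomial $\bk$-algebra, where $\bk$ is the ultraproduct of the $\bk_m$.

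Next I would extract a small polynomial subalgebra of $\bS$ containing $\bar{f}_1, \ldots, \bar{f}_r$. Fix a homogeneous polynomial generating set $\{y_\alpha\}$ of $\bS$ over $\bk$. The images of the $\bar{f}_j$ in $\bS_+/\bS_+^2$ span a finite-dimensional $\bk$-subspace, so after adjusting $\{y_\alpha\}$ we may pick a finite subset $\bar{g}_1, \ldots, \bar{g}_{s_1}$ of degree $\le d$ whose classes account for the indecomposable parts of the $\bar{f}_j$. The differences $\bar{f}_j - \tilde{g}_j$ lie in $\bS_+^2$ and can be written as finite sums of products of homogeneous elements of strictly smaller degrees; recursively applying the same observation to those factors terminates at degree~$1$ and yields a finite set $\bar{g}_1, \ldots, \bar{g}_s \subset \{y_\alpha\}$ of cardinality $s = s(r,d)$ depending only on $r$ and $d$, such that $\bar{f}_j \in \bk[\bar{g}_1, \ldots, \bar{g}_s]$ for all $j$. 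Being part of a polynomial generating set of $\bS$, the $\bar{g}_i$ automatically form a homogeneous regular sequence there.

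Finally, I would descend to the individual $R_m$. Each $\bar{g}_i$ is represented by a homogeneous sequence $(g_{i,m})_m$ of the same degree as $\bar{g}_i$, and each identity $\bar{f}_j = P_j(\bar{g}_1, \ldots, \bar{g}_s)$ with $P_j \in \bk[Y_1, \ldots, Y_s]$ involves only finitely many scalars in $\bk$, so it descends to $f_j^{(m)} = P_j(g_{1,m}, \ldots, g_{s,m})$ for $m$ in a set of the ultrafilter. For any such $m \ge s(r,d)$ for which the $g_{i,m}$ also form a homogeneous regular sequence in $R_m$, this contradicts the failure of the small subalgebra property. The main obstacle is precisely the descent of the \emph{regular sequence} condition: being a regular sequence is not \emph{a priori} a first-order or otherwise finite condition and so need not transfer through a general ultraproduct. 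The cleanest way around this, and what I would spell out in the proof, is to reformulate regularity in the polynomial-ring setting as algebraic independence plus a collective strength bound in the sense of Ananyan--Hochster (or as the non-vanishing of finitely many polynomials of bounded degree in the coefficients), both of which \emph{are} stable under ultraproduct; this is the technical heart of the argument, and is where the polynomiality hypothesis on $\bS$ is used in an essential way.
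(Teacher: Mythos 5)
Your overall plan---proof by contradiction, graded ultraproduct of the $R_m$, extraction of a finite polynomial subalgebra of $\bS$ containing the $\bar f_j$, descent via \L o\'s---is precisely the approach of the paper, which simply defers to the argument of \cite[\S 4.3]{stillman}. You also correctly put your finger on the one genuinely nontrivial step: transferring the \emph{regular sequence} condition from $\bS$ back to the components $R_m$. This does not follow from \L o\'s directly, since regularity is equivalent to vanishing of Koszul homology in all degrees, an infinite conjunction.

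The gap is that you only gesture at a fix, and the two fixes you name do not obviously work as stated. Invoking an Ananyan--Hochster ``strength $\Rightarrow$ regularity'' bound over the arbitrary fields $\bk_m$ risks circularity and also runs into the fact that strength can drop under field extension, so the algebraically-closed version does not simply descend. Your alternative---``non-vanishing of finitely many polynomials of bounded degree in the coefficients''---needs the bound to be independent of the number of variables $n_m$, which goes to infinity; producing such a bound is exactly a Stillman-type regularity statement. In \cite[\S 4.3]{stillman} this is handled by first using the polynomiality of $\bS$ to deduce that bounded-degree ideals in the $R_m$ have uniformly bounded free resolutions, and then observing that exactness of the Koszul complex of $(g_{1,m},\ldots,g_{s,m})$ is therefore detected in a bounded range of degrees, where \L o\'s does apply. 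That chain of implications is the content you would need to supply; as written, the proof is incomplete at exactly this point. A smaller issue: your claim that the number $s$ of chosen generators $\bar g_1,\ldots,\bar g_s$ depends only on $(r,d)$ is not justified---a degree-$\le d$ element of a polynomial ring may involve arbitrarily many polynomial generators. This is harmless, though: the ultrafilter is non-principal, so one may take $m>s$ for whatever finite $s$ emerges, and the contradiction (hence the uniformity in the conclusion) still follows from the compactness shape of the argument.
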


\begin{proof}
The argument is exactly the same as in \cite[\S 4.3]{stillman}: simply replace ``perfect field'' with ``field in $\sK$'' everywhere.
\end{proof}

We used Theorem~\ref{thm:uniform} and our polynomiality results to prove that small subalgebras exist uniformly for perfect fields. The results of \cite{stillman} can also prove existence of small subalgebras for semi-perfect fields, but not uniformity for this class. Using the superior polynomiality result of this paper (Theorem~\ref{thm:ultra}), we obtain the following improvement:

\begin{theorem}
Small subalgebras exist uniformly for all fields.
\end{theorem}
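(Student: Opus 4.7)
The plan is very short: the theorem is essentially a direct combination of Theorem~\ref{thm:uniform} and Theorem~\ref{thm:ultra}. Take $\sK$ to be the class of all fields. To apply Theorem~\ref{thm:uniform} to this $\sK$, I need to verify its hypothesis, namely that for any countable sequence $\{\bk_i\}_{i \in \cI}$ of fields (with $\cI$ endowed with a non-principal ultrafilter), the ultraproduct ring $\bS$ of the polynomial rings $\bk_i[x_1,x_2,\dots]$ is a polynomial ring. But this is precisely the content of Theorem~\ref{thm:ultra}, which places no restriction on the fields $\bk_i$.

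Once this hypothesis is verified, Theorem~\ref{thm:uniform} immediately yields that small subalgebras exist uniformly for the class of all fields, giving the desired conclusion.

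There is no real obstacle here — the hard work has already been done in establishing Theorem~\ref{thm:poly} (the abstract polynomiality criterion via $F$-factorizations and admissible Hasse derivations), whose application to the ultraproduct setting yields Theorem~\ref{thm:ultra}, and in the previously established Theorem~\ref{thm:uniform} which reduces uniform existence of small subalgebras to ultraproduct polynomiality. The only thing to note is that Theorem~\ref{thm:ultra} is genuinely unrestricted (it handles characteristic~$0$ via the older result from \cite{stillman} and positive characteristic via the new $F$-factorization machinery), so the class $\sK$ in the application of Theorem~\ref{thm:uniform} can indeed be taken to be all fields, with no perfectness or semi-perfectness hypothesis.
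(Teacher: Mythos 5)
Your proposal is correct and is exactly the argument the paper intends: apply Theorem~\ref{thm:uniform} with $\sK$ the class of all fields, and verify its hypothesis via Theorem~\ref{thm:ultra}, which indeed holds with no restriction on the $\bk_i$. This matches the paper's own (implicit) proof.
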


\end{document}